\newtheorem{theorem}{Theorem}[section]
\newtheorem{lemma}[theorem]{Lemma}
\newtheorem{proposition}[theorem]{Proposition}
\newtheorem{corollary}[theorem]{Corollary}
\theoremstyle{definition}
\newtheorem{definition}[theorem]{Definition}
\newtheorem{example}[theorem]{Example}
\newtheorem{problem}[theorem]{Problem}
\theoremstyle{remark}
\newtheorem{remark}[theorem]{Remark}
\numberwithin{equation}{section}
\begin{document}

\setcounter{page}{1}

\title[$up$-convergence \MakeLowercase{in} 
LNVL\MakeLowercase{s}]{Unbounded $p$-convergence in lattice-normed vector lattices}

\author[A. Ayd{\i}n, E. Emelyanov, N. Erkur\c{s}un \"Ozcan, \MakeLowercase{and} M. Marabeh]{Abdullah Ayd{\i}n$^{1}$, Eduard Emelyanov$^2$, Nazife Erkur\c{s}un \"Ozcan$^3$, \MakeLowercase{and} Mohammad Marabeh$^4$$^{*}$}

\address{$^{1}$Department of Mathematics, Mu\c{s} Alparslan University, Mu\c{s}, 49250 , Turkey.}
\email{\textcolor[rgb]{0.00,0.00,0.84}{a.aydin@alparslan.edu.tr}}

\address{$^{2}$Department of Mathematics, Middle East Technical University, Ankara, 06800 Turkey.}

\email{\textcolor[rgb]{0.00,0.00,0.84}{eduard@metu.edu.tr}}

\address{$^{3}$Department of Mathematics,  Hacettepe University, Ankara, 06800, Turkey.}
\email{\textcolor[rgb]{0.00,0.00,0.84}{erkursun.ozcan@hacettepe.edu.tr}}

\address{$^{4}$Department of Applied Mathematics, Palestine Technical University-Kadoorie, P.O. Box Tulkarem - Jaffa Street 7, Palestine.}
\email{\textcolor[rgb]{0.00,0.00,0.84}{m.maraabeh@gmail.com}}


\let\thefootnote\relax\footnote{Copyright 2016 by the Tusi Mathematical Research Group.}

\subjclass[2010]{Primary 46A40; Secondary 46E30.}

\keywords{vector lattice, lattice-normed vector lattice, $uo$-convergence, $un$-convergence, mixed-normed space.}

\date{Received: xxxxxx; Revised: yyyyyy; Accepted: zzzzzz.
\newline \indent $^{*}$Corresponding author}

\begin{abstract}
	
A net $x_\alpha$ in a lattice-normed vector lattice $(X,p,E)$ is unbounded $p$-convergent
to $x\in X$ if $p(|x_\alpha-x|\wedge u)\xrightarrow{o} 0$ for every $u\in X_+$. This convergence 
has been investigated recently for $(X,p,E)=(X,\lvert\cdot \rvert,X)$ under the name of 
$uo$-convergence, for $(X,p,E)=(X,\lVert\cdot\rVert,{\mathbb R})$ under the name of $un$-convergence, and also for $(X,p,{\mathbb R}^{X^*})$, where $p(x)[f]:=|f|(|x|)$, under the name $uaw$-convergence. In this paper we study general properties of the unbounded $p$-convergence.	
	


\end{abstract} \maketitle

\section{Introduction and preliminaries}
Lattice-valued norms on vector lattices provide natural and efficient tools in the theory of vector lattices. 
It is enough to mention the theory of lattice-normed vector lattices (see, for example, \cite{KK,K,E}). The main aim of the present paper is to illustrate 
usefulness of lattice-valued norms for investigation of different types of {\em unbounded convergences} in vector lattices, 
which attracted attention of several authors in series of recent papers \cite{GX,G,GTX,GLX,EM,DOT,KMT,AEEM2,Z,DEM-a,Tay1,DEM-b,EV,Tay2,KT,EEG}. 

The {\em $uo$-convergence} was introduced in \cite{N} under the name {\em individual convergence}, and the {\em $un$-convergence} 
was introduced in \cite{T} under the name {\em d-convergence}. We refer the reader for an exposition on $uo$-convergence to \cite{GTX,GX} 
and on $un$-convergence to \cite{DOT} (see also recent paper \cite{KMT}). For applications of $uo$-convergence, we refer to \cite{EM,GTX,GX,GLX,M}. 
Throughout the paper, all vector lattices are assumed to be real and Archime\-dean.

Recall that a net $(x_\alpha)_{\alpha\in A}$ in a vector lattice $X$ is order convergent (or $o$-convergent, for short) to $x\in X$, if there exists another net 
$(y_\beta)_{\beta\in B}$ satisfying $y_\beta \downarrow 0$, and for any $\beta\in B$, there exists $\alpha_\beta\in A$ such that $|x_\alpha-x|\leq y_\beta$ 
for all $\alpha\geq\alpha_\beta$. In this case we write $x_\alpha\xrightarrow{o} x$. In a vector lattice $X$, a net $x_\alpha$ is unbounded order convergent 
(or $uo$-convergent, for short) to $x\in X$ if $|x_\alpha-x|\wedge u\xrightarrow{o}0$ for every $u\in X_+$. In this case we write $x_\alpha\xrightarrow{uo}x$. 
The $uo$-convergence is an abstraction of a.e.-convergence in $L_p$-spaces for $1\leq p<\infty$, \cite{GTX, GX}. In a normed lattice $(X,\left\|\cdot\right\|)$, 
a net $x_\alpha$ is unbounded norm convergent to $x\in X$, written as $x_\alpha\xrightarrow{un}x$, if $\left\||x_\alpha-x|\wedge u\right\|\to 0$ for every $u\in X_+$. 
Clearly, if the norm is order continuous, then $uo$-convergence implies $un$-convergence. 
For a finite measure $\mu$, $un$-convergence of sequences in $L_p(\mu)$,\ $1\leq p<\infty$, is equivalent to convergence in measure, see \cite{DOT,T}. 
Recently, Zabeti \cite{Z} introduced the following notion. A net $x_\alpha$ in a Banach lattice $X$ is said 
to be {\em unbounded absolute weak convergent} (or {\em $uaw$-convergent}, for short) to $x\in X$ if, for each $u\in X_+$, $|x_\alpha-x|\wedge u\to 0$ weakly.  

Let $X$ be a vector space, $E$ be a vector lattice, and $p:X \to E_+$ be a vector norm (i.e. $p(x)=0\Leftrightarrow x=0$, 
$p(\lambda x)=|\lambda|p(x)$ for all $\lambda\in\mathbb{R}$, $x\in X$, and $p(x+y)\leq p(x)+p(y)$ for all $x,y\in X$), then the triple $(X,p,E)$ 
is called a {\em lattice-normed space}, abbreviated as LNS. We say that elements $x$ and $y$ of an LNS $X$ are $p$-disjoint if their lattice norms are disjoint, and 
abbreviate this by $x{\perp_{\mathbf p}} y$. The lattice norm $p$ in an LNS $(X,p,E)$ is said to be {\em decomposable} if, for all $x\in X$ and
$e_1,e_2\in E_+$, from $p(x)=e_1+e_2$ it follows that there exist $x_1,x_2\in X$ such that $x=x_1+x_2$ and $p(x_k)=e_k$ for $k=1,2$. 
We abbreviate the convergence $p(x_{\alpha}-x)\xrightarrow{o}0$ as $x_\alpha\xrightarrow{p}x$ and say in this case that $x_\alpha$ $p$-converges to $x$.  
We refer the reader for more information on LNSs to \cite{KK,K}. 

If, in addition, $X$ is a vector lattice and the vector norm $p$ is monotone (i.e. $|x|\leq |y|\Rightarrow p(x)\leq p(y)$), then the 
triple $(X,p,E)$ is called {\em lattice-normed vector lattice}, abbreviated as LNVL. In an LNVL $(X,p,E)$, $p$-disjointness implies disjointness.
Indeed, let $x{\perp_{\mathbf p}} y$ and $0\le u\le|x|\wedge|y|$. Then $p(u)\le p(|x|\wedge |y|)\le p(x)\wedge p(y)=0$ and hence $u=0$. Thus $x\bot y$. 
We shall make difference between two notions of bands in an LNVL $X=(X,p,E)$. More precisely, a subset $B$ of $X$ is called a {\em band} if it is a band in 
the usual sense of the vector lattice $X$. Following to \cite[2.1.2.]{K}, we say that a subset $B$ of $X$ is a {\em $p$-band} if 
$$
B=M^{{\perp_{\mathbf p}}}=\{x\in X:(\forall m\in M) \ x{\perp_{\mathbf p}} m\}
$$
for some nonempty $M\subseteq X$. In general, there are many bands which are not $p$-bands. 
To see this, consider the normed lattice $({\mathbb R}^2,\|\cdot\|,{\mathbb R})$. It has four bands, but only two of them are $p$-bands. 
It is easy to see that any $p$-band is an order ideal. The following example shows that a $p$-band may not be a band in general.

\begin{example}\label{ExLNVL_1}
	Consider the LNVL $(c,p,c)$ with 
	$$
	p(x):=|x|+(\lim_{n\to\infty}|x_n|)\cdot \textbf{1}  \ \ \ \ (x=(x_n)_n \in c), 
	$$
	where $\textbf{1}$ denotes the sequence identically equal to 1.
	Take $M=\{e_1\}$. Then the $p$-band $M^{{\perp_{\mathbf p}}}=\{x\in c_0: x_1=0\}$ is not a band.
\end{example}

In Proposition \ref{bands}, we show that, under some mild conditions, every $p$-band is a band. 
Unless otherwise stated, we do not assume lattice norms to be decomposable. 
While dealing with LNVLs, we shall keep in mind also the following examples.

\begin{example}\label{ExLNVL_2}
	Let $X$ be a normed lattice with a norm $\lVert\cdot\rVert$. Then $X$ is the LNVL $(X,\lVert\cdot\rVert,{\mathbb R})$. 
\end{example}

\begin{example}\label{ExLNVL_3}
	Let $X$ be a vector lattice. Then $X$ is the LNVL $(X,\lvert\cdot\rvert,X)$. 
\end{example}

\begin{example}\label{ExLNVL_4}
	Let $X=(X,\lVert\cdot\rVert)$ be a normed lattice. Consider the closed unit ball $B_{X^*}$ of the dual Banach lattice $X^*$.
	Let $E=\ell^{\infty}(B_{X^*})$ be the vector lattice of all bounded real-valued functions on $B_{X^*}$. Define an $E$-valued norm $p$ 
	on $X$ by
	$$
	p(x)[f]:=|f|(|x|) \ \ \ (f\in B_{X^*})
	$$
	for any $x\in X$. The Hahn-Banach theorem ensures that $p(x)=0$ iff $x=0$. All other properties of lattice norm are obvious for $p$. 
	Thus $(X,p,E)$ is an LNVL. Notice also that the lattice norm $p$ takes values in the space $C(B_{X^*})$ of all continuous functions 
	on the $w^*$-compact ball $B_{X^*}$ of $X^*$. Hence, instead of $(X,p,\ell^{\infty}(B_{X^*}))$, one may also consider the LNVL $(X,p,C(B_{X^*}))$. 
\end{example}

\begin{example}\label{ExLNVL_5}
	Let $X$ be a vector lattice, $X^\#$ be the algebraic dual of $X$, and $Y$ be a sublattice of $X^\#$ such that $\left\langle X,Y\right\rangle$ is a dual system. 
	Define $p:X\to {\mathbb R}^Y$ by $p(x)[f]:=|f|(|x|)$. Then $(X,p,{\mathbb R}^Y)$ is an LNVL.
\end{example}

The LNVLs in Examples \ref{ExLNVL_1}, \ref{ExLNVL_2}, and \ref{ExLNVL_3} have decomposable norms. 
It can be shown easily that in Examples \ref{ExLNVL_4} and \ref{ExLNVL_5} the lattice norms are decomposable iff $\dim(X)=1$. 

We refer the reader for further examples of LNSs to \cite{K}. It should be noticed that the theory of lattice-normed spaces 
is well developed in the case of decomposable lattice norms (cf. \cite{KK,K}).
In general, we do not assume lattice norms to be decomposable. 

The structure of the paper is as follows. In Section 2, we study several notions related to LNVLs in parallel to the theory of Banach lattices. In particular, an LNVL $(X,p,E)$ is said to be: {\em $op$-continuous} 
if $X\ni x_\alpha\xrightarrow{o}0$ implies $x_\alpha\xrightarrow{p}0$; a {\em $p$-KB-space} if, for any $0\leq x_\alpha\uparrow$ with $p(x_\alpha)\leq e\in E$, there exists $x\in X$ satisfying 
$x_\alpha\xrightarrow{p}x$. We give a characterization of $op$-continuity in Theorem \ref{op-contchar}, and study some properties of $p$-KB-spaces, e.g. 
in Proposition \ref{$o$-closed sublattice of $p$-KB} and in Proposition \ref{KB}. A vector $e\in X$ is called a {\em $p$-unit} if, for any $x\in X_+$, $p(x-ne\wedge x)\xrightarrow{o}0$. 
Any $p$-unit is a weak unit, whereas strong units are $p$-units. For a normed lattice $(X,\lVert\cdot\rVert)$, a vector in $X$ is a $p$-unit in $(X,\lVert\cdot\rVert,\mathbb R)$ iff it is a quasi-interior point of the normed lattice $(X,\lVert\cdot\rVert)$. 

In Section 3, some basic theory of {\em unbounded $p$-convergence} in LNVLs is developed in parallel to $uo$- and $un$-convergences. For example, it is enough to check out the $uo$-convergence at a weak unit, 
while the $un$-convergence needs to be checked only at a quasi-interior point. Similarly, in LNVLs, {\em $up$-convergence} needs to be examined at a $p$-unit by Theorem \ref{$up$-conv by $p$-unit}. 

In Section 4, we introduce and study {\em $up$-regular} sublattices. Majorizing sublattices and projection bands are examples of $up$-regular sublattices 
by Theorem \ref{$up$-regular}. Also some further investigation of up-regular sublattices is carried out in certain LNVLs in this section.

In the last section, we study properties of mixed-normed LNVLs in Proposition \ref{mixed}, in Theorem \ref{pKB-pKB}, and in Theorem \ref{up-complete}. 
We also prove that in a certain LNVL, the $up$-null nets are ``$p$-almost disjoint'' (see Theorem \ref{$p$-version of Theorem 3.2 in DOT}). Those results 
generalize correspondent results from \cite{GTX,DOT}.

We refer the reader for unexplained notions and terminology to \cite{AB,K,LZ1}. 

\section{$p$-Notions in lattice-normed vector lattices}

Most of notions and results of this preliminary section are direct analogies of well-known facts of the theory of normed lattices.
We include them for convenience of the reader. They are also of certain proper interest and some of them will be used in further sections. 
In the present section, we define and study certain necessary notions such as: $op$-continuity of LNVLs, $p$-KB-spaces, $p$-Fatou spaces, $p$-units, etc. 
In particular, we characterize the $op$-continuity, prove some properties of $p$-KB-spaces, discuss $p$-dense subsets, and study $p$-units in LNVLs. 

\subsection{$p$-Continuity of lattice operations in LNVLs.}
The lattice operations in an LNVL $X$ are $p$-continuous in the following sense.

\begin{lemma}\label{LO are $p$-continuous}
	Let $(x_\alpha)_{\alpha \in A}$ and $(y_\beta)_{\beta \in B}$ be two nets in an LNVL $(X,p,E)$. 
	If $x_\alpha\xrightarrow{p}x$ and $y_\beta\xrightarrow{p}y$, then $(x_\alpha\vee y_\beta)_{(\alpha,\beta)\in A\times B} \xrightarrow{p} x\vee y$.
	In particular, $x_\alpha\xrightarrow{p}x$ implies that $x_\alpha^-\xrightarrow{p} x^-$.
\end{lemma}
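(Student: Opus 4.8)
The plan is to reduce the claim about $x_\alpha\vee y_\beta$ to the triangle inequality for the vector norm $p$ together with the classical Birkhoff-type inequality $\bigl||a\vee b|-|c\vee d|\bigr|\le|a-c|+|b-d|$ in a vector lattice. First I would recall that, by definition, $x_\alpha\xrightarrow{p}x$ means $p(x_\alpha-x)\xrightarrow{o}0$ and $y_\beta\xrightarrow{p}y$ means $p(y_\beta-y)\xrightarrow{o}0$ in $E$. The key pointwise estimate is
$$
|x_\alpha\vee y_\beta-x\vee y|\le|x_\alpha-x|+|y_\beta-y|,
$$
which is the standard lattice inequality $|a\vee b-c\vee d|\le|a-c|+|b-d|$ applied with $a=x_\alpha$, $b=y_\beta$, $c=x$, $d=y$. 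Since $p$ is monotone and subadditive, applying $p$ to both sides gives
$$
p(x_\alpha\vee y_\beta-x\vee y)\le p\bigl(|x_\alpha-x|+|y_\beta-y|\bigr)\le p(x_\alpha-x)+p(y_\beta-y).
$$

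Next I would handle the order convergence of the right-hand side. Since $p(x_\alpha-x)\xrightarrow{o}0$, there is a net $u_{\beta'}\downarrow 0$ in $E$ dominating $p(x_\alpha-x)$ eventually; similarly $p(y_\beta-y)\xrightarrow{o}0$ is dominated eventually by some $v_{\gamma'}\downarrow 0$. Then $u_{\beta'}+v_{\gamma'}\downarrow 0$ (the sum of two nets decreasing to $0$, indexed by the product set, still decreases to $0$), and for $(\alpha,\beta)$ large enough the displayed inequality yields
$$
p(x_\alpha\vee y_\beta-x\vee y)\le u_{\beta'}+v_{\gamma'}.
$$
Hence $p(x_\alpha\vee y_\beta-x\vee y)\xrightarrow{o}0$ along the product net $(\alpha,\beta)\in A\times B$, i.e. $x_\alpha\vee y_\beta\xrightarrow{p}x\vee y$. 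This proves the main assertion.

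For the ``in particular'' clause, I would take $y_\beta=0$ for all $\beta$ (a constant net, which trivially $p$-converges to $0$) and $x$ replaced by $-x_\alpha$, $-x$: applying what was just proved to the nets $(-x_\alpha)$ and $(0)$ gives $(-x_\alpha)\vee 0\xrightarrow{p}(-x)\vee 0$, that is $x_\alpha^-\xrightarrow{p}x^-$. Alternatively one can directly use $|x_\alpha^--x^-|\le|x_\alpha-x|$ and monotonicity of $p$. I do not anticipate a serious obstacle here; the only point requiring a little care is the bookkeeping of the dominating nets on the product index set $A\times B$, which is the standard argument that the sum of two nets order-converging to $0$ again order-converges to $0$ when reindexed over the product.
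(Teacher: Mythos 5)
Your proposal is correct and follows essentially the same route as the paper: the paper obtains your key estimate $|x_\alpha\vee y_\beta-x\vee y|\le|x_\alpha-x|+|y_\beta-y|$ by inserting the intermediate term $x_\alpha\vee y$ and using $|a\vee b-a\vee c|\le|b-c|$ twice, and then runs exactly your dominating-nets argument on the product index set. The only cosmetic slip is the stray absolute-value bars in your first display of the Birkhoff-type inequality; the version you actually apply is the correct one.
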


However this result seems to be well-known, we did not find appropriate references for it and therefore, its elementary proof is included for the reader's convenience.

\begin{proof}
	There exist two nets $(z_\gamma)_{\gamma\in\Gamma}$ and $(w_\lambda)_{\lambda\in\Lambda}$ in $E$ satisfying $z_\gamma\downarrow 0$ 
	and $w_\lambda\downarrow 0$, and for all $(\gamma,\lambda)\in\Gamma\times\Lambda$ there are $\alpha_\gamma\in A$ and $\beta_\lambda\in B$ 
	such that $p(x_\alpha-x)\leq z_\gamma$ and $p(y_\beta-y)\leq w_\lambda$ for all $(\alpha,\beta)\geq(\alpha_\gamma,\beta_\lambda)$. 
	It follows from the inequality $|a\vee b-a\vee c|\leq |b-c|$ that 	
	\begin{equation*}
	\begin{split}
	p(x_\alpha \vee y_\beta - x\vee y)&=p(|x_\alpha \vee y_\beta -x_\alpha \vee y+x_\alpha \vee y- x\vee y|)\\ 
	&\leq p(|x_\alpha \vee y_\beta -x_\alpha \vee y|)+p(|x_\alpha \vee y- x\vee y|)\\ 
	&\leq p(|y_\beta -y|)+p(|x_\alpha-x|)\leq w_\lambda+z_\gamma
	\end{split}
	\end{equation*}
	for all $\alpha\geq\alpha_\gamma$ and $\beta\geq\beta_\lambda$. 
	Since $(w_\lambda+z_\gamma)\downarrow 0$, then $p(x_\alpha\vee y_\beta-x\vee y)\xrightarrow{o}0$. 
\end{proof}

\begin{definition}
	Let $(X,p,E)$ be an LNVL and $Y\subseteq X$. Then $Y$ is called {\em $p$-closed} in $X$ if, for any net $y_\alpha$ in $Y$ that is $p$-convergent to $x\in X$, it holds that $x\in Y$. 
\end{definition}

\begin{remark}\label{p-closedness} \
	\begin{enumerate}
		\item
		Every band is $p$-closed. Indeed, given a band $B$ in an LNVL $(X,p,E)$. If $B\ni x_\alpha\xrightarrow{p}x$, then, by Lemma 
		\ref{LO are $p$-continuous}, $|x_\alpha|\wedge |y|\xrightarrow{p} |x|\wedge|y|$ for any $y\in B^\perp$. Since $|x_\alpha|\wedge|y|=0$ for all $\alpha$, 
		then $|x|\wedge|y| =0$, and so $x\in B^{\perp\perp}=B$. 
		
		\item 
		Every $p$-band is $p$-closed. Indeed, let $B=M^{{\perp_{\mathbf p}}}$ for some nonempty $M\subseteq X$ and 
		$B\ni x_\alpha\xrightarrow{p} x_0\in X$. Take any $m\in M$. It follows from
		$$
		p(x_0)\wedge p(m)\le(p(x_0-x_\alpha)+p(x_\alpha))\wedge p(m)\le 
		$$ 
		$$
		p(x_0-x_\alpha)\wedge p(m)+p(x_\alpha)\wedge p(m)=p(x_0-x_\alpha)\wedge p(m)\xrightarrow{o}0,
		$$
		that $p(x_0)\wedge p(m)=0$. Since $m\in M$ is arbitrary, then $x_0\in B$.
	\end{enumerate}
\end{remark}

The following well-known property is a direct consequence of Lemma \ref{LO are $p$-continuous}.

\begin{proposition}\label{pos. cone is $p$-closed}
	The positive cone $X_+$ in any LNVL $X$ is $p$-closed.
\end{proposition}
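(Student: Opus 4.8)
The plan is to deduce this directly from Lemma~\ref{LO are $p$-continuous}, following the same template used in Remark~\ref{p-closedness}(1). Suppose $X_+ \ni x_\alpha \xrightarrow{p} x$ for some net $(x_\alpha)$ and some $x \in X$; I must show $x \in X_+$, i.e. $x^- = 0$. Since $x_\alpha \geq 0$, we have $x_\alpha^- = 0$ for every $\alpha$. By the last assertion of Lemma~\ref{LO are $p$-continuous}, $p$-convergence is preserved under the negative-part operation, so $x_\alpha^- \xrightarrow{p} x^-$. But $x_\alpha^- = 0$ for all $\alpha$ means $p(x_\alpha^- - x^-) = p(x^-)$ is a constant net, and a constant net can $o$-converge only to that constant; hence $p(x^-) = 0$. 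Since $p$ is a vector norm this forces $x^- = 0$, i.e. $x \in X_+$.

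The only point that needs a word of care is the elementary fact that if a constant net $p(x^-)$ is $o$-convergent to $0$ in the Archimedean vector lattice $E$, then $p(x^-) = 0$: indeed there is a net $z_\gamma \downarrow 0$ in $E$ with $p(x^-) = |p(x^-) - 0| \leq z_\gamma$ eventually, hence for all large $\gamma$, whence $0 \leq p(x^-) \leq \inf_\gamma z_\gamma = 0$. No step here is a genuine obstacle; the whole statement is a one-line corollary of Lemma~\ref{LO are $p$-continuous} together with the definiteness of the vector norm $p$, which is exactly why the paper labels it ``a direct consequence.''
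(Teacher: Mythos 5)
Your proof is correct and follows exactly the route the paper intends: the last assertion of Lemma~\ref{LO are $p$-continuous} (preservation of the negative part under $p$-convergence) combined with the definiteness of $p$, which is why the paper states the proposition as a direct consequence of that lemma without further argument. Nothing to add.
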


Proposition \ref{pos. cone is $p$-closed} implies the following well-known fact.

\begin{proposition}\label{$p$-sup-inf}
	Any monotone $p$-convergent net in an LNVL $o$-converges to its $p$-limit.
\end{proposition}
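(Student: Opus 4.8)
The plan is to reduce the statement to the special case of a net increasing to its $p$-limit, and then to extract from the defining data of $p$-convergence a net in $E$ that simultaneously witnesses order convergence in $X$.

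First I would set up notation: suppose $x_\alpha\uparrow$ and $x_\alpha\xrightarrow{p}x$; the case $x_\alpha\downarrow$ is symmetric (replace $x_\alpha$ by $-x_\alpha$). By Proposition \ref{pos. cone is $p$-closed}, since $x-x_\alpha$ can be compared suitably, I first observe that $x$ is an upper bound for the net: for fixed $\alpha$, the tail $(x_\beta-x_\alpha)_{\beta\geq\alpha}$ lies in $X_+$ and $p$-converges to $x-x_\alpha$, so $x-x_\alpha\in X_+$, i.e. $x_\alpha\leq x$ for all $\alpha$. Thus $|x-x_\alpha|=x-x_\alpha\downarrow$ along the net (it is decreasing in $\alpha$ since $x_\alpha$ is increasing and bounded above by $x$).

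Next, the key step: translate $p(x-x_\alpha)\xrightarrow{o}0$ into an order-null bound for $x-x_\alpha$ itself. By definition of $o$-convergence in $E$, there is a net $(e_\gamma)_{\gamma\in\Gamma}$ with $e_\gamma\downarrow 0$ in $E$ such that for each $\gamma$ there is $\alpha_\gamma$ with $p(x-x_\alpha)\leq e_\gamma$ for all $\alpha\geq\alpha_\gamma$. Now I want a net in $X$ decreasing to $0$ that dominates the tails of $x-x_\alpha$. The natural candidate is $y_\gamma:=x-x_{\alpha_\gamma}$, indexed by $\gamma\in\Gamma$; since $x-x_\alpha$ is decreasing in $\alpha$ and for $\alpha\geq\alpha_\gamma$ we have $0\leq x-x_\alpha\leq x-x_{\alpha_\gamma}=y_\gamma$, this $y_\gamma$ dominates the appropriate tail. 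It remains to check that $\inf_\gamma y_\gamma=0$ in $X$. Suppose $0\leq u\leq x-x_{\alpha_\gamma}$ for all $\gamma$. Then by monotonicity of $p$, $p(u)\leq p(x-x_{\alpha_\gamma})\leq e_\gamma$ for every $\gamma$ (using $p(x-x_{\alpha_\gamma})\leq e_\gamma$, which holds because $\alpha_\gamma\geq\alpha_\gamma$). Since $e_\gamma\downarrow 0$ in $E$, we get $p(u)=0$, hence $u=0$ because $p$ is a vector norm. Therefore $y_\gamma\downarrow 0$.

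Putting this together: the net $(y_\gamma)_{\gamma\in\Gamma}$ satisfies $y_\gamma\downarrow 0$, and for each $\gamma$ the choice $\alpha_\gamma$ gives $|x-x_\alpha|\leq y_\gamma$ for all $\alpha\geq\alpha_\gamma$; this is exactly the definition of $x_\alpha\xrightarrow{o}x$. I expect the only mildly delicate point to be the bookkeeping that $(\alpha_\gamma)_{\gamma}$ need not be monotone in $\gamma$ and $\Gamma$ need not be directed compatibly with $A$ — but this causes no trouble, since the definition of $o$-convergence only requires, for each index of the dominating net, the existence of some tail-threshold in $A$, with no monotonicity of $\gamma\mapsto\alpha_\gamma$ demanded; the decreasing behavior of $x-x_\alpha$ in $\alpha\in A$ is what makes $0\leq x-x_\alpha\leq y_\gamma$ valid on the whole tail $\alpha\geq\alpha_\gamma$. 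So there is essentially no obstacle; the proof is a direct unwinding of definitions once one notices that $x-x_\alpha$ is itself monotone and that monotonicity of $p$ transfers the infimum condition from $E$ back to $X$.
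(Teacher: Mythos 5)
Your first half — reducing to the increasing case and showing $x\geq x_\alpha$ for all $\alpha$ via the $p$-closedness of the positive cone — matches the paper's argument exactly. The second half is where you diverge, and as written it contains a real flaw: the definition of $o$-convergence requires the dominating net to satisfy $y_\gamma\downarrow 0$, that is, to be \emph{decreasing} with infimum zero, and your net $y_\gamma:=x-x_{\alpha_\gamma}$ need not be decreasing in $\gamma$, since the thresholds $\alpha_\gamma$ are chosen with no monotonicity in $\gamma$. Your closing remark conflates two different requirements: it is true that the tail-threshold map $\gamma\mapsto\alpha_\gamma$ is not required to be monotone, but the monotonicity of the dominating net itself \emph{is} part of the definition, and for your particular choice of dominating net the latter reduces to the former. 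So the step ``Therefore $y_\gamma\downarrow 0$'' is unjustified; what you have actually shown is only that $\inf_\gamma y_\gamma=0$.

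The gap is easily repaired from what you already proved. Since $0$ is a lower bound of $\{x-x_\alpha:\alpha\in A\}$ and every lower bound of this family is a lower bound of the subfamily $\{x-x_{\alpha_\gamma}:\gamma\in\Gamma\}$, your computation gives $\inf_{\alpha\in A}(x-x_\alpha)=0$; as $(x-x_\alpha)_{\alpha\in A}$ is decreasing, it satisfies $x-x_\alpha\downarrow 0$ and is itself a legitimate dominating net, which yields $x_\alpha\xrightarrow{o}x$ (equivalently, $x_\alpha\uparrow x$, which is the paper's intermediate conclusion). For comparison, the paper avoids this bookkeeping entirely: instead of unwinding the order convergence of $p(x-x_\alpha)$ in $E$, it applies the $p$-closedness of $X_+$ a second time to an arbitrary upper bound $y$ of the net (so $y-x_\alpha\xrightarrow{p}y-x\in X_+$, hence $y\geq x$), concluding $x_\alpha\uparrow x$ directly. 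Your route does buy something — it uses only the monotonicity of $p$ and never needs to consider other upper bounds — but it must be finished with the correct dominating net.
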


\begin{proof}
	It is enough to show that if $(X,p,E)\ni x_\alpha\uparrow$ and $x_\alpha\xrightarrow{p}x$, then $x_\alpha\uparrow x$.
	
	Fix arbitrary $\alpha$. Then $x_\beta-x_\alpha\in X_+$ for $\beta\ge\alpha$.
	By Proposition \ref{pos. cone is $p$-closed}, $x_\beta-x_\alpha\xrightarrow{p}x-x_\alpha\in X_+$.
	Therefore, $x\geq x_\alpha$ for any $\alpha$. Since $\alpha$ is arbitrary, then $x$ is an upper bound of $x_\alpha$.
	
	If $y\geq x_\alpha$ for all $\alpha$, then, again by Proposition \ref{pos. cone is $p$-closed},
	$y-x_\alpha\xrightarrow{p} y-x\in X_+$, or $y\ge x$. 
	Thus $x_\alpha \uparrow x$.
\end{proof}

\subsection{Several basic $p$-notions in LNVLs}
We continue with several basic notions in LNVLs, which are motivated by their analogies from vector lattice theory.

\begin{definition}\label{$p$-notions}
	Let $X=(X,p,E)$ be an LNVL. Then 
	
	$(i)$ \  a net $(x_\alpha)_{\alpha \in A}$ in $X$ is said to be {\em $p$-Cauchy} if the net $(x_\alpha-x_{\alpha'})_{(\alpha,\alpha') \in A\times A}$ $p$-converges to $0$;
	
	$(ii)$ \ $X$ is called {\em $p$-complete} if every $p$-Cauchy net in $X$ is $p$-convergent;
	
	$(iii)$ a subset $Y\subseteq X$ is called {\em $p$-bounded} if there exists $e\in E$ such that $p(y)\leq e$ for all $y\in Y$;
	
	$(iv)$ $X$ is called {\em $op$-continuous} if $x_\alpha\xrightarrow{o}0$ implies that $p(x_\alpha)\xrightarrow{o}0$;
	
	$(v)$ \ $X$ is called a {\em $p$-KB-space} if every $p$-bounded increasing net in $X_+$ is $p$-convergent;
	
	$(vi)$ $p$ is said to be {\em additive on $X_+$} if $p(x+y)=p(x)+p(y)$ for all $x,y\in X_+$.
	
\end{definition}

\begin{remark}\label{on $p$-notions} \ \            
	\begin{enumerate}
		\item $p$-convergence, a $p$-Cauchy net, $p$-completeness, and $p$-boundedness in LNVLs are also known as {\em $bo$-convergence}, {\em a $bo$-fundamental net}, {\em $bo$-completeness}, 
		and {\em norm-boundedness} respectively $($see, e.g. \cite[p.48]{K}$)$.
		
		\item Clearly, any LNVL $(X,\lvert\cdot\rvert,X)$ is $op$-continuous.		
		
		\item In Definition \ref{$p$-notions}$(v)$ we do not require $p$-completeness of $X$.
		
		\item It is easy to see that a $p$-KB-space $(X,\left\|\cdot\right\|,\mathbb{R})$ is always $p$-complete (see, e.g. \cite[Ex.95.4]{Za}). 
		Therefore, the notion of $p$-KB-space coincides with the notion of $KB$-space. 
		
		\item Clearly, an LNVL $X=(X,\lvert\cdot\rvert,X)$ is a $p$-KB-space iff $X$ is order complete.
		
		\item Notice that, for a $p$-KB-space $X=(X,p,E)$ the vector lattice $p(X)^{\perp\perp}$ need not to be order complete.
		To see this, take a KB-space $(X,\lVert\cdot\rVert)$ and $E=C[0,1]$. Then the LNVL $X=(X,p,E)$ with $p(x):=\|x\|\cdot { \textbf{1}_{[0,1]}}$
		is clearly a $p$-KB-space, yet $p(X)^{\perp\perp}=E$ is not order complete.
	\end{enumerate}
\end{remark}

\begin{lemma}\label{$op$-cont-0}
	For an LNVL $(X,p,E)$, the following statements are equivalent.
	
	$(i)$ $X$ is {\em $op$-continuous};
	
	$(ii)$ $x_\alpha\downarrow 0$ in X implies $p(x_\alpha)\downarrow 0$.	
\end{lemma}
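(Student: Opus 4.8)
The plan is to prove the equivalence of the two characterizations of $op$-continuity by showing that $(ii)$, which at first glance is a formally weaker condition (it only deals with order convergence of decreasing nets to $0$ rather than arbitrary nets), in fact implies $(i)$. The implication $(i)\Rightarrow(ii)$ is immediate: if $x_\alpha\downarrow 0$ then in particular $x_\alpha\xrightarrow{o}0$, so $op$-continuity gives $p(x_\alpha)\xrightarrow{o}0$; but $p(x_\alpha)$ is itself a decreasing net in $E_+$ (by monotonicity of $p$), and a decreasing net that $o$-converges to $0$ must satisfy $p(x_\alpha)\downarrow 0$.

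For the converse $(ii)\Rightarrow(i)$, I would start with a net $x_\alpha\xrightarrow{o}0$ in $X$ and use the definition of order convergence to fix a net $y_\beta\downarrow 0$ in $X$ together with an assignment $\beta\mapsto\alpha_\beta$ such that $|x_\alpha|\le y_\beta$ for all $\alpha\ge\alpha_\beta$. Since $p$ is monotone, $p(x_\alpha)=p(|x_\alpha|)\le p(y_\beta)$ whenever $\alpha\ge\alpha_\beta$. Now apply hypothesis $(ii)$ to the decreasing net $y_\beta\downarrow 0$ to obtain $p(y_\beta)\downarrow 0$ in $E$. Combining these two facts: $p(y_\beta)$ is a decreasing net tending to $0$, and for each $\beta$ the tail $\{p(x_\alpha):\alpha\ge\alpha_\beta\}$ is dominated by $p(y_\beta)$. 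This is precisely the definition of $p(x_\alpha)\xrightarrow{o}0$ in $E$, which is what $op$-continuity requires.

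The step I expect to be the only real (if minor) subtlety is the reduction in $(ii)\Rightarrow(i)$: one must be careful that the definition of order convergence used in the paper employs a \emph{possibly different index set} $B$ for the dominating net $y_\beta$, with a compatibility map $\beta\mapsto\alpha_\beta\in A$, rather than requiring $B=A$. Tracking the quantifiers correctly so that the inequality $p(x_\alpha)\le p(y_\beta)$ holds on the correct tail, and then verifying that this yields order convergence of $p(x_\alpha)$ with the \emph{same} index set $B$ and the \emph{same} map $\beta\mapsto\alpha_\beta$, is the heart of the argument; everything else is a one-line application of monotonicity of $p$ and of the hypothesis. No completeness, decomposability, or Archimedean hypotheses are needed here — only monotonicity of the lattice norm.
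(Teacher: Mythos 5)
Your proposal is correct and follows essentially the same route as the paper: the forward implication by monotonicity of $p$ together with the fact that a decreasing net which $o$-converges to $0$ decreases to $0$, and the converse by dominating the tails of $|x_\alpha|$ with a net $y_\beta\downarrow 0$, applying monotonicity to get $p(x_\alpha)\le p(y_\beta)$ on the corresponding tails, and invoking $(ii)$ to conclude $p(y_\beta)\downarrow 0$ and hence $p(x_\alpha)\xrightarrow{o}0$. The quantifier bookkeeping you flag (separate index set $B$ with the map $\beta\mapsto\alpha_\beta$) is handled identically in the paper's proof, so there is nothing further to add.
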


\begin{proof} 
	The implication $(i)\Rightarrow (ii)$ is trivial.
	
	$(ii)\Rightarrow(i)$: 
	Let $x_\alpha\xrightarrow{o}0$, then there exists a net $z_\beta\downarrow 0$ in $X$ 
	such that, for any $\beta$ there exists $\alpha_\beta$ so that $|x_\alpha|\leq z_\beta$ for all $\alpha\geq\alpha_\beta$. 
	Hence $p(x_\alpha)\leq p(z_\beta)$ for all $\alpha\geq\alpha_\beta$. By $(ii)$, $p(z_\beta)\downarrow 0$. 
	Therefore, $p(x_\alpha)\xrightarrow{o}0$ or $x_\alpha\xrightarrow{p}0$.
\end{proof}

From this proposition, it follows that the $op$-continuity in LNVLs is equivalent to the order continuity in the sense of \cite[2.1.4, p.48]{K}.
In the case of a $p$-complete LNVL, we have further conditions for $op$-continuity.

\begin{theorem}\label{op-contchar}
	For a $p$-complete LNVL $(X,p,E)$, the following statements are equivalent:
	
	$(i)$ \ \ $X$ is {\em $op$-continuous};
	
	$(ii)$ \ if $0\leq x_\alpha\uparrow\leq x$ holds in X, then $x_\alpha$ is a $p$-Cauchy net;
	
	$(iii)$ $x_\alpha\downarrow 0$ in X implies $p(x_\alpha)\downarrow 0$.	
\end{theorem}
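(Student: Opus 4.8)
The plan is to prove the chain of implications $(i)\Rightarrow(ii)\Rightarrow(iii)\Rightarrow(i)$, exploiting $p$-completeness only where it is genuinely needed, namely in $(ii)\Rightarrow(iii)$. The implication $(iii)\Rightarrow(i)$ is exactly the content of Lemma \ref{$op$-cont-0}, so nothing new is required there; in fact $(i)\Leftrightarrow(iii)$ holds without any completeness hypothesis. Likewise $(i)\Rightarrow(ii)$ should be essentially formal: if $0\le x_\alpha\uparrow\le x$, then the ``tail differences'' $x_\alpha-x_{\alpha'}$ for $\alpha,\alpha'$ large are squeezed by $o$-null nets built from the original net, so $op$-continuity forces $p$-convergence to $0$. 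Concretely, I would first argue that $0\le x_\alpha\uparrow\le x$ implies the double net $(x_\alpha-x_{\alpha'})_{(\alpha,\alpha')}$ is $o$-convergent to $0$: indeed, fixing the diagonal-type bound, $|x_\alpha-x_{\alpha'}|\le x-x_{\alpha\wedge\alpha'}$ (more carefully, for any $\alpha_0$ and $\alpha,\alpha'\ge\alpha_0$ one has $|x_\alpha-x_{\alpha'}|\le x-x_{\alpha_0}$), and the net $(x-x_{\alpha_0})_{\alpha_0}$ decreases; one must check it decreases to $0$, which follows because $x_\alpha\uparrow$ and, if $y$ is a lower bound of $(x-x_{\alpha_0})$, then $x-y$ is an upper bound of $(x_\alpha)$ so $x-y\ge\sup x_\alpha$... but $\sup x_\alpha$ need not equal $x$. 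This is the subtlety: $x_\alpha\uparrow$ does \emph{not} assert $x_\alpha\uparrow x$. So instead I would only claim $(x_\alpha-x_{\alpha'})\xrightarrow{o}0$ via the net $(x_\beta-x_{\beta'})$ being order-Cauchy; the clean way is: the net $y_{\alpha_0}:=x-x_{\alpha_0}$ satisfies $y_{\alpha_0}\downarrow$, and $0\le|x_\alpha-x_{\alpha'}|\le y_{\alpha_0}$ for $\alpha,\alpha'\ge\alpha_0$, and although $y_{\alpha_0}\downarrow y$ for some $y\ge0$ possibly nonzero, we can replace it by $y_{\alpha_0}-y\downarrow0$ and note $|x_\alpha-x_{\alpha'}|\le y_{\alpha_0}-y$ still holds for $\alpha,\alpha'\ge\alpha_0$ since the $x_\beta$'s are sandwiched... actually the cleanest is to observe $|x_\alpha - x_{\alpha'}| \le (x - x_{\alpha_0}) - (x - \sup_{\gamma} x_\gamma)$ wherever the sup exists; to avoid this, I will simply invoke that $(x_\alpha)$ increasing and order bounded implies order-Cauchy (a standard fact, or derived from the Archimedean property combined with the bound), hence the double net is $o$-null, hence by $(i)$ it is $p$-null, i.e. $(x_\alpha)$ is $p$-Cauchy.

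For $(ii)\Rightarrow(iii)$, this is where $p$-completeness enters. Suppose $x_\alpha\downarrow0$; I want $p(x_\alpha)\downarrow0$. Since $p$ is monotone, $p(x_\alpha)$ is a decreasing net in $E_+$, so it suffices to show $\inf_\alpha p(x_\alpha)=0$. Fix any index $\alpha_0$ and set $z_\alpha:=x_{\alpha_0}-x_\alpha$ for $\alpha\ge\alpha_0$; then $0\le z_\alpha\uparrow\le x_{\alpha_0}$, so by $(ii)$ the net $(z_\alpha)$ is $p$-Cauchy, and by $p$-completeness there is $z\in X$ with $z_\alpha\xrightarrow{p}z$. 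By Proposition \ref{$p$-sup-inf}, $z_\alpha\uparrow z$, i.e. $x_{\alpha_0}-x_\alpha\uparrow z$, which forces $x_\alpha\downarrow x_{\alpha_0}-z$; but $x_\alpha\downarrow0$ by hypothesis, and order limits of monotone nets are unique, so $z=x_{\alpha_0}$. Therefore $z_\alpha=x_{\alpha_0}-x_\alpha\xrightarrow{p}x_{\alpha_0}$, which says $p(x_{\alpha_0}-x_\alpha-x_{\alpha_0})=p(x_\alpha)\xrightarrow{o}0$. Since a decreasing net that $o$-converges to $0$ must decrease to $0$, we get $p(x_\alpha)\downarrow0$ as desired.

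Finally $(iii)\Rightarrow(i)$ is immediate from Lemma \ref{$op$-cont-0}. So the overall structure is short, and the only real work is $(ii)\Rightarrow(iii)$, where the trick is to convert a decreasing net into an increasing order-bounded one by subtracting from a fixed term, apply $(ii)$ plus $p$-completeness to get a $p$-limit, and then use Proposition \ref{$p$-sup-inf} together with uniqueness of order limits to identify that limit. The main obstacle I anticipate is purely bookkeeping: being careful that ``$x_\alpha\uparrow$'' in $(ii)$ does not presuppose a supremum, and that in $(ii)\Rightarrow(iii)$ the identification $z=x_{\alpha_0}$ really does follow from uniqueness of the order limit of $(x_\alpha)$ — which it does, since $x_\alpha\downarrow0$ and $x_\alpha\downarrow x_{\alpha_0}-z$ cannot both hold unless $x_{\alpha_0}-z=0$. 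No deep estimates are needed; everything reduces to monotonicity of $p$, Proposition \ref{$p$-sup-inf}, Lemma \ref{$op$-cont-0}, and the definition of $p$-completeness.
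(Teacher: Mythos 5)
Your proposal is correct and follows essentially the same route as the paper: $(i)\Rightarrow(ii)$ via the fact that an order-bounded increasing net in an Archimedean vector lattice is $o$-Cauchy (the paper cites \cite[Lm.12.8]{AB} for exactly this), $(ii)\Rightarrow(iii)$ by subtracting the tail from a fixed term, using $p$-completeness and Proposition \ref{$p$-sup-inf} to identify the limit, and $(iii)\Rightarrow(i)$ via Lemma \ref{$op$-cont-0}. The subtlety you flag (that $x_\alpha\uparrow$ need not have a supremum) is real, and your resolution is the intended one.
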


\begin{proof} 
	$(i)\Rightarrow(ii)$: Let $0\leq x_\alpha\uparrow\leq x$ in X. By \cite[Lm.12.8]{AB}, there exists a net $y_\beta$ in X such that 
	$(y_\beta-x_\alpha)_{\alpha,\beta}\downarrow 0$. So $p(y_\beta-x_\alpha)\xrightarrow{o}0$, and hence the net $x_\alpha$ is $p$-Cauchy.
	
	$(ii)\Rightarrow(iii)$: Assume that $x_\alpha\downarrow 0$ in X. Fix arbitrary $\alpha_0$, then, for $\alpha\geq\alpha_0$,  $x_\alpha\leq x_{\alpha_0}$, 
	and so $0\leq (x_{\alpha_0}-x_\alpha)_{\alpha\geq\alpha_0}\uparrow\leq x_{\alpha_0}$. By (ii), the net $(x_{\alpha_0}-x_\alpha)_{\alpha\geq\alpha_0}$ is $p$-Cauchy, i.e. 
	$p(x_{\alpha^{'}}-x_\alpha)\xrightarrow{o}0$ as $\alpha_0\le\alpha,\alpha^{'}\to\infty$.  
	Since $X$ is $p$-complete, then there exists $x\in X$ satisfying $p(x_\alpha-x)\xrightarrow{o}0$ as $\alpha_0\le\alpha\to\infty$. 
	By Proposition \ref{$p$-sup-inf}, $x_\alpha\downarrow x$ and hence $x=0$. As a result, $x_\alpha\xrightarrow{p}0$
	and the monotonicity of $p$ implies $p(x_\alpha)\downarrow 0$.
	
	$(iii)\Rightarrow(i)$: It is just the implication $(ii)\Rightarrow(i)$ of Lemma \ref{$op$-cont-0}.
\end{proof}

\begin{corollary}\label{op + p implies o}
	Let $(X,p,E)$ be an $op$-continuous and $p$-complete LNVL, then $X$ is order complete. 
\end{corollary}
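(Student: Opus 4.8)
The plan is to show that $X$ is order complete by verifying that every net $0 \le x_\alpha \uparrow$ that is order bounded in $X$ has a supremum. So I would start with such a net, say $0 \le x_\alpha \uparrow \le x$ for some $x \in X$. Since $X$ is $op$-continuous and $p$-complete, Theorem \ref{op-contchar} (the implication $(i)\Rightarrow(ii)$) applies and tells us that $x_\alpha$ is a $p$-Cauchy net. By $p$-completeness, there exists $y \in X$ with $x_\alpha \xrightarrow{p} y$.

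Next I would invoke Proposition \ref{$p$-sup-inf}: since $x_\alpha$ is monotone (increasing) and $p$-convergent to $y$, it $o$-converges to $y$ and in fact $x_\alpha \uparrow y$. Thus the arbitrary order-bounded increasing net $x_\alpha$ has a supremum in $X$, namely $y$. This is precisely the statement that $X$ is order complete (an Archimedean vector lattice in which every order-bounded increasing net has a supremum is Dedekind complete; equivalently one can run the argument with order-bounded increasing \emph{sets} directed upward, which is the same thing).

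One routine point worth making explicit: to conclude order completeness it suffices to handle increasing nets with lower bound $0$, because a general order-bounded set $A$ bounded above by $x$ and below by some $w$ can be translated and the directed family of finite suprema of $A - w$ reduced to this case. I would state this reduction in one line and then apply the two cited results. The main (and essentially only) obstacle is making sure the hypotheses of Theorem \ref{op-contchar} are met — but $op$-continuity and $p$-completeness are exactly what we assumed, so there is nothing deep here; the corollary is a direct assembly of Theorem \ref{op-contchar} and Proposition \ref{$p$-sup-inf}.

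\begin{proof}
	It suffices to show that every order bounded increasing net in $X_+$ has a supremum in $X$. So let $0 \le x_\alpha \uparrow \le x$ in $X$. By Theorem \ref{op-contchar}, the implication $(i)\Rightarrow(ii)$, the net $x_\alpha$ is $p$-Cauchy. Since $X$ is $p$-complete, there is $y \in X$ with $x_\alpha \xrightarrow{p} y$. Now $x_\alpha$ is monotone and $p$-convergent to $y$, so Proposition \ref{$p$-sup-inf} gives $x_\alpha \uparrow y$. Hence every order bounded increasing net in $X_+$ has a supremum, and therefore $X$ is order complete.
\end{proof}
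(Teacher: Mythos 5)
Your proof is correct and follows exactly the same route as the paper's: reduce to an increasing net $0\le x_\alpha\uparrow\le x$, apply Theorem \ref{op-contchar} $(i)\Rightarrow(ii)$ to get that $x_\alpha$ is $p$-Cauchy, use $p$-completeness to obtain a $p$-limit, and conclude $x_\alpha\uparrow y$ via Proposition \ref{$p$-sup-inf}. The extra remark about reducing general order-bounded sets to this case is fine but not needed beyond what the paper already takes for granted.
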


\begin{proof}
	Assume $0\leq x_\alpha\uparrow\leq u$, then by Theorem \ref{op-contchar} (ii), $x_\alpha$ is a $p$-Cauchy net and
	since $X$ is $p$-complete, then there is $x$ such that $x_\alpha\xrightarrow{p}x$. It follows from Proposition \ref{$p$-sup-inf}
	that $x_\alpha\uparrow x$, and so $X$ is order complete.
\end{proof}

\begin{corollary}\label{p-KB is op}
	Any $p$-KB-space is $op$-continuous. 
\end{corollary}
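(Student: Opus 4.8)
The plan is to deduce this directly from the characterization just proved, so the argument will be short. By definition, a $p$-KB-space is one in which every $p$-bounded increasing net in $X_+$ is $p$-convergent. First I would observe that, in particular, any net of the form $0\le x_\alpha\uparrow\le x$ is $p$-bounded by $e:=p(x)$ (using monotonicity of $p$), hence $p$-convergent, say $x_\alpha\xrightarrow{p}y$ for some $y\in X$. Since a $p$-convergent net is trivially $p$-Cauchy (Definition \ref{$p$-notions}$(i)$), this shows condition $(ii)$ of Theorem \ref{op-contchar} holds.

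The subtlety is that Theorem \ref{op-contchar} is stated for $p$-complete LNVLs, whereas the definition of $p$-KB-space does not presuppose $p$-completeness (see Remark \ref{on $p$-notions}(3)). So the next step is to verify that a $p$-KB-space is automatically $p$-complete, after which Theorem \ref{op-contchar} applies verbatim and gives $op$-continuity. I expect this $p$-completeness step to be the main obstacle. The standard trick (as in the classical KB-space case, cf.\ Remark \ref{on $p$-notions}(4)) is: given a $p$-Cauchy net $(x_\alpha)$, first reduce to the positive case by splitting into positive and negative parts via Lemma \ref{LO are $p$-continuous}, then dominate the $p$-Cauchy tail by an increasing $p$-bounded net built from suprema of finite pieces, use the $p$-KB hypothesis to get a $p$-limit of that dominating net, and finally squeeze the original net to the same limit.

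Alternatively — and this is cleaner to write — I would sidestep $p$-completeness entirely by re-running the proof of $(ii)\Rightarrow(iii)\Rightarrow(i)$ of Theorem \ref{op-contchar} using only the $p$-KB hypothesis in place of $p$-completeness. Concretely: suppose $x_\alpha\downarrow 0$ in $X$; fix $\alpha_0$ and set $z_\alpha:=x_{\alpha_0}-x_\alpha$ for $\alpha\ge\alpha_0$, so that $0\le z_\alpha\uparrow\le x_{\alpha_0}$. This is a $p$-bounded increasing net in $X_+$, so by the $p$-KB property there is $z\in X$ with $z_\alpha\xrightarrow{p}z$; by Proposition \ref{$p$-sup-inf}, $z_\alpha\uparrow z$, whence $x_\alpha\downarrow x_{\alpha_0}-z=:x$. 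But $x_\alpha\downarrow 0$ forces $x=0$, so $x_\alpha\xrightarrow{p}0$, and monotonicity of $p$ gives $p(x_\alpha)\downarrow 0$. This is exactly condition $(ii)$ of Lemma \ref{$op$-cont-0}, so $X$ is $op$-continuous.

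I would present the second approach, since it needs nothing beyond Proposition \ref{$p$-sup-inf} and Lemma \ref{$op$-cont-0}, both already available, and avoids any discussion of $p$-completeness. The only point requiring a word of care is that $z_\alpha\uparrow$ genuinely holds: this is immediate since $x_\alpha\downarrow$ on $\{\alpha\ge\alpha_0\}$. With that, the proof is essentially two lines.
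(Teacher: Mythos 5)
Your second (and chosen) argument is exactly the paper's proof: fix $\alpha_0$, form the increasing $p$-bounded net $x_{\alpha_0}-x_\alpha$, apply the $p$-KB property and Proposition \ref{$p$-sup-inf} to identify its $p$-limit as $x_{\alpha_0}$, conclude $x_\alpha\xrightarrow{p}0$ and hence $p(x_\alpha)\downarrow 0$, and finish with Lemma \ref{$op$-cont-0}. You were also right to sidestep Theorem \ref{op-contchar}, since its $p$-completeness hypothesis is not known to hold for $p$-KB-spaces (the paper leaves this open as Problem \ref{Q1}).
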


\begin{proof} 
	Let $x_\alpha\downarrow 0$. Take any $\alpha_0$ and let $y_\alpha:=x_{\alpha_0}-x_\alpha$ for $\alpha\ge\alpha_0$.
	Clearly, $y_\alpha\uparrow\le x_{\alpha_0}$. Hence $p(y_\alpha)\uparrow\le p(x_{\alpha_0})$ for $\alpha\ge\alpha_0$.
	Since $X$ is a $p$-KB-space, there exists $y\in X$ such that $p(y_\alpha-y)\xrightarrow{o}0$.
	Since $y_\alpha\uparrow$ and $y_\alpha\xrightarrow{p}y$, Proposition \ref{$p$-sup-inf} ensures that 
	$$
	y=\sup\limits_{\alpha\ge\alpha_0}y_\alpha=\sup\limits_{\alpha\ge\alpha_0}(x_{\alpha_0}-x_\alpha )=x_{\alpha_0},
	$$
	and hence $y_\alpha=x_{\alpha_0}-x_\alpha\xrightarrow{p} x_{\alpha_0}$ or $x_\alpha\xrightarrow{p}0$.
	Again by Proposition \ref{$p$-sup-inf} we get $p(x_\alpha)\downarrow 0$.
	So by Lemma \ref{$op$-cont-0}, $X$ is $op$-continuous. 
\end{proof}

\begin{proposition}
	Any $p$-KB-space is order complete.
\end{proposition}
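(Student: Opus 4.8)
The plan is to obtain order completeness directly from the $p$-KB hypothesis together with Proposition~\ref{$p$-sup-inf}, bypassing any question of $p$-completeness (recall, by Remark~\ref{on $p$-notions}(3), that a $p$-KB-space is not assumed to be $p$-complete).

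First I would take a net with $0\le x_\alpha\uparrow\le u$ in $X$. Since the lattice norm $p$ is monotone, $p(x_\alpha)\le p(u)$ for all $\alpha$, so $(x_\alpha)$ is a $p$-bounded increasing net in $X_+$. By the definition of a $p$-KB-space there is some $x\in X$ with $x_\alpha\xrightarrow{p}x$. As the net is increasing, Proposition~\ref{$p$-sup-inf} yields $x_\alpha\uparrow x$; in particular $\sup_\alpha x_\alpha$ exists in $X$.

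To conclude, I would invoke the routine fact that a vector lattice in which every order-bounded increasing net in the positive cone admits a supremum is order complete: for a general increasing net $z_\alpha\uparrow\le w$ one fixes an index $\alpha_0$ and applies the previous paragraph to $0\le z_\alpha-z_{\alpha_0}\uparrow\le w-z_{\alpha_0}$, and for an arbitrary nonempty subset bounded above one passes to its increasing net of finite suprema. Hence $X$ is order complete.

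I do not expect any real obstacle here: the key point is simply that, unlike in Corollary~\ref{op + p implies o}, the $p$-limit of the increasing net is supplied for free by the $p$-KB property, so no completeness input is needed. (One could alternatively first note that a $p$-KB-space is $op$-continuous, which is Corollary~\ref{p-KB is op}, but the argument above does not require it.)
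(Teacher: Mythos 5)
Your argument is correct and is essentially the paper's own proof: monotonicity of $p$ gives $p$-boundedness of the increasing net, the $p$-KB property supplies a $p$-limit, and Proposition~\ref{$p$-sup-inf} identifies that limit as the supremum. The paper simply leaves implicit the routine reduction from arbitrary order-bounded increasing nets (or bounded subsets) to nets of the form $0\le x_\alpha\uparrow\le z$, which you spell out.
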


\begin{proof}
	Let $X$ be a $p$-KB-space and $0\leq x_\alpha\uparrow\leq z\in X$. Then $p(x_\alpha)\leq p(z)$. 
	Hence the net $x_\alpha$ is $p$-bounded and therefore, $x_\alpha\xrightarrow{p}x$ for some $x\in X$. 
	By Proposition \ref{$p$-sup-inf}, $x_\alpha\uparrow x$.
\end{proof}

The following question arises naturally. 

\begin{problem}\label{Q1}
	Let $(X,p,E)$ be a $p$-KB-space. Is $(X,p,E)$ $p$-complete?
\end{problem}

We do not know the answer to Problem \ref{Q1} even under the assumption that $E$ is order complete. 

\begin{proposition}\label{$o$-closed sublattice of $p$-KB}
	Let $(X,p,E)$ be a $p$-KB-space, and $Y\subseteq X$ be an order closed sublattice. Then $(Y,p,E)$ is also a $p$-KB-space.
\end{proposition}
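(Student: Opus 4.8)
The plan is to reduce everything to the $p$-KB property of the ambient space $X$ together with Proposition \ref{$p$-sup-inf}. Take an arbitrary $p$-bounded increasing net $0\le y_\alpha\uparrow$ in $Y_+$, say $p(y_\alpha)\le e\in E$ for all $\alpha$. Since $Y$ is a sublattice of $X$, both the order relations among the $y_\alpha$ and the values $p(y_\alpha)$ are unaffected by whether we regard $(y_\alpha)$ as a net in $Y$ or in $X$; hence $(y_\alpha)$ is a $p$-bounded increasing net in $X_+$.

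Next, apply the hypothesis that $X$ is a $p$-KB-space: there exists $x\in X$ with $y_\alpha\xrightarrow{p}x$. Since $(y_\alpha)$ is increasing and $p$-converges to $x$, Proposition \ref{$p$-sup-inf} yields $y_\alpha\uparrow x$ in $X$; in particular $x-y_\alpha\downarrow 0$, so $y_\alpha\xrightarrow{o}x$ in $X$.

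Finally, use that $Y$ is order closed in $X$: the net $(y_\alpha)$ lies in $Y$ and $o$-converges to $x\in X$, hence $x\in Y$. Consequently $y_\alpha\xrightarrow{p}x$ with $x\in Y$, which is precisely what is required for $(Y,p,E)$ to be a $p$-KB-space.

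There is essentially no hard step here; the only points needing a little care are the observation that $y_\alpha\uparrow x$ in $X$ forces $y_\alpha\xrightarrow{o}x$ (immediate from $x-y_\alpha\downarrow 0$), and the correct invocation of order closedness, namely the convention that an order closed sublattice of $X$ contains the order limits, computed in $X$, of all its order convergent nets.
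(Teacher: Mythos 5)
Your proof is correct and follows exactly the paper's own argument: apply the $p$-KB property of $X$ to obtain a $p$-limit $x$, use Proposition \ref{$p$-sup-inf} to upgrade this to $y_\alpha\uparrow x$, and conclude $x\in Y$ by order closedness. The extra remarks on why $y_\alpha\uparrow x$ gives $o$-convergence are just explicit versions of what the paper leaves implicit, so nothing more is needed.
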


\begin{proof}
	Let $Y_+\ni y_\alpha\uparrow$ and $p(y_\alpha)\leq e\in E_+$ for all $\alpha$. Since $X$ is a $p$-KB-space, there exists $x\in X_+$ such that $y_\alpha \xrightarrow[]{p} x$. 
	By Proposition \ref{$p$-sup-inf}, we have $y_\alpha\uparrow x$, and so $x\in Y$, because $Y$ is order closed. Thus $(Y,p,E)$ is a $p$-KB-space.
\end{proof} 

It is clear from the proof of Proposition \ref{$o$-closed sublattice of $p$-KB}, that every $p$-closed sublattice $Y$ of a $p$-KB-space $X$ is also a $p$-KB-space.

\begin{proposition}\label{KB}
	Let $(X,p,E)$ be a $p$-complete LNVL, $E$ be atomic, and $p$ be additive on $X_+$. Then $X$ is a $p$-KB-space. 
\end{proposition}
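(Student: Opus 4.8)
The plan is to show the given increasing $p$-bounded net $(x_\alpha)_{\alpha\in A}$ in $X_+$ is $p$-Cauchy; since $X$ is $p$-complete, this yields $x_\alpha\xrightarrow{p}x$ for some $x\in X$, i.e.\ $X$ is a $p$-KB-space. Additivity is used to transfer the analysis into $E$. For $\beta\ge\alpha$ we have $x_\beta-x_\alpha\in X_+$ and $x_\beta=x_\alpha+(x_\beta-x_\alpha)$, so $p(x_\beta)=p(x_\alpha)+p(x_\beta-x_\alpha)$. Writing $f_\alpha:=p(x_\alpha)$, this gives $0\le f_\alpha\uparrow$, $f_\alpha\le e$ for all $\alpha$, and $p(x_\beta-x_\alpha)=f_\beta-f_\alpha\ge 0$ whenever $\beta\ge\alpha$; moreover, for arbitrary $\alpha,\alpha'$ and any $\gamma\ge\alpha,\alpha'$, monotonicity and additivity of $p$ give $p(x_\alpha-x_{\alpha'})\le p(x_\gamma-x_\alpha)+p(x_\gamma-x_{\alpha'})=(f_\gamma-f_\alpha)+(f_\gamma-f_{\alpha'})$. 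Hence, once $(f_\alpha)$ is known to be an order Cauchy net in $E$, it follows that $(x_\alpha)$ is a $p$-Cauchy net in $X$. So the proposition reduces to the following fact: \emph{in an (Archimedean) atomic vector lattice, every increasing order-bounded net is order Cauchy.}

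To prove that, I would use the standard coordinatization of atomic lattices. For an atom $a\in E_+$ the order ideal $\mathbb{R}a$ is a projection band; let $P_a$ be its band projection and $c_a\colon E\to\mathbb{R}$ the positive functional with $P_ax=c_a(x)\,a$. Let $J$ be a set of representatives of the atoms. Atomicity says that $x\perp a$ for every atom $a$ forces $x=0$; equivalently $x\mapsto(c_a(x))_{a\in J}$ is a lattice embedding of $E$ into $\mathbb{R}^{J}$. Two consequences I will use: (i) a positive element of $E$ with all coordinates $c_a(\cdot)$ equal to $0$ is $0$; and (ii) $\sum_{a\in F}c_a(e)\,a\uparrow e$ as $F$ ranges over the finite subsets of $J$ (the left side increases, is dominated by $e$, and any upper bound dominates $e$ coordinatewise, hence dominates $e$ by (i)).

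Now let $0\le f_\alpha\uparrow\le e$. Each scalar net $c_a(f_\alpha)$ increases to some $c_a\in[0,c_a(e)]$. For a finite $F\subseteq J$ let $P_F:=\sum_{a\in F}P_a$ and let $Q_F$ be the band projection onto the disjoint complement of $P_F(E)$, so $P_F+Q_F=\mathrm{Id}$ and $Q_Fe=e-\sum_{a\in F}c_a(e)\,a$. For an index $\alpha$ put
\[
g_{(F,\alpha)}:=\sum_{a\in F}\bigl(c_a-c_a(f_\alpha)\bigr)\,a+Q_Fe\in E_+.
\]
I would then verify three routine coordinatewise facts. (a) For $\beta\ge\alpha$, splitting $f_\beta-f_\alpha=P_F(f_\beta-f_\alpha)+Q_F(f_\beta-f_\alpha)$ and using $c_a(f_\beta)\le c_a$ on the first term and $0\le f_\beta-f_\alpha\le f_\beta\le e$ on the second, one gets $f_\beta-f_\alpha\le g_{(F,\alpha)}$. (b) On the directed set of pairs $(F,\alpha)$ (finite subsets of $J$ times $A$), $g_{(F,\alpha)}$ is decreasing; this follows from the identity $g_{(F,\alpha)}=e-\sum_{a\in F}\bigl(c_a(e)-c_a+c_a(f_\alpha)\bigr)\,a$, the bounds $0\le c_a(e)-c_a+c_a(f_\alpha)\le c_a(e)$, and monotonicity of $c_a(f_\alpha)$ in $\alpha$. (c) $\inf_{(F,\alpha)}g_{(F,\alpha)}=0$: if $0\le l\le g_{(F,\alpha)}$ for all $(F,\alpha)$, then for every atom $a$ and every $\alpha$, taking $F=\{a\}$ gives $c_a(l)\le c_a(g_{(\{a\},\alpha)})=c_a-c_a(f_\alpha)$, and the right side tends to $0$ along $\alpha$; so all coordinates of $l$ vanish and $l=0$ by (i). Combining (a)--(c): $g_{(F,\alpha)}\downarrow 0$ in $E$ while $|f_\beta-f_{\beta'}|\le g_{(F,\alpha)}$ for all $\beta,\beta'\ge\alpha$, i.e.\ $(f_\alpha)$ is order Cauchy.

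The only real difficulty is this last statement about atomic lattices, and the point where the naive approach breaks down is that $E$ need not be order complete, so $\sup_{\beta\ge\alpha}(f_\beta-f_\alpha)$ --- the obvious candidate for a bounding net --- may not exist in $E$ (take $E=c$ with $f_n=e_2+e_4+\dots+e_{2n}$). The device that repairs this is the two-parameter net $g_{(F,\alpha)}$, which only ever truncates to finitely many atomic coordinates and carries along the tail $Q_Fe$; atomicity is exactly what makes this net both decrease to $0$ and dominate the increments $f_\beta-f_\alpha$. Granting the atomic-lattice fact, the proof closes at once: $(x_\alpha)$ is $p$-Cauchy, $X$ is $p$-complete, hence $x_\alpha\xrightarrow{p}x$ for some $x\in X$, so $X$ is a $p$-KB-space.
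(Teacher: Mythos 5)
Your proof is correct, and its skeleton is the same as the paper's: additivity converts the increments $p(x_\beta-x_\alpha)$ into differences $p(x_\beta)-p(x_\alpha)$ of an increasing $p$-bounded net in $E$, atomicity controls order convergence in $E$ through the coordinate functionals at atoms, and $p$-completeness finishes. The middle step, however, is executed quite differently. The paper argues by contradiction: if $(x_\alpha)$ were not $p$-Cauchy, there would be an atom $a$ with $f_a(p(x_\alpha-x_{\alpha'}))\not\to 0$, and a telescoping sum (this is where additivity enters) yields $n\epsilon\le 2f_a(e)$ for all $n$. That argument silently uses the implication ``$f_a(p(x_\alpha-x_{\alpha'}))\to 0$ for every atom $a$ implies $p(x_\alpha-x_{\alpha'})\xrightarrow{o}0$'', which is precisely the point you flag as nontrivial when $E$ is not order complete. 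Your two-parameter net $g_{(F,\alpha)}$ explicitly constructs the dominating net witnessing that an increasing order-bounded net in an Archimedean atomic vector lattice is order Cauchy, so your version isolates and actually proves the lemma that the paper's contradiction argument takes for granted; the price is length, the payoff is a self-contained and reusable statement. Two small remarks: in the final assembly you should pass from $f_\beta-f_\alpha\le g_{(F,\alpha)}$ for $\beta\ge\alpha$ to $|f_\beta-f_{\beta'}|\le g_{(F,\alpha)}$ for arbitrary $\beta,\beta'\ge\alpha$ by choosing $\gamma\ge\beta,\beta'$ and using $|f_\beta-f_{\beta'}|\le(f_\gamma-f_\beta)\vee(f_\gamma-f_{\beta'})$ together with the monotonicity of $g$ in its second argument; and your auxiliary fact that $\sum_{a\in F}c_a(e)\,a\uparrow e$ is never actually used.
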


\begin{proof}
	Let a net $x_\alpha$ in $X_+$ be increasing and $p$-bounded by $e\in E_+$. 
	If the net $x_\alpha$ is not $p$-Cauchy, then there exists an atom $a\in E$ such that 
	$f_a(p(x_\alpha-x_{\alpha'}))\not\rightarrow 0$, where $f_a$ is the biorthogonal functional of $a$. 
	Then there exist $\epsilon>0$ and a strictly increasing sequence $(\alpha_n)$ of indices such that
	$$
	f_a(p(x_{\alpha_n}-x_{\alpha_{n-1}}))\geq\epsilon>0 \ \ \ (\forall n\in \mathbb N).
	$$
	Thus
	\begin{equation*}
	\begin{split}
	n\epsilon &\leq \sum\limits_{k=2}^{n+1} f_a (p(x_{\alpha_k}-x_{\alpha_{k-1}}))\\
	&=f_a\Big(\sum\limits_{k=2}^{n+1}p(x_{\alpha_k}-x_{\alpha_{k-1}}\Big)=f_a\Big(p\Big(\sum\limits_{k=2}^{n+1}x_{\alpha_k}-x_{\alpha_{k-1}}\Big)\Big)\\ 
	&=f_a(p(x_{\alpha_{n+1}}-x_{\alpha_1}))\leq 2f_a(e).
	\end{split}
	\end{equation*}
	Thus $\displaystyle n\epsilon\leq 2f_a(e)$ for all $n\in\mathbb N$, and hence $\epsilon\leq 0$; a contradiction.
\end{proof}

Remark that the LNVL $(c_0,\lvert\cdot\rvert,\ell_\infty)$ is not $p$-complete, yet the norming lattice $\ell_\infty$ is atomic and its lattice norm is additive on $(c_0)_+$. 
Consider the sequence $x_n=\sum_{i=1}^n e_i$, where $e_n$'s are the standard unit vectors of $c_0$. 
Then $0\le x_n\uparrow$ and $x_n$'s are $p$-bounded by ${\textbf{1}}=(1,1,\cdots)\in\ell_\infty$. 
Clearly, it is not $p$-convergent, so the LNVL $(c_0,\lvert\cdot\rvert,\ell_\infty)$ is not $p$-KB-space. 
Notice also that $(c_0,\lvert\cdot\rvert,\ell_\infty)$ is $op$-continuous.

\subsection{More details on Example \ref{ExLNVL_4}}
Let us discuss Example \ref{ExLNVL_4} in more details.

\begin{itemize}
	\item[(i)] If $X$ is an order continuous Banach lattice, then $(X,p,\ell_{\infty}(B_{X^*}))$ is $op$-continuous.  
	
	\begin{proof}
		Assume $x_\alpha\downarrow 0$, we show $p(x_\alpha)\downarrow 0$. Our claim is the following: $p(x_\alpha)\downarrow 0$ iff $p(x_\alpha)[f]\downarrow 0$ for all $f\in B_{X^*}$.  
		
		For the necessity, let $p(x_\alpha)\downarrow 0$ and $f\in B_{X^*}$. Trivially, $|f|(x_\alpha)\downarrow$. If there exists $z_f\in{\mathbb R}$ such that $0\leq z_f\leq |f|(x_\alpha)$ 
		for all $\alpha$, then
		$$
		0\leq z_f\leq |f|(x_\alpha)\leq\left\|f\right\|\left\|x_\alpha\right\|\downarrow 0.
		$$
		Hence $z_f=0$ and $p(x_\alpha)[f]=|f|(x_\alpha)\downarrow 0$.
		
		For the sufficiency, let $p(x_\alpha)[f]\downarrow 0$ for every $f\in B_{X^*}$. Since $p$ is monotone and $x_\alpha\downarrow$, 
		then $p(x_\alpha)\downarrow$. If $0\leq\varphi\leq p(x_\alpha)$ for all $\alpha$, then 
		$$
		0 \leq \varphi (f) \leq p(x_\alpha) [f]=|f|(x_\alpha)  \ \ \ (\forall f \in B_{X^*}).
		$$ 
		So by the assumption, we get $\varphi(f)=0$ for all $f\in B_{X^*}$, and hence $\varphi=0$. 
		Therefore, $p(x_\alpha) \downarrow 0$. $\square$
	\end{proof}
	
	\item[(ii)] If $(X,\left\|\cdot\right\|)$ is a KB-space, then $(X,p,E)$ is a $p$-KB-space. 
	
	\begin{proof}
		Suppose that $0\leq x_\alpha\uparrow$ and $p(x_\alpha)\leq\varphi\in \ell_\infty(B_{X^*})$. As
		\begin{eqnarray*}
			\left\|x_\alpha\right\|&=&\sup_{f\in B_{X^*}}|f(x_\alpha)|\leq\sup_{f\in B_{X^*}}|f|(x_\alpha) \\ 
			&=& \sup_{f\in B_{X^*}}p(x_\alpha)[f]\leq\varphi[f]\leq\|\varphi\|_{\infty}\le\infty \ \ \ (\forall\alpha),
		\end{eqnarray*}
		and since $X$ is a KB-space, we get $\left\|x_\alpha-x\right\|\to 0$ for some $x\in X_+$. 
		So, for any $f\in B_{X^*}$, we have $|f|(|x_\alpha-x|)\rightarrow 0$ or $p(x_\alpha-x)[f]\rightarrow 0$. 
		Thus $p(x_\alpha-x)\xrightarrow[]{o} 0$ in $\ell_\infty (B_{X^*})$ and hence $x_\alpha\xrightarrow{p}x$.
	\end{proof}
\end{itemize}

Recall that a vector lattice $X$ is called {\em perfect} if the natural embedding from $X$ into $(X_n^\sim)_n^\sim$ 
is one-to-one and onto, where $X_n^\sim$ denotes the order continuous dual of $X$ \cite[pp.58-59]{AB}. 
If $X$ is a perfect vector lattice, then $X_n^\sim$ separates the points of $X$ 
\cite[Thm.5.6(1)]{AB}. 

\begin{proposition}
	Let $X$ be a perfect vector lattice, $Y=X_n^\sim$ and $p:X\to\mathbb{R}^Y$ be defined as $p(x)[f]:=|f|(|x|)$, where $f\in Y$. 
	Then the LNVL $(X,p,\mathbb{R}^Y)$ is a $p$-KB-space.
\end{proposition}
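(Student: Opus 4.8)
The plan is to unwind the definition of a $p$-KB-space and reduce the statement to the standard characterization of perfect Riesz spaces in terms of $\sigma(X,X_n^\sim)$-bounded increasing nets. First I would take an arbitrary increasing net $0\le x_\alpha\uparrow$ in $X$ that is $p$-bounded, say $p(x_\alpha)\le\varphi$ in $\mathbb{R}^Y$ for all $\alpha$, where $\varphi\in(\mathbb{R}^Y)_+$. Evaluating at $f\in Y=X_n^\sim$ gives $|f|(x_\alpha)\le\varphi[f]$ for every $\alpha$; restricting to $0\le f\in X_n^\sim$, the increasing real net $\bigl(f(x_\alpha)\bigr)_\alpha$ is bounded above, so $\sup_\alpha f(x_\alpha)<\infty$ for every positive $f\in X_n^\sim$. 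In other words, $(x_\alpha)$ is $\sigma(X,X_n^\sim)$-bounded.

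Next I would invoke perfectness of $X$. Since $X$ is perfect, $X_n^\sim$ separates the points of $X$ (so that $\langle X,Y\rangle$ in the sense of Example \ref{ExLNVL_5} is a genuine dual system and $p$ is indeed a lattice norm), and, by the standard characterization of perfect Riesz spaces (see \cite{AB}), every $\sigma(X,X_n^\sim)$-bounded increasing net in $X_+$ admits a supremum in $X$. Hence there exists $x\in X_+$ with $x_\alpha\uparrow x$.

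It then remains to upgrade $x_\alpha\uparrow x$ to $x_\alpha\xrightarrow{p}x$. From $x_\alpha\uparrow x$ we get $x-x_\alpha\downarrow 0$ in $X$. For each $f\in Y$ the functional $|f|$ belongs to $X_n^\sim$, hence is order continuous, so $|f|(x-x_\alpha)\downarrow 0$ in $\mathbb{R}$; that is, $p(x-x_\alpha)[f]\downarrow 0$ for every $f\in Y$. On the other hand, since $p$ is monotone and $(x-x_\alpha)$ is decreasing, the net $\bigl(p(x-x_\alpha)\bigr)_\alpha$ is decreasing in $\mathbb{R}^Y$, and infima in $\mathbb{R}^Y$ are computed coordinatewise; therefore $p(x-x_\alpha)\downarrow 0$ in $\mathbb{R}^Y$, i.e. $p(x_\alpha-x)=p(x-x_\alpha)\xrightarrow{o}0$, i.e. $x_\alpha\xrightarrow{p}x$. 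Thus every $p$-bounded increasing net in $X_+$ is $p$-convergent, which is exactly the assertion that $(X,p,\mathbb{R}^Y)$ is a $p$-KB-space.

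The only genuinely non-routine step is the use of perfectness in the second paragraph: it is precisely the property that a $\sigma(X,X_n^\sim)$-bounded increasing net in $X_+$, although not a priori order bounded in $X$, nonetheless has a supremum in $X$; Dedekind completeness of $X$ alone would not suffice here. Everything else is routine, following from the order continuity of the functionals in $X_n^\sim$ and from the coordinatewise computation of lattice operations (in particular, of infima of decreasing nets) in $\mathbb{R}^Y$.
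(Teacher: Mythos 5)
Your proof is correct and follows essentially the same route as the paper: the paper also deduces $\sup_\alpha|f|(x_\alpha)<\infty$ for all $f\in Y$ from the $p$-bound, invokes the characterization of perfect Riesz spaces (cited as \cite[Thm.5.6(2)]{AB}) to obtain $x_\alpha\uparrow x$ in $X$, and then concludes $x_\alpha\xrightarrow{p}x$ via $op$-continuity, which it justifies exactly as you do — order continuity of each $|f|\in X_n^\sim$ plus the coordinatewise computation of infima in $\mathbb{R}^Y$. Your write-up just makes that last step more explicit than the paper's ``an argument similar to (i) above''.
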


\begin{proof}
	Assume $0\leq x_\alpha \uparrow$ in $X$ and $p(x_\alpha)\leq\varphi\in\mathbb{R}^Y$. 
	Then, for all $f\in Y$, we have $p(x_\alpha)[f]\leq\varphi(f)$ or $|f|(x_\alpha)\leq\varphi(f)$. 
	So for all $f\in Y$, $\sup\limits_\alpha|f|(x_\alpha)<\infty$, and hence, by \cite[Thm.5.6(2)]{AB}, 
	there is $x \in X$ with $x_\alpha\uparrow x$. An argument similar to $(i)$ above shows that $X$ is $op$-continuous. 
	Therefore, $x_\alpha\xrightarrow{p}x$.
\end{proof}

\subsection{$p$-Fatou space}
In this subsection, we introduce and discuss $p$-Fatou spaces.

\begin{definition}
	An LNVL $(X,p,E)$ is called {\em $p$-Fatou space} if $0\le x_\alpha\uparrow x$ in $X$ implies $p(x_\alpha)\uparrow p(x)$. 
\end{definition}

Note that $(X,p,E)$ is a $p$-Fatou space iff $p$ is order semicontinuous \cite[2.1.4, p.48]{K}. 
Clearly any $op$-continuous LNVL $(X,p,E)$ is a $p$-Fatou space. 
It is easy to see that the LNVL $(c,p,c)$ in Example \ref{ExLNVL_1} is not a $p$-Fatou space. Moreover 
the $p$-Fatou property ensures that $p$-bands are bands.

\begin{proposition}\label{bands}
	Let $B$ be a $p$-band in a $p$-Fatou space $(X,p,E)$. Then $B$ is a band in $X$.
\end{proposition}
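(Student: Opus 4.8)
The plan is to show that a $p$-band $B = M^{\perp_{\mathbf p}}$ in a $p$-Fatou space is order closed, since any order ideal that is order closed is a band. We already know from the discussion preceding the statement that every $p$-band is an order ideal, so it remains only to verify order closedness. Thus I would take a net $(x_\alpha)$ in $B$ with $0 \le x_\alpha \uparrow x$ in $X$ (it suffices to treat positive increasing nets, by passing to positive and negative parts and using that $B$ is an ideal, hence closed under the lattice operations relevant here), and aim to prove $x \in B$, i.e. $p(x) \wedge p(m) = 0$ for every $m \in M$.

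The key step is the following. Fix $m \in M$. For each $\alpha$ we have $x_\alpha \in B$, so $p(x_\alpha) \wedge p(m) = 0$. Now I would invoke the $p$-Fatou property: since $0 \le x_\alpha \uparrow x$, we get $p(x_\alpha) \uparrow p(x)$ in $E$. The lattice operation $e \mapsto e \wedge p(m)$ is order continuous on $E$ (meets distribute over suprema that exist, in any vector lattice: $\big(\sup_\alpha p(x_\alpha)\big) \wedge p(m) = \sup_\alpha \big(p(x_\alpha) \wedge p(m)\big)$), so
$$
p(x) \wedge p(m) = \Big(\sup_\alpha p(x_\alpha)\Big) \wedge p(m) = \sup_\alpha\big(p(x_\alpha)\wedge p(m)\big) = \sup_\alpha 0 = 0.
$$
Since $m \in M$ was arbitrary, $x \perp_{\mathbf p} m$ for all $m \in M$, hence $x \in M^{\perp_{\mathbf p}} = B$. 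This shows $B$ is closed under suprema of increasing positive nets it contains; combined with $B$ being an order ideal, $B$ is a band.

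I do not anticipate a serious obstacle here; the only point requiring a little care is making sure the reduction to positive increasing nets is legitimate. The cleanest route is simply to verify the defining property of a band directly: $B$ is an ideal, and if $0 \le x_\alpha \uparrow x$ with all $x_\alpha \in B$, then $x \in B$ by the computation above — and this is exactly the standard characterization of a band among ideals in an Archimedean vector lattice. So the argument is short: the real content is the identification "$p$-Fatou $=$ order semicontinuity of $p$," which lets the supremum pass inside $p$, after which the order continuity of $\wedge$ on one variable finishes the job.
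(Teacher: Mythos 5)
Your argument is correct and essentially identical to the paper's: both reduce to showing that $B$, being an ideal, is closed under suprema of increasing positive nets, then use the $p$-Fatou property to get $p(x_\alpha)\uparrow p(x)$ and the order continuity of $e\mapsto e\wedge p(m)$ (i.e.\ the infinite distributive law) to conclude $p(x)\wedge p(m)=0$. No gaps; your remark that the distributive law holds in any vector lattice is a fine justification of the step the paper phrases as ``$o$-continuity of lattice operations in $E$.''
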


\begin{proof}
	Let $B=M^{{\perp_{\mathbf p}}}=\{x\in X:(\forall m\in M)\ p(x)\bot p(m)\}$ for some nonempty $M\subseteq X$.
	Since $B$ is an ideal in $X$ to show that $B$ is a band it is enough to prove that
	if $B_+\ni b_\alpha\uparrow x\in X$, then $x\in B$. That is easy, since $p(b_\alpha)\uparrow p(x)$ 
	as $X$ is a $p$-Fatou space. By $o$-continuity of lattice operations in $E$, we obtain that
	$$
	0=p(b_\alpha)\wedge p(m)\xrightarrow{o} p(x)\wedge p(m) \ \ \ \ (\forall m \in M).
	$$
	Therefore, $p(x)\wedge p(m)=0$ for all $m\in M$, and hence $x\in B$. 
\end{proof}

In connection with Proposition \ref{bands} and Example \ref{ExLNVL_1}, the following open problem arises.

\begin{problem}
	Let $(X,p,E)$ be a decomposable LNVL $p$-Fatou space in which every $p$-band is a band. 
	Is $X$ a $p$-Fatou space?
\end{problem}

\subsection{$p$-Density and $p$-units} 
In the present paper, we use the following definition of a $p$-dense subset in an LNS, which is motivated by the notion of a dense subset of a normed space. 

\begin{definition}
	Given an LNS $(X,p,E)$ and $A\subseteq X$. A subset $B\subseteq A$ is said to be {\em $p$-dense in $A$} 
	if, for any $a\in A$ and for any $0\ne u\in p(X)$ there is $b\in B$ such that $p(a-b)\le u$.
\end{definition} 

\begin{remark}\ \
	\begin{enumerate}
		\item Consider the LNVL $(X,p,E)$ with $p=\lvert\cdot\rvert$, $E=X$, and let $Y$ be a sublattice $X$. If $Y$ is $p$-dense in $X$, then $Y$ is order dense. Indeed, let $0\neq x\in X_+$, 
		then there is $y\in Y$ such that $|y-\frac{1}{2}x|\leq\frac{1}{3}x$ which implies $0<\frac{1}{6}x\leq y\leq\frac{5}{6}x$, and so $0<y\leq x$.
		
		\item $c$ is order dense, yet is not $p$-dense in both of the following LNVLs: $(\ell_\infty,\lVert\cdot\rVert_\infty,{\mathbb R})$ and $(\ell_\infty,\lvert\cdot\rvert,\ell_\infty)$.
		
		\item If $X=(X,\lVert\cdot\rVert)$ is a normed lattice, $p=\lVert\cdot\rVert$ and $E=\mathbb{R}$, then clearly a subset $Y$ of $X$ is $p$-dense iff $Y$ is norm dense.
	\end{enumerate}
\end{remark}

The following notion is motivated by the notion of a weak order unit in a vector lattice $X=(X,\lvert\cdot\rvert,X)$ and by the notion of a quasi-interior point in 
a normed lattice $X=(X,\lVert\cdot\rVert,{\mathbb R})$

\begin{definition}\label{$p$-unit}
	Let $(X,p,E)$ be an LNVL. A vector $e\in X$ is called a {\em $p$-unit} if, for any $x\in X_+$ we have $p(x-x\wedge ne)\xrightarrow{o}0$.
\end{definition}

\begin{remark}\label{properties of $p$-units}
	Let $(X,p,E)$ be an LNVL.
	\begin{enumerate}
		\item If $X\neq\{0\}$ then, for any $p$-unit $e$ in $X$ it holds that $e>0$. 
		Indeed, let $e$ be a $p$-unit in $X\ne\{0\}$. Trivially, $e\ne 0$. Suppose $e^->0$. Then, for $x:=e^-$, we obtain that
		$$
		p(x-x\wedge ne)=p(e^--(e^-\wedge n(e^+-e^-)))=
		$$
		$$  
		p(e^--(e^-\wedge n(-e^-)))=p(e^--(-ne^-))=p((n+1)e^-)=
		$$
		$$  
		(n+1)p(e^-)\not\xrightarrow{o}0  
		$$
		as $n\to\infty$. This is impossible, because $e$ is a $p$-unit. Therefore, $e^-=0$ and $e>0$. 
		
		\item Let $e\in X$ be a $p$-unit. Given $0<\alpha\in\mathbb{R}_+$ and $z\in X_+$. Observe that, for $x \in X_+$, 
		$p(x-n\alpha e\wedge x)=\alpha p(\frac{x}{\alpha}-ne\wedge\frac{x}{\alpha})$ and 
		$p(x-n(e+z)\wedge x)\leq p(x-x\wedge ne)$, from which it follows easily that $\alpha e$ and $e+z$ are both $p$-units.
		
		\item If $e\in X$ is a strong unit, then $e$ is a $p$-unit. Indeed, let $x\in X_+$, then there is $k\in\mathbb{N}$ such that $x\leq ke$, so $x-x\wedge ne=0$ for any $n\geq k$.
		
		\item If $e\in X$ is a $p$-unit, then $e$ is a weak unit. Assume $x\wedge e =0$, then $x\wedge ne =0$ for any $n\in\mathbb{N}$. Since $e$ is a $p$-unit, then $p(x)=0$ and hence $x=0$.
		
		\item If $X$ is $op$-continuous, then clearly every weak unit of $X$ is a $p$-unit.
		
		\item In $X=(X,\lvert\cdot\rvert,X)$, the lattice norm $p(x)=|x|$ is always order continuous. Therefore, the notions of $p$-unit and of weak unit coincide in $X$.
		
		\item If $X=(X,\lVert\cdot\rVert)$ is a normed lattice, $p=\lVert\cdot\rVert$, $E=\mathbb{R}$, and $e\in X$, then $e$ is a $p$-unit iff $e$ is a quasi-interior point of $X$.
	\end{enumerate}
\end{remark}

In the proof of the following proposition, we use the same technique as in the proof of \cite[Lm.4.15]{AA}.

\begin{proposition}
	Let $(X,p,E)$ be an LNVL, $e\in X_+$, and $I_e$ be the order ideal generated by $e$ in $X$. 
	If $I_e$ is $p$-dense in $X$, then $e$ is a $p$-unit.
\end{proposition}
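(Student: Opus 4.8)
The plan is to unwind the definition of $p$-unit directly from the $p$-density hypothesis. Fix $x\in X_+$; I must show $p(x - x\wedge ne)\xrightarrow{o} 0$ as $n\to\infty$. Since this is an order-null claim in $E$, the natural route is to produce, for a suitable decreasing-to-zero net in $p(X)$, a tail of the sequence $(p(x-x\wedge ne))_n$ dominated by each member of that net. Given $0\neq u\in p(X)$, I would apply $p$-density of $I_e$ in $X$ to the point $x$: there is $y\in I_e$ with $p(x-y)\le u$. Replacing $y$ by $y^+\wedge x$ (which is still in $I_e$, since $I_e$ is an ideal, and still satisfies $p(x-y^+\wedge x)\le p(x-y)\le u$ by monotonicity of $p$ together with the pointwise estimate $|x - y^+\wedge x|\le |x-y|$), I may assume $0\le y\le x$ and $y\in I_e$. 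Then $y\le ke$ for some $k\in\mathbb N$, so for $n\ge k$ we get $y = y\wedge ne \le x\wedge ne \le x$, whence
\[
0\le x - x\wedge ne \le x - y,
\]
and therefore $p(x - x\wedge ne)\le p(x-y)\le u$ for all $n\ge k$.

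The remaining step is to convert ``for every $0\neq u\in p(X)$, eventually $p(x-x\wedge ne)\le u$'' into genuine order convergence to $0$. The sequence $a_n := p(x-x\wedge ne)$ is decreasing (since $x\wedge ne$ increases), so I need only show $\inf_n a_n = 0$ in $E$. If $0\le w\le a_n$ for all $n$, then for the element $u := a_1 + w$, which lies in $p(X) + E_+$; here one must be slightly careful, since $p$-density only gives the estimate for $u\in p(X)$. I would handle this by noting that it suffices to have, for a cofinal family of $u$'s in $p(X)$ that order-converges to $0$, the eventual domination $a_n\le u$; concretely, the hypothesis gives $a_n\le u$ eventually for every $0\ne u\in p(X)$, so any lower bound $w$ of $(a_n)$ satisfies $0\le w\le u$ for every such $u$, forcing $w\le u$ for all $u\in p(X)\setminus\{0\}$. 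Since $p(X)$ generates the ideal it spans and in particular contains arbitrarily small positive elements relative to any fixed nonzero member (scale: $\tfrac1k p(x)\downarrow$ need not tend to $0$ in general — this is the subtle point), I would instead argue: $a_1\in p(X)$, and for each $k$, $\tfrac1k a_1\in p(X)$, so $0\le w\le \tfrac1k a_1$ for all $k$, hence $w=0$ by the Archimedean property of $E$. Thus $a_n\downarrow 0$, i.e. $x - x\wedge ne\xrightarrow{p}0$, and since $e\in X_+$ was fixed and $x\in X_+$ arbitrary, $e$ is a $p$-unit.

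The main obstacle I anticipate is precisely the last bookkeeping: the definition of $p$-density quantifies over $u\in p(X)$, not over all of $E_+$, so one cannot immediately feed a lower bound $w$ of the sequence back into the density statement. The clean fix is the Archimedean trick above — use the scalar multiples $\tfrac1k a_1$ of the single element $a_1 = p(x-x\wedge e)\in p(X)$, which are all in $p(X)$, get $0\le w\le \tfrac1k a_1$ for every $k\in\mathbb N$ from the eventual-domination property, and conclude $w=0$ since $X$ (hence $E$, or at least the relevant subspace) is Archimedean. One should also double-check at the outset that one may indeed reduce to $0\le y\le x$; the inequality $|x-y^+\wedge x|\le|x-y|$ is the standard two-sided lattice estimate and monotonicity of $p$ does the rest. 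With those two small points settled, the argument is short and follows the template of \cite[Lm.4.15]{AA} as advertised.
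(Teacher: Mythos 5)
Your argument is correct and essentially identical to the paper's: both apply $p$-density to $x$, replace $y$ by $y^+\wedge x$ via the estimate $|x-y^+\wedge x|\le|x-y|$ to reduce to $0\le y\le ke$, and deduce $p(x-x\wedge ne)\le p(x-y)$ for $n\ge k$. The only (cosmetic) difference is where the Archimedean scaling enters --- the paper feeds $\frac{1}{m}u$ directly into the density hypothesis to produce a sequence $y_m$ and the dominating net $\frac{1}{m}u\downarrow 0$, whereas you fix $u$ and invoke Archimedeanness of $E$ on $\frac{1}{k}a_1$ at the end; these are equivalent.
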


\begin{proof}
	Let $0\neq u\in p(X)$. Let $x\in X_+$, then there exists $y\in I_e$ such that $p(x-y)\leq u$. 
	Since $|y^+\wedge x-x|\leq |y^+-x|=|y^+-x^+|\leq |y-x|$, then, by replacing $y$ by $y^+\wedge x$, 
	we may assume without loss of generality that there is $y\in I_e$ such that $0\leq y\leq x$ and $p(x-y)\leq u$. 
	Thus, for any $m\in\mathbb{N}$, there is $y_m\in I_e$ such that $0\leq y_m\leq x$ and 
	$$
	p(x-y_m) \leq \frac{1}{m}u.
	$$
	Since $y_m\in I_e$, then there exists $k=k(m)\in\mathbb{N}$ such that $0\leq y_m\leq ke$, and so $0\leq y_m\leq ke\wedge x$. 
	
	For $n\geq k$, $x-x\wedge ne\leq x-x\wedge ke\leq x-y_m$, and so $p(x-x\wedge ne)\leq p(x-y_m)\leq\frac{1}{m}u$. 
	Hence $p(x-x\wedge ne)\xrightarrow{o}0$. Thus $e$ is a $p$-unit.
\end{proof}

\section{Unbounded $p$-convergence}
The {\em $up$-convergence} in LNVLs generalizes the $uo$-convergence in vector lattices 
\cite{GTX,G,GX}, the $un$-convergence \cite{DOT} and the $uaw$-convergence \cite{Z} in Banach lattices. 
We study basic properties of the $up$-convergence and characterize the $up$-convergence in certain LNVLs. 

\subsection{Main definition and its motivation} 
Let $(X,p,E)$ be an LNVL. The following definition is motivated by its special case when it is reduced to the $un$-convergence for a
normed lattice $(X,p,E)=(X,\|\cdot\|,{\mathbb R})=(X,\|\cdot\|)$.

\begin{definition}\label{$up$-convergence}
	A net $x_\alpha\subseteq X$ is said to be unbounded $p$-convergent to $x\in X$ $($shortly, $x_\alpha$ $up$-converges to $x$ or  
	$x_\alpha\xrightarrow{up}x$$)$, if
	$$
	p(|x_\alpha-x|\wedge u)\xrightarrow{o}0 \ \ \ (\forall u\in X_+).
	$$
\end{definition}

It is immediate to see that $up$-convergence coincides with $un$-convergence in the case when $p$ is the norm in a normed lattice, and 
with $uo$-convergence in the case when $X=E$ and $p(x)=|x|$. It is clear that $x_{\alpha}\xrightarrow[]{p} x$ implies $x_\alpha\xrightarrow{up}x$, and for order bounded nets
$up$-convergence and $p$-convergence agree. It should be also clear that, if an LNVL $X$ is $op$-continuous, then $uo$-convergence in $X$ implies $up$-convergence. 
The $uaw$-convergence is also a particular case of $up$-convergence as it follows from the next proposition.

\begin{proposition}\label{Zabeti}
	In the notation of Example \ref{ExLNVL_5}, $x_\alpha\xrightarrow{up}0$ in $X$ iff for every $u\in X_+$, $|x_\alpha|\wedge u\xrightarrow{|\sigma|(X,Y)}0$.
\end{proposition}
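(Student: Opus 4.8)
The plan is to translate both sides into coordinatewise statements about the net $p(|x_\alpha|\wedge u)\in{\mathbb R}^Y$, and then exploit that ${\mathbb R}^Y$ is Dedekind complete with coordinatewise suprema and infima. For a fixed $u\in X_+$ write $g_\alpha:=p(|x_\alpha|\wedge u)$, so that $g_\alpha(f)=|f|(|x_\alpha|\wedge u)$ for $f\in Y$. By definition, $x_\alpha\xrightarrow{up}0$ says $g_\alpha\xrightarrow{o}0$ in ${\mathbb R}^Y$ for every $u\in X_+$, whereas $|x_\alpha|\wedge u\xrightarrow{|\sigma|(X,Y)}0$ says $g_\alpha(f)\to 0$ in ${\mathbb R}$ for every $f\in Y$. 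Thus the whole statement reduces, for each fixed $u$, to: $g_\alpha\xrightarrow{o}0$ in ${\mathbb R}^Y$ if and only if $g_\alpha\to 0$ coordinatewise.

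For the forward implication I would simply note that order convergence in a product vector lattice descends to each coordinate: if $g_\alpha\xrightarrow{o}0$, choose $h_\beta\downarrow 0$ in ${\mathbb R}^Y$ with $|g_\alpha|\le h_\beta$ eventually; evaluating at $f\in Y$ gives $h_\beta(f)\downarrow 0$ and $|g_\alpha(f)|\le h_\beta(f)$ eventually, hence $g_\alpha(f)\to 0$. Since $u\in X_+$ was arbitrary, $|x_\alpha|\wedge u\xrightarrow{|\sigma|(X,Y)}0$ for all $u$.

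For the converse, fix $u\in X_+$ and observe that $0\le g_\alpha\le p(u)$ holds coordinatewise, so the net $(g_\alpha)$ is order bounded in the Dedekind complete lattice ${\mathbb R}^Y$. Hence $h_\alpha:=\sup_{\gamma\ge\alpha}g_\gamma$ exists, $h_\alpha\downarrow$, and $h:=\inf_\alpha h_\alpha\ge 0$ exists; since suprema and infima in ${\mathbb R}^Y$ are computed coordinatewise, $h(f)=\inf_\alpha\sup_{\gamma\ge\alpha}g_\gamma(f)=\limsup_\gamma g_\gamma(f)=0$ for every $f\in Y$ by hypothesis, so $h=0$, i.e. $h_\alpha\downarrow 0$. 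As $0\le g_\alpha\le h_\alpha\le h_\beta$ for all $\alpha\ge\beta$, we conclude $g_\alpha\xrightarrow{o}0$. Since $u$ was arbitrary, $x_\alpha\xrightarrow{up}0$.

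The one genuine subtlety — and the step I would flag as the crux — is the converse direction: coordinatewise convergence in ${\mathbb R}^Y$ does \emph{not} imply order convergence for an arbitrary net, so it is essential to use both the Dedekind completeness of ${\mathbb R}^Y$ and the uniform coordinate bound $g_\alpha\le p(u)$, which together allow one to form $\sup_{\gamma\ge\alpha}g_\gamma$ and thereby reduce order convergence to the vanishing of the coordinatewise $\limsup$. The remaining arguments are routine unwinding of the definitions.
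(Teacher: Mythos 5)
Your proof is correct and follows essentially the same route as the paper: both reduce the statement to the equivalence, for each fixed $u\in X_+$, between order convergence of $p(|x_\alpha|\wedge u)$ in ${\mathbb R}^Y$ and its coordinatewise convergence. The paper simply asserts this equivalence in a chain of ``iff''s, whereas you supply the justification (the bound $0\le p(|x_\alpha|\wedge u)\le p(u)$ plus coordinatewise suprema in the Dedekind complete lattice ${\mathbb R}^Y$), which is a worthwhile detail but not a different approach.
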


\begin{proof} 
	$x_\alpha \xrightarrow{up}0$ in $X$ iff for all $u\in X_+$, $p(|x_{\alpha}|\wedge u)\xrightarrow{o}0$ in $E$ 
	iff for every $u\in X_+$, $p(|x_\alpha|\wedge u)[y]\to 0$ for all $y\in Y$ iff for every $u\in X_+$, $|y|(|x_\alpha|\wedge u)\to 0$ for all $y\in Y$
	iff for every $u\in X_+$, $|x_\alpha|\wedge u\xrightarrow{|\sigma|(X,Y)}0$. 
\end{proof}

In particular, if $X$ is a Banach lattice, $Y=X^*$, the topological dual of $X$, $E = {\mathbb R}^Y$ and $p: X \to E$ as defined above,
then $x_\alpha \xrightarrow{up}0$  in $X$ iff $x_\alpha \xrightarrow{uaw} 0$.

\subsection{Basic results on $up$-convergence}
We begin with the next list of properties of $up$-convergence which follows directly from Lemma \ref{LO are $p$-continuous}.

\begin{lemma}
	Let $x_\alpha\xrightarrow{up}x$ and $y_\alpha\xrightarrow{up}y$ in an LNVL $(X,p,E)$, then$:$
	
	$(i)$\ \ \ $ax_\alpha+by_\alpha\xrightarrow{up} ax+by$ for any $a,b\in{\mathbb R}$$,$
	in particular, if $x_\alpha=y_\alpha$, then $x=y$$;$
	
	$(ii)$\ \  $x_{\alpha_\beta}\xrightarrow{up}x$ for any subnet $x_{\alpha_\beta}$ of $x_\alpha$$;$
	
	$(iii)$\   $|x_\alpha|\xrightarrow{up} |x|$$;$
	
	$(iv)$\ \  if $x_\alpha\ge y_\alpha$ for all $\alpha$, then $x\ge y$.
\end{lemma}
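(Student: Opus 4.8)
The plan is to derive all four statements from Lemma~\ref{LO are $p$-continuous} together with the elementary lattice inequalities that relate truncations of sums to sums of truncations. The key observation throughout is that $up$-convergence is tested against every $u\in X_+$, so the game is always to bound $p(|\,\cdot\,|\wedge u)$ for the combined net by a sum of terms each of which is $p(|\,\cdot\,|\wedge u')$ for a suitable $u'$, and then invoke that $(o)$-limits respect finite sums.

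For $(i)$, I would first treat scalar multiples: from $|ax_\alpha-ax|\wedge u=|a|\bigl(|x_\alpha-x|\wedge\tfrac{u}{|a|}\bigr)$ for $a\neq 0$ (and triviality for $a=0$), monotonicity of $p$ gives $ax_\alpha\xrightarrow{up}ax$. For sums, use the standard inequality $|{(x_\alpha+y_\alpha)-(x+y)}|\wedge u\le\bigl(|x_\alpha-x|\wedge u\bigr)+\bigl(|y_\alpha-y|\wedge u\bigr)$, apply the subadditive monotone $p$, and note the right side $(o)$-converges to $0$. Combining the two yields $ax_\alpha+by_\alpha\xrightarrow{up}ax+by$; taking $x_\alpha=y_\alpha$, $a=1$, $b=-1$ forces $p(|x-y|\wedge u)=0$ for all $u\in X_+$, hence $|x-y|\wedge u=0$ since $p$ is a vector norm, and taking $u=|x-y|$ gives $x=y$ (uniqueness of the $up$-limit).

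Statement $(ii)$ is immediate: if $x_{\alpha_\beta}$ is a subnet of $x_\alpha$, then $p(|x_{\alpha_\beta}-x|\wedge u)$ is a subnet of $p(|x_\alpha-x|\wedge u)$, and a subnet of an $(o)$-null net is $(o)$-null. For $(iii)$, use $\bigl|\,|x_\alpha|-|x|\,\bigr|\wedge u\le |x_\alpha-x|\wedge u$ and monotonicity of $p$. For $(iv)$, from $x_\alpha\ge y_\alpha$ one has $(y_\alpha-x_\alpha)^+=0$, so by $(i)$ the net $y_\alpha-x_\alpha$ $up$-converges to $y-x$, and then by Lemma~\ref{LO are $p$-continuous}-style reasoning (or rather its $up$-analogue, which I would record as part of $(iii)$: $up$-convergence is preserved under the positive-part map, since $|z_\alpha^+-z^+|\wedge u\le|z_\alpha-z|\wedge u$) we get $(y_\alpha-x_\alpha)^+\xrightarrow{up}(y-x)^+$; but $(y_\alpha-x_\alpha)^+=0$ for all $\alpha$, so $(y-x)^+=0$ by uniqueness, i.e.\ $x\ge y$.

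I do not expect a genuine obstacle here; the only point requiring a little care is making sure each truncation inequality is stated in the correct direction and that, when rescaling $u$ by $1/|a|$, one still ranges over all of $X_+$ so that the conclusion is the full $up$-convergence rather than a weaker variant. The proof is therefore essentially a bookkeeping exercise in lattice inequalities plus the additivity of order limits, exactly as the sentence preceding the lemma promises.
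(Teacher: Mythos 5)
Your proposal is correct, and it fills in exactly the elementary bookkeeping that the paper leaves implicit (the paper simply asserts the lemma ``follows directly'' from the $p$-continuity of the lattice operations). All the inequalities you use --- $|a|w\wedge u=|a|(w\wedge u/|a|)$, $(v+w)\wedge u\le v\wedge u+w\wedge u$ for positive elements, $\bigl|\,|x_\alpha|-|x|\,\bigr|\le|x_\alpha-x|$, and $|z_\alpha^+-z^+|\le|z_\alpha-z|$ --- are valid and combine with the monotonicity and subadditivity of $p$ exactly as you describe.
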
 

\begin{lemma}\label{$up$-sup-inf}
	Let $x_\alpha$ be a monotone net in an LNVL $(X,p,E)$ such that $x_\alpha\xrightarrow[]{up}x$, then $x_\alpha\xrightarrow[]{o}x$.
\end{lemma}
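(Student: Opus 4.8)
The plan is to reduce the statement to Proposition~\ref{$p$-sup-inf}, which already says that a monotone $p$-convergent net $o$-converges to its $p$-limit. So it suffices to upgrade the hypothesis $x_\alpha\xrightarrow{up}x$ to $x_\alpha\xrightarrow{p}x$ using monotonicity. By symmetry (replacing $x_\alpha$ by $-x_\alpha$ and $x$ by $-x$) I may assume $x_\alpha\uparrow$. The first step is to show $x\ge x_\alpha$ for every $\alpha$: fix $\alpha_0$; for $\alpha\ge\alpha_0$ the net $(x_\alpha-x_{\alpha_0})_{\alpha\ge\alpha_0}$ lies in $X_+$ and $up$-converges to $x-x_{\alpha_0}$, so by part $(iv)$ of the preceding Lemma (with $y_\alpha=0$) we get $x-x_{\alpha_0}\ge 0$. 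Hence $x$ is an upper bound of $(x_\alpha)$, and in particular $0\le x-x_\alpha$ for all $\alpha$, with $(x-x_\alpha)\downarrow$.

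The key step is then: since $x_\alpha\uparrow\le x$, the tail differences are order bounded, namely $0\le x-x_\alpha\le x-x_{\alpha_0}$ for $\alpha\ge\alpha_0$, so the net $(x-x_\alpha)$ is order bounded. For an order bounded net, $up$-convergence and $p$-convergence coincide — this is remarked in the excerpt right after Definition~\ref{$up$-convergence} (``for order bounded nets $up$-convergence and $p$-convergence agree''). Concretely, $x_\alpha\xrightarrow{up}x$ means $p(|x_\alpha-x|\wedge u)\xrightarrow{o}0$ for every $u\in X_+$; taking $u=x-x_{\alpha_0}$ (a legitimate choice since it lies in $X_+$) and using that $|x_\alpha-x|=x-x_\alpha\le x-x_{\alpha_0}$ for $\alpha\ge\alpha_0$, we get $|x_\alpha-x|\wedge u=x-x_\alpha$ eventually, hence $p(x-x_\alpha)\xrightarrow{o}0$ along the tail $\alpha\ge\alpha_0$, i.e. $x_\alpha\xrightarrow{p}x$.

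Finally, apply Proposition~\ref{$p$-sup-inf}: $x_\alpha\uparrow$ together with $x_\alpha\xrightarrow{p}x$ gives $x_\alpha\uparrow x$, and in particular $x_\alpha\xrightarrow{o}x$, which is the claim. I expect the only mild subtlety to be the bookkeeping with tails: the choice of the test vector $u$ depends on a fixed $\alpha_0$, and one must note that $p$-convergence (and $o$-convergence) along a tail of a directed set is the same as along the whole net, so this causes no real difficulty. The genuinely substantive ingredient is that one is \emph{allowed} to plug a specific, cleverly chosen $u$ into the definition of $up$-convergence; everything else is routine lattice manipulation invoking results already established above.
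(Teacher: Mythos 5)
Your proof is correct, but it takes a different route from the paper's. The paper disposes of this lemma in one line --- ``the proof of Proposition~\ref{$p$-sup-inf} is applicable here as well'' --- meaning that both halves of that supremum argument are simply re-run with $up$-limits in place of $p$-limits: one shows $x-x_{\alpha_0}\ge 0$ exactly as you do, and then, for any upper bound $y$ of the net, that $y-x_\alpha\xrightarrow{up}y-x\in X_+$, so $x=\sup_\alpha x_\alpha$ and hence $x_\alpha\xrightarrow{o}x$; no $p$-convergence is ever established. You instead keep only the first half (that $x$ is an upper bound), then use the order bound $0\le x-x_\alpha\le x-x_{\alpha_0}$ on the tail to choose the test vector $u=x-x_{\alpha_0}$ and upgrade $up$-convergence to genuine $p$-convergence, after which Proposition~\ref{$p$-sup-inf} is invoked as a black box. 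Your tail bookkeeping is fine (a tail is cofinal and $o$-convergence along a tail is $o$-convergence of the whole net), and your use of part $(iv)$ of the preceding lemma is legitimate and non-circular. The trade-off: your argument is a bit longer, but it proves strictly more --- the intermediate conclusion $x_\alpha\xrightarrow{p}x$ is exactly the content of the theorem that immediately follows this lemma in the paper (which the paper proves separately by first applying this lemma and then testing with $u=x$), so your single argument subsumes both results, at the cost of not noticing that the bare order-convergence claim needs nothing beyond the closedness of $X_+$ under $up$-limits.
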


\begin{proof}
	The proof of Proposition \ref{$p$-sup-inf} is applicable here as well.
\end{proof}

The following result is a $p$-generalization of \cite[Lm.1.2 (ii)]{KMT}.

\begin{theorem}
	Let $x_\alpha$ be a monotone net in an LNVL $(X,p,E)$ which $up$-converges to $x$. Then $x_\alpha\xrightarrow[]{p}x$.
\end{theorem}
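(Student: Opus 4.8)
The plan is to reduce the statement to Lemma \ref{$up$-sup-inf} by showing that a monotone $up$-convergent net $x_\alpha$ is automatically $p$-convergent to the \emph{same} limit; then, since $x_\alpha \xrightarrow{p} x$ trivially implies $x_\alpha \xrightarrow{up} x$, the two limits must agree by uniqueness of $up$-limits (Lemma, part $(i)$). So the real content is: a monotone $up$-convergent net is $p$-convergent. Without loss of generality I would assume $x_\alpha \uparrow$ (the decreasing case follows by passing to $-x_\alpha$). By Lemma \ref{$up$-sup-inf} we already know $x_\alpha \xrightarrow{o} x$, hence $x_\alpha \uparrow x$, so in particular $x - x_\alpha \downarrow 0$ and $0 \le x - x_\alpha \le x - x_{\alpha_0}$ for $\alpha \ge \alpha_0$. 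The goal is to upgrade this order convergence to $p(x - x_\alpha) \xrightarrow{o} 0$.

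The key step is to exploit the $up$-hypothesis with the specific test vector $u = x - x_{\alpha_0}$ for each fixed $\alpha_0$. For $\alpha \ge \alpha_0$ we have $|x_\alpha - x| = x - x_\alpha \le x - x_{\alpha_0}$, hence
\[
|x_\alpha - x| \wedge (x - x_{\alpha_0}) = x - x_\alpha \qquad (\alpha \ge \alpha_0).
\]
Applying $up$-convergence at $u = x - x_{\alpha_0}$ gives $p\big(|x_\alpha - x| \wedge (x - x_{\alpha_0})\big) \xrightarrow{o} 0$, and since the eventual tail of this net (over $\alpha \ge \alpha_0$) coincides with $p(x - x_\alpha)$, we conclude $p(x - x_\alpha) \xrightarrow{o} 0$ along $\alpha \ge \alpha_0$, hence along the whole net. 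That is exactly $x_\alpha \xrightarrow{p} x$, and we are done.

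The main obstacle — really the only subtlety — is the order-convergence bookkeeping: one must be careful that "$p(|x_\alpha - x|\wedge u)\xrightarrow{o}0$ for the net indexed by all $\alpha$" genuinely forces "$p(x-x_\alpha)\xrightarrow{o}0$", using that the two nets agree on a tail $\{\alpha : \alpha \ge \alpha_0\}$, which is cofinal, and that $o$-convergence is unaffected by restricting to a cofinal tail. One also needs to invoke Lemma \ref{$up$-sup-inf} to get the identification of the $up$-limit $x$ with the order-limit $\sup_\alpha x_\alpha$ first, since otherwise the inequality $|x_\alpha - x| \le x - x_{\alpha_0}$ is not available. No decomposability or $op$-continuity of $p$ is needed; the argument is purely order-theoretic once the right test vector is chosen.
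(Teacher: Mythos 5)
Your proposal is correct and follows essentially the same route as the paper: reduce to an increasing net, invoke Lemma \ref{$up$-sup-inf} to get $x_\alpha\uparrow x$, and then test $up$-convergence against a single positive vector dominating the tail of $x-x_\alpha$ so that the infimum with $u$ becomes $x-x_\alpha$ itself. The only cosmetic difference is that the paper first normalizes to $0\le x_\alpha\uparrow$ and takes $u=x$, whereas you keep the net as is and take $u=x-x_{\alpha_0}$ on the tail $\alpha\ge\alpha_0$.
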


\begin{proof}
	Without loss of generality we may assume that $0\le x_\alpha\uparrow$ \ . From Lemma \ref{$up$-sup-inf} it follows that 
	$0\le x_\alpha\uparrow x$ for some $x\in X$. So $0\leq x-x_\alpha\leq x$ for all $\alpha$. 
	Since, for each $u\in X_+$, we know that
	$$
	p((x-x_\alpha)\wedge u)\xrightarrow[]{o} 0.
	$$ 
	In particular, for $u=x$, we obtain that
	$$
	p(x-x_\alpha)=p((x_\alpha-x)\wedge x)\xrightarrow[]{o} 0.
	$$
\end{proof}	

Similar to \cite[Lm.1.2.(1)]{G} we have that if $x_{\alpha}\xrightarrow[]{up} 0$ in an LNVL $(X,p,E)$, then $\inf_{\beta}|y_{\beta}|=0$ for any subnet $y_{\beta}$ of 
the net $x_{\alpha}$. Indeed, let $y_{\beta}$ be a subnet of $x_{\alpha}$. Clearly, $y_\beta\xrightarrow[]{up} 0$. 
If $0\leq z\leq |y_\beta|$ for all $\beta$, then $p(z)=p(z\wedge |y_\beta|)\xrightarrow[]{o} 0$, and so $z=0$. Hence $\inf_{\beta}|y_{\beta}|=0$.\\

The following two results, which are analogies of Lemma 2.8 in \cite{DOT} and of Lemma 3.6 in \cite{GX}, we have respectively. 

\begin{lemma}
	Let $(X,p,E)$ be an LNVL. Assume that $E$ is order complete and $x_\alpha\xrightarrow[]{up} x$, then $p(|x|-|x|\wedge|x_\alpha|)\xrightarrow[]{o} 0$ and 
	$p(x)=\liminf_\alpha p(|x|\wedge |x_\alpha|)$. Moreover, if $x_\alpha$ is $p$-bounded, then $p(x)\leq\liminf_{\alpha}p(x_\alpha)$.
\end{lemma}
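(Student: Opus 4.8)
The plan is to mimic the proof of the normed-lattice analogue (Lemma 2.8 in \cite{DOT}) while replacing the real-valued norm by the $E$-valued norm $p$ and using order completeness of $E$ to make sense of $\liminf$. First I would observe that since $x_\alpha\xrightarrow{up}x$, applying the definition with $u=|x|$ gives $p(|x_\alpha-x|\wedge|x|)\xrightarrow{o}0$. The key pointwise lattice inequality is
\begin{equation*}
\bigl|\,|x|-|x|\wedge|x_\alpha|\,\bigr|=|x|-|x|\wedge|x_\alpha|\le |x|\wedge|x_\alpha-x|,
\end{equation*}
which one checks from $|x|-|x|\wedge|x_\alpha|=(|x|-|x_\alpha|)^+\le\bigl||x|-|x_\alpha|\bigr|\wedge|x|\le|x-x_\alpha|\wedge|x|$. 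Monotonicity of $p$ then yields $p\bigl(|x|-|x|\wedge|x_\alpha|\bigr)\le p(|x-x_\alpha|\wedge|x|)\xrightarrow{o}0$, giving the first claim.

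For the second claim, from $p(|x|-|x|\wedge|x_\alpha|)\xrightarrow{o}0$ together with $0\le p(|x|\wedge|x_\alpha|)\le p(|x|)=p(x)$, I would argue that $p(|x|\wedge|x_\alpha|)\xrightarrow{o}p(x)$ in $E$: indeed the difference $p(x)-p(|x|\wedge|x_\alpha|)$ is dominated in absolute value by $p(|x|-|x|\wedge|x_\alpha|)$ (using the triangle inequality $|p(a)-p(b)|\le p(a-b)$ valid in any LNS, applied to $a=|x|$, $b=|x|\wedge|x_\alpha|$) which $o$-converges to $0$. Since $o$-convergence of a net in an order complete vector lattice forces the net to be eventually order bounded and to have $\liminf$ equal to the limit, we get $p(x)=\liminf_\alpha p(|x|\wedge|x_\alpha|)$; here order completeness of $E$ is exactly what guarantees the relevant suprema and infima exist so that $\liminf$ is well defined.

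For the last assertion, suppose $x_\alpha$ is $p$-bounded, say $p(x_\alpha)\le e$ for all $\alpha$. Then $p(|x|\wedge|x_\alpha|)\le p(x_\alpha)$ for each $\alpha$ by monotonicity of $p$, so taking $\liminf_\alpha$ of both sides (which is legitimate in the order complete lattice $E$, and $p$-boundedness ensures $\liminf_\alpha p(x_\alpha)$ exists) gives
\begin{equation*}
p(x)=\liminf_\alpha p(|x|\wedge|x_\alpha|)\le\liminf_\alpha p(x_\alpha).
\end{equation*}

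The main obstacle I anticipate is being careful about the order-theoretic meaning of $\liminf$ for nets in $E$ and verifying that all the manipulations — monotonicity of $\liminf$, the identity ``$o$-limit equals $\liminf$'' — are valid under just order completeness of $E$ rather than, say, the existence of all relevant bounds a priori; this is where the hypothesis on $E$ and the $p$-boundedness hypothesis must be invoked precisely. The lattice inequalities themselves are routine, and the triangle inequality $|p(a)-p(b)|\le p(a-b)$ in an LNS is standard, so the real work is bookkeeping with $\liminf$ in $E$.
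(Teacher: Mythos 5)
Your proof is correct and follows essentially the same route as the paper's: the same key inequality $|x|-|x|\wedge|x_\alpha|\le|x_\alpha-x|\wedge|x|$ (you derive it via the positive-part identity, the paper via $|a\wedge c-b\wedge c|\le|a-b|\wedge c$), followed by the triangle inequality for $p$ and the sandwich $0\le p(|x|\wedge|x_\alpha|)\le p(x)$ to identify the $\liminf$. The only cosmetic difference is that you prove the slightly stronger statement $p(|x|\wedge|x_\alpha|)\xrightarrow{o}p(x)$ before reading off the $\liminf$, whereas the paper works with the $\liminf$ directly.
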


\begin{proof}
	Note that
	$$
	|x|-|x|\wedge |x_\alpha|=|\,|x_\alpha|\wedge |x|-|x|\wedge |x|\,|\leq |\,|x_\alpha|-|x|\,|\wedge |x|\leq |x_\alpha-x|\wedge |x|.
	$$
	Since $x_\alpha\xrightarrow[]{up} x$, we get $p(|x|-|x|\wedge |x_\alpha|)\xrightarrow[]{o} 0$. Thus 
	$$
	p(x)=p(|x|)\leq p(|x|-|x|\wedge |x_\alpha|)+p(|x|\wedge |x_\alpha|).
	$$ 
	So $p(x)\leq\liminf_\alpha p(|x|\wedge |x_\alpha|)$. Hence $p(x)=\liminf_\alpha p(|x|\wedge |x_\alpha|)$.
\end{proof}

\begin{lemma}
	Let $(X,p,E)$ be an $op$-continuous LNVL. Assume that $E$ is order complete and $x_\alpha\xrightarrow[]{uo} x$, then $p(|x|-|x|\wedge|x_\alpha|)\xrightarrow[]{o} 0$ 
	and $p(x)=\liminf_\alpha p(|x|\wedge|x_\alpha|)$. Moreover, if $x_\alpha$ is $p$-bounded, then $p(x)\leq\liminf_{\alpha}p(x_\alpha)$.
\end{lemma}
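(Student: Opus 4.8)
The plan is to reduce the statement for $uo$-convergence to the previous lemma (the one for $up$-convergence) by exploiting $op$-continuity. The key observation is that, exactly as in the proof of the preceding lemma, we have the pointwise estimate
$$
|x| - |x|\wedge|x_\alpha| = \bigl|\,|x_\alpha|\wedge|x| - |x|\wedge|x|\,\bigr| \le \bigl|\,|x_\alpha|-|x|\,\bigr|\wedge|x| \le |x_\alpha - x|\wedge|x|,
$$
which holds in any vector lattice and does not involve $p$ at all. So first I would record this inequality. The difference from the previous lemma is only that now we are told $x_\alpha \xrightarrow{uo} x$ rather than $x_\alpha \xrightarrow{up} x$; the whole point of assuming $op$-continuity is to recover the needed $p$-statement from the order statement.

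Next I would argue as follows. By definition of $uo$-convergence, $|x_\alpha - x|\wedge|x| \xrightarrow{o} 0$ in $X$. Since $(X,p,E)$ is $op$-continuous, this gives $p\bigl(|x_\alpha-x|\wedge|x|\bigr)\xrightarrow{o}0$ in $E$. Combining with the displayed inequality and the monotonicity of $p$, we get $0 \le p\bigl(|x| - |x|\wedge|x_\alpha|\bigr) \le p\bigl(|x_\alpha-x|\wedge|x|\bigr) \xrightarrow{o} 0$, hence $p\bigl(|x|-|x|\wedge|x_\alpha|\bigr)\xrightarrow{o}0$, which is the first assertion. From here the remaining assertions follow verbatim as in the previous lemma: the triangle inequality $p(x) = p(|x|) \le p\bigl(|x|-|x|\wedge|x_\alpha|\bigr) + p\bigl(|x|\wedge|x_\alpha|\bigr)$ together with the fact that $E$ is order complete (so that $\liminf_\alpha$ of the norm net exists) yields $p(x) \le \liminf_\alpha p\bigl(|x|\wedge|x_\alpha|\bigr)$; the reverse inequality is immediate from monotonicity of $p$ since $|x|\wedge|x_\alpha|\le|x|$ gives $p\bigl(|x|\wedge|x_\alpha|\bigr)\le p(x)$ for every $\alpha$. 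Finally, if $x_\alpha$ is $p$-bounded, then $p\bigl(|x|\wedge|x_\alpha|\bigr)\le p(|x_\alpha|) = p(x_\alpha)$, so $\liminf_\alpha p\bigl(|x|\wedge|x_\alpha|\bigr)\le\liminf_\alpha p(x_\alpha)$, giving $p(x)\le\liminf_\alpha p(x_\alpha)$.

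There is essentially no obstacle here: the entire content is the single line ``$op$-continuity turns the order convergence $|x_\alpha-x|\wedge|x|\xrightarrow{o}0$ into $p(|x_\alpha-x|\wedge|x|)\xrightarrow{o}0$'', after which the argument is identical to the preceding lemma. The one point that deserves a moment's care is that $\liminf$ and the triangle-inequality manipulation are legitimate only because $E$ is order complete, so that order limits inferior of the relevant norm nets are well defined in $E$; this is why that hypothesis is explicitly retained in the statement. I would therefore keep the proof short, noting that it repeats the previous argument with the sole modification of invoking $op$-continuity at the appropriate step, and spell out only that step in detail.

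\begin{proof}
	As in the proof of the previous lemma,
	$$
	|x|-|x|\wedge|x_\alpha| = \bigl|\,|x_\alpha|\wedge|x|-|x|\wedge|x|\,\bigr| \le \bigl|\,|x_\alpha|-|x|\,\bigr|\wedge|x| \le |x_\alpha-x|\wedge|x|.
	$$
	Since $x_\alpha\xrightarrow{uo}x$, we have $|x_\alpha-x|\wedge|x|\xrightarrow{o}0$ in $X$, and the $op$-continuity of $(X,p,E)$ gives $p(|x_\alpha-x|\wedge|x|)\xrightarrow{o}0$ in $E$. By monotonicity of $p$ and the inequality above, $p(|x|-|x|\wedge|x_\alpha|)\xrightarrow{o}0$. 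Consequently
	$$
	p(x)=p(|x|)\le p(|x|-|x|\wedge|x_\alpha|)+p(|x|\wedge|x_\alpha|),
	$$
	so $p(x)\le\liminf_\alpha p(|x|\wedge|x_\alpha|)$, and since $p(|x|\wedge|x_\alpha|)\le p(x)$ for every $\alpha$ we conclude $p(x)=\liminf_\alpha p(|x|\wedge|x_\alpha|)$. Finally, if $x_\alpha$ is $p$-bounded, then $p(|x|\wedge|x_\alpha|)\le p(x_\alpha)$ for all $\alpha$, hence $p(x)\le\liminf_\alpha p(x_\alpha)$.
\end{proof}
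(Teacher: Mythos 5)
Your proof is correct and is exactly the argument the paper intends: the paper states this lemma without proof precisely because it reduces to the preceding ($up$-version) lemma once one notes that $op$-continuity converts the order convergence $|x_\alpha-x|\wedge|x|\xrightarrow{o}0$ into $p(|x_\alpha-x|\wedge|x|)\xrightarrow{o}0$, which is the single step you spell out. The remaining manipulations (the lattice inequality, the triangle inequality, and the use of order completeness of $E$ to make the $\liminf$'s legitimate) match the paper's proof of the previous lemma verbatim.
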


We finish this subsection with the following technical lemma.

\begin{lemma} \label{ptechlemma2}
	Given an LNVL $(X, p,E)$. If $x_\alpha\xrightarrow{p}x$ and $x_\alpha$ is an $o$-Cauchy net, then $x_\alpha\xrightarrow{o}x$. 
	Moreover, if $x_\alpha\xrightarrow{p}x$ and $x_\alpha$ is $uo$-Cauchy, then $x_\alpha\xrightarrow{uo}x$.
\end{lemma}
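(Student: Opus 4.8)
The plan is to reduce both assertions to a single mechanism: if a net is $p$-convergent to $x$ and simultaneously Cauchy for a coarser (order-type) convergence, then the candidate order limit supplied by the Cauchy condition must coincide with $x$, so the net order-converges (resp. $uo$-converges) to $x$. For the first statement, suppose $x_\alpha \xrightarrow{p} x$ and $x_\alpha$ is $o$-Cauchy, i.e. the net $(x_\alpha - x_{\alpha'})_{(\alpha,\alpha')}$ is $o$-convergent to $0$. The difficulty is that $o$-Cauchy does not by itself guarantee an order limit exists in $X$ (no completeness is assumed), so I cannot simply invoke uniqueness of limits. Instead I will work directly with the defining nets. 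Since $(x_\alpha - x_{\alpha'}) \xrightarrow{o} 0$, there is a net $z_\gamma \downarrow 0$ in $X$ such that for each $\gamma$ there is an index past which $|x_\alpha - x_{\alpha'}| \le z_\gamma$. Fixing $\gamma$ and letting $\alpha'$ range, the net $x_{\alpha'}$ eventually lies in the order interval $[x_\alpha - z_\gamma,\ x_\alpha + z_\gamma]$ for $\alpha$ large; I want to pass to the $p$-limit inside this interval. Concretely, for fixed large $\alpha$ and $\gamma$, from $x_{\alpha'} - x_\alpha \le z_\gamma$ for all $\alpha'$ beyond some point, Proposition~\ref{pos. cone is $p$-closed} applied to $z_\gamma - (x_{\alpha'} - x_\alpha) \in X_+$ together with $x_{\alpha'} \xrightarrow{p} x$ gives $z_\gamma - (x - x_\alpha) \in X_+$, i.e. $x - x_\alpha \le z_\gamma$; symmetrically $x_\alpha - x \le z_\gamma$. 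Hence $|x_\alpha - x| \le z_\gamma$ for all $\alpha$ past the index attached to $\gamma$, and since $z_\gamma \downarrow 0$ this is precisely $x_\alpha \xrightarrow{o} x$.

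For the moreover part, assume $x_\alpha \xrightarrow{p} x$ and $x_\alpha$ is $uo$-Cauchy, meaning $(x_\alpha - x_{\alpha'})_{(\alpha,\alpha')} \xrightarrow{uo} 0$; I must show $|x_\alpha - x| \wedge u \xrightarrow{o} 0$ for every $u \in X_+$. Fix $u \in X_+$. The idea is to repeat the argument above after truncating by $u$. By hypothesis $|x_\alpha - x_{\alpha'}| \wedge u \xrightarrow{o} 0$, so there is $z_\gamma \downarrow 0$ in $X$ with $|x_\alpha - x_{\alpha'}| \wedge u \le z_\gamma$ for all $(\alpha,\alpha')$ past an index $\alpha_\gamma$. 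For fixed $\alpha \ge \alpha_\gamma$, the elements $(x_{\alpha'} - x_\alpha)^+ \wedge u$ and $(x_{\alpha'} - x_\alpha)^- \wedge u$ are both $\le z_\gamma$ for $\alpha'$ large. Now I invoke $p$-continuity of the lattice operations (Lemma~\ref{LO are $p$-continuous}): since $x_{\alpha'} \xrightarrow{p} x$, also $(x_{\alpha'} - x_\alpha)^+ \wedge u \xrightarrow{p} (x - x_\alpha)^+ \wedge u$ and likewise with ${}^-$; combining with $z_\gamma - (x_{\alpha'}-x_\alpha)^\pm \wedge u \in X_+$ and Proposition~\ref{pos. cone is $p$-closed}, we get $(x - x_\alpha)^+ \wedge u \le z_\gamma$ and $(x - x_\alpha)^- \wedge u \le z_\gamma$, whence $|x - x_\alpha| \wedge u \le (x-x_\alpha)^+\wedge u + (x-x_\alpha)^-\wedge u \le 2z_\gamma$ for all $\alpha \ge \alpha_\gamma$. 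Since $2z_\gamma \downarrow 0$, this yields $|x_\alpha - x| \wedge u \xrightarrow{o} 0$; as $u \in X_+$ was arbitrary, $x_\alpha \xrightarrow{uo} x$.

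The main obstacle is purely bookkeeping with the double-indexed net: one must be careful that the $o$-convergence (resp. $uo$-convergence) of $(x_\alpha - x_{\alpha'})$ to $0$ is a statement about the \emph{product} directed set $A \times A$, so the index $\alpha_\gamma$ controls the pair, and one has to fix $\alpha$ suitably large first and only then let $\alpha'$ run to $+\infty$ to apply the $p$-limit. A minor subtlety is that $|a|\wedge u \le a^+\wedge u + a^-\wedge u$ is the right inequality to reduce the truncated absolute value to its positive and negative parts; one could alternatively work with $|x_{\alpha'} - x_\alpha|\wedge u$ directly and use that $x \mapsto |x - x_\alpha|\wedge u$ is $p$-continuous (again by Lemma~\ref{LO are $p$-continuous}), which shortens the second half but is morally the same. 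No completeness, decomposability, or order completeness of $E$ is needed anywhere — only Lemma~\ref{LO are $p$-continuous} and the $p$-closedness of $X_+$.
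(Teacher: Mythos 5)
Your proof is correct and follows essentially the same route as the paper's: extract the dominating net $z_\gamma\downarrow 0$ in $X$ from the Cauchy condition, pass to the $p$-limit in the second index, and use $p$-closedness of the positive cone (equivalently, $p$-continuity of the lattice operations) to conclude $|x_\alpha-x|\le z_\gamma$, resp.\ $|x_\alpha-x|\wedge u\le 2z_\gamma$, for $\alpha\ge\alpha_\gamma$. The paper dismisses the $uo$ case as ``similar''; you have simply written it out (your alternative remark, applying the inequality $\bigl||a|\wedge u-|b|\wedge u\bigr|\le|a-b|$ directly, is indeed the shorter version and avoids the factor $2$).
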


\begin{proof}
	Since $x_\alpha$ is order Cauchy, then $x_\alpha-x_\beta\xrightarrow{o}0$ as $\alpha,\beta\to\infty$. 
	So there exists $z_\gamma\downarrow 0$ such that, for every $\gamma$, there exists $\alpha_\gamma$ satisfying 
	\begin{equation} \label{techlemma2eqn}
	|x_\alpha-x_\beta|\leq z_\gamma, \,\,\,\, \forall\alpha,\beta\geq \alpha_\gamma.
	\end{equation}
	By taking limit over $\beta$ in (\ref{techlemma2eqn}) and applying Lemma \ref{LO are $p$-continuous}, we get 
	$|x_\alpha-x|\leq z_\gamma$ for all $\alpha\geq\alpha_\gamma$. Thus $x_\alpha\xrightarrow{o} x$.
	
	For the $uo$-convergence, the similar argument is used, so the proof is omitted.
\end{proof}

\subsection{$up$-Convergence and $p$-units}
The following result is a generalization of \cite[Lm.2.11]{DOT} and of \cite[Cor.3.5]{GTX} in LNVLs. 

First of all, we recall useful characterizations of order convergence.
For any order bounded net $x_\alpha$ in an order complete vector lattice $E$, $x_\alpha\xrightarrow[]{o} x$ iff $\limsup \limits_\alpha|x_\alpha-x|=\inf_\alpha\sup_{\beta\geq\alpha}|x_\beta-x|=0$. 
Moreover, for any net $x_\alpha$ in a vector lattice $E$, $x_\alpha\xrightarrow[]{o} 0$ in $E$ iff $x_\alpha\xrightarrow[]{o} 0$ in $E^\delta$ (the order completion of $E$); see, e.g., \cite[Cor.2.9]{GTX}.

\begin{theorem}\label{$up$-conv by $p$-unit}
	Let $(X,p,E)$ be an LNVL and $e\in X_+$ be a $p$-unit. Then $x_{\alpha}\xrightarrow{up}0$ iff $p(|x_\alpha|\wedge e)\xrightarrow{o} 0$ in $E$.
\end{theorem}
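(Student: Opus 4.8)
The plan is to prove the nontrivial implication: assuming $p(|x_\alpha|\wedge e)\xrightarrow{o}0$ for a fixed $p$-unit $e$, deduce that $p(|x_\alpha|\wedge u)\xrightarrow{o}0$ for every $u\in X_+$. The reverse implication is trivial since $e\in X_+$. So fix $u\in X_+$ and set $y_\alpha:=|x_\alpha|\in X_+$; we must show $p(y_\alpha\wedge u)\xrightarrow{o}0$.

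First I would exploit the $p$-unit property applied to $u$: for each $n\in\mathbb N$ we have $p(u-u\wedge ne)\xrightarrow{o}0$, but more usefully I want a single ``small'' bound, so I would fix an arbitrary $0\ne w\in p(X)$ (or, if $E$ is order complete, work with a decreasing net) and produce, for each $n$, the estimate that $u-u\wedge ne$ is $p$-small. The key elementary inequality is the lattice identity
\[
y_\alpha\wedge u\ \le\ y_\alpha\wedge(ne)\ +\ (u-u\wedge ne)\ \le\ n(y_\alpha\wedge e)\ +\ (u-u\wedge ne),
\]
where the first step uses $u\le ne+(u-u\wedge ne)$ together with the distributive/Birkhoff inequality $a\wedge(b+c)\le a\wedge b+a\wedge c$ for positive elements, and the second uses $y_\alpha\wedge(ne)\le n(y_\alpha\wedge e)$. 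Applying the monotone norm $p$ and subadditivity gives
\[
p(y_\alpha\wedge u)\ \le\ n\,p(y_\alpha\wedge e)\ +\ p(u-u\wedge ne).
\]

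Now the argument is a standard ``$\varepsilon/2$'' splitting adapted to order convergence. I would pass to $E^\delta$, the order completion of $E$, using the cited fact that order-null in $E$ is the same as order-null in $E^\delta$, so that $\limsup_\alpha$ makes sense. Taking $\limsup_\alpha$ of the displayed inequality and using $p(y_\alpha\wedge e)\xrightarrow{o}0$ kills the first term for every fixed $n$, leaving $\limsup_\alpha p(y_\alpha\wedge u)\le p(u-u\wedge ne)$ for all $n$. Since $e$ is a $p$-unit, $p(u-u\wedge ne)\xrightarrow{o}0$ as $n\to\infty$, hence the infimum over $n$ of the right-hand side is $0$, so $\limsup_\alpha p(y_\alpha\wedge u)=0$, i.e. $p(|x_\alpha|\wedge u)\xrightarrow{o}0$. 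Replacing $|x_\alpha|$ by $|x_\alpha-0|$ throughout, this is exactly $x_\alpha\xrightarrow{up}0$.

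The main obstacle I anticipate is the handling of order convergence when $E$ is not assumed order complete: $\limsup_\alpha$ need not exist, so I cannot literally take limsups. The clean workaround is the reduction to $E^\delta$ just mentioned; alternatively, one argues directly with the definition of order convergence by producing, from the two hypotheses $p(y_\alpha\wedge e)\xrightarrow{o}0$ and $p(u-u\wedge ne)\xrightarrow{o}0$, an explicit decreasing net in $E$ dominating $p(y_\alpha\wedge u)$ eventually — a diagonal-type construction over the product of the two index sets with $n$. I would present the $E^\delta$ route as the main line since it is shortest, and the inequality $p(y_\alpha\wedge u)\le n\,p(y_\alpha\wedge e)+p(u-u\wedge ne)$ is the one computational fact to verify carefully.
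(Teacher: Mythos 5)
Your proposal is correct and follows essentially the same route as the paper's proof: the same key inequality $p(|x_\alpha|\wedge u)\le n\,p(|x_\alpha|\wedge e)+p(u-u\wedge ne)$, the same passage to $E^\delta$ so that $\limsup_\alpha$ is available, and the same final appeal to the $p$-unit property as $n\to\infty$. The only cosmetic difference is how the Birkhoff-type splitting is written (the paper bounds $|x_\alpha|\wedge u$ by $|x_\alpha|\wedge(u-u\wedge ne)+|x_\alpha|\wedge(u\wedge ne)$, arriving at the identical estimate).
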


\begin{proof}
	The ``only if'' part is trivial. For the ``if'' part, let $u \in X_+$, then
	$$
	|x_\alpha|\wedge u\leq |x_\alpha|\wedge(u-u\wedge ne)+|x_\alpha|\wedge(u\wedge ne) 
	\leq (u-u\wedge ne)+n(|x_\alpha|\wedge e), 
	$$ 
	and so 
	$$ 
	p(|x_\alpha| \wedge u) \leq p(u-u \wedge ne) + np(|x_\alpha| \wedge e)
	$$ 
	holds in $E^\delta$ for any $\alpha$ and any $n\in\mathbb{N}$. Hence 
	$$
	\limsup\limits_\alpha p(|x_\alpha|\wedge u)\leq p(u-u\wedge ne)+n\limsup\limits_\alpha p(|x_\alpha|\wedge e)
	$$ 
	holds in $E^\delta$ for all $n\in\mathbb{N}$. Since $p(|x_\alpha|\wedge e)\xrightarrow{o}0$ in $E$, 
	then $p(|x_\alpha|\wedge e)\xrightarrow{o}0$ in $E^\delta$, and so $\limsup\limits_\alpha p(|x_\alpha|\wedge e)=0$ in $E^\delta$. Thus
	$$
	\limsup\limits_\alpha  p(|x_\alpha|\wedge u)\leq p(u-u\wedge ne)
	$$
	holds in $E^\delta$ for all $n\in\mathbb{N}$. Since $e$ is a $p$-unit, we have that $\limsup\limits_\alpha p(|x_\alpha|\wedge u)=0$ 
	in $E^\delta$ or $p(|x_\alpha|\wedge u)\xrightarrow[]{o} 0$ in $E^\delta$. It follows that $p(|x_\alpha|\wedge u)\xrightarrow[]{o} 0$ in $E$ and hence $x_\alpha\xrightarrow[]{up} 0$. 
\end{proof}

\subsection{$up$-Convergence and sublattices}
Given an LNVL $(X,p,E)$, a sublattice $Y$ of $X$, and a net $(y_\alpha)_\alpha\subseteq Y$. Then $y_\alpha\xrightarrow[]{up} 0$ in $Y$ has the meaning that
$$
p(|y_\alpha|\wedge u)\xrightarrow[]{o} 0 \ \ \ \  (\forall u\in Y_+).
$$
\

The following lemma is a $p$-version of \cite[Lm.3.3]{GX}.

\begin{lemma}
	Let $(X,p,E)$ be the LNVL, $B$ be a projection band of $X$, and $P_B$ be the corresponding band projection. 
	If $x_\alpha\xrightarrow{up}x$ in $X$, then $P_B(x_\alpha)\xrightarrow{up} P_B(x)$ in both $X$ and $B$. 
\end{lemma}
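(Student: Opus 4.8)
The plan is to reduce everything to the defining inequality for $up$-convergence and use the fact that a band projection $P_B$ is a positive operator contracting the lattice structure in a very rigid way: for any $v\in X_+$ one has $P_B(v)\le v$, and more importantly $P_B$ commutes with the lattice operations, so that $P_B(|x_\alpha-x|\wedge u)=|P_B x_\alpha-P_B x|\wedge P_B u$ for every $u$. First I would fix an arbitrary $u\in X_+$ and, using $P_B(|y|)=|P_B y|$ together with the identity $P_B(a\wedge b)=P_B a\wedge P_B b$, rewrite $|P_B x_\alpha-P_B x|\wedge u$. The key observation is that $P_B u\le u$ and $|P_B x_\alpha - P_B x| = P_B|x_\alpha - x| \le |x_\alpha - x|$, hence
$$
|P_B x_\alpha-P_B x|\wedge u\le |x_\alpha-x|\wedge u,
$$
and applying the monotone lattice norm $p$ gives $p\big(|P_B x_\alpha-P_B x|\wedge u\big)\le p\big(|x_\alpha-x|\wedge u\big)\xrightarrow{o}0$. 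This shows $P_B(x_\alpha)\xrightarrow{up}P_B(x)$ in $X$.

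For the convergence in $B$, I would note that the defining condition for $up$-convergence of the net $(P_B x_\alpha)$ to $P_B x$ \emph{inside the sublattice $B$} only quantifies over $u\in B_+$, which is a smaller family of test vectors than $u\in X_+$; hence it is a formally weaker requirement and follows immediately from the convergence in $X$ already established. Alternatively, and perhaps cleaner to state, for $u\in B_+$ we have $P_B u=u$, so the inequality above reads $|P_B x_\alpha-P_B x|\wedge u\le |x_\alpha-x|\wedge u$ directly, and the same $p$-estimate applies. So no separate argument is really needed beyond observing that $B_+\subseteq X_+$.

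I do not anticipate a serious obstacle here; the statement is essentially bookkeeping with band projections. The one point that deserves care is justifying the lattice identities for $P_B$: that $P_B$ is a lattice homomorphism (equivalently an order projection) onto the band $B$, so that $P_B(a\vee b)=P_B a\vee P_B b$, $P_B|y|=|P_B y|$, and $0\le P_B v\le v$ for $v\in X_+$. These are standard facts about band projections in an arbitrary (Archimedean) vector lattice and require no completeness or decomposability assumptions on $(X,p,E)$; I would simply cite the relevant passage of \cite{AB} or \cite{LZ1}. With these in hand the proof is a two-line estimate, exactly paralleling the proof of \cite[Lm.3.3]{GX} that it generalizes, the only new ingredient being the monotonicity of the lattice norm $p$, which is part of the definition of an LNVL.
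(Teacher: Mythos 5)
Your proof is correct and follows essentially the same route as the paper: both rest on the identity $|P_B(x_\alpha)-P_B(x)|=P_B|x_\alpha-x|\leq |x_\alpha-x|$, the monotonicity of $p$, and the observation that $up$-convergence in $B$ only tests against the smaller family $B_+\subseteq X_+$. You merely spell out the details that the paper leaves as ``it follows easily.''
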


\begin{proof}
	It is known that $P_B$ is a lattice homomorphism and $0\leq P_B\leq I$. Since $|P_B(x_\alpha)-P_B(x)|=P_B|x_\alpha-x|\leq |x_\alpha-x|$, 
	then it follows easily that $P_B(x_\alpha)\xrightarrow{up} P_B(x)$ in both $X$ and $B$.
\end{proof}

Let $(X,p,E)$ be an LNVL and $Y$ be a subset of $X$. Then $Y$ is called {\em $up$-closed} in $X$ if, for any net $y_\alpha$ in $Y$ that is $up$-convergent to $x\in X$, we have $x\in Y$. 
Clearly, every band is $up$-closed.\\ 

We present a $p$-version of \cite[Prop.3.15]{GTX} with a similar proof.

\begin{proposition}
	Let $X$ be an LNVL and $Y$ be a sublattice of $X$. Suppose that either $X$ is $op$-continuous or $Y$ is a $p$-KB-space in its own right.  
	Then $Y$ is $up$-closed in $X$ iff it is $p$-closed in $X$.
\end{proposition}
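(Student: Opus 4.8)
The plan is to prove the two implications separately, with the forward direction (``$p$-closed $\Rightarrow$ $up$-closed'') being the easy one and the reverse direction (``$up$-closed $\Rightarrow$ $p$-closed'') requiring the hypothesis. Since $p$-convergence implies $up$-convergence, any net in $Y$ that $p$-converges to some $x\in X$ is in particular $up$-convergent to $x$; hence if $Y$ is $up$-closed then $x\in Y$, so $Y$ is $p$-closed. This direction uses neither $op$-continuity nor the $p$-KB hypothesis.

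For the converse, suppose $Y$ is $p$-closed and take a net $(y_\alpha)\subseteq Y$ with $y_\alpha\xrightarrow{up} x$ in $X$. First I would reduce to the positive case: since lattice operations are $up$-continuous (by Lemma \ref{LO are $p$-continuous} applied through the definition of $up$-convergence), $y_\alpha^+\xrightarrow{up} x^+$ and $y_\alpha^-\xrightarrow{up} x^-$, and $Y$ being a sublattice contains all $y_\alpha^\pm$; so it suffices to show $x^+,x^-\in Y$, i.e.\ we may assume $0\le y_\alpha$ and $x\ge 0$ (the limit of positive elements is positive). Replacing $y_\alpha$ by $y_\alpha\wedge x$, which still lies in $Y$ and which $up$-converges to $x\wedge x=x$ by $up$-continuity of $\wedge$, I may further assume $0\le y_\alpha\le x$ for all $\alpha$. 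But an order bounded $up$-convergent net is $p$-convergent (this is noted right after Definition \ref{$up$-convergence}: for order bounded nets $up$-convergence and $p$-convergence agree; concretely $p(x-y_\alpha)=p((x-y_\alpha)\wedge x)\xrightarrow{o}0$). Hence $y_\alpha\xrightarrow{p} x$ with $(y_\alpha)\subseteq Y$, and $p$-closedness of $Y$ gives $x\in Y$.

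The role of the extra hypothesis is more subtle: the argument above only directly handles the \emph{order bounded} situation, and I do not see how to reduce the general case to it without invoking either $op$-continuity of $X$ or the $p$-KB property of $Y$. In the $op$-continuous case one should instead run the argument of \cite[Prop.3.15]{GTX}: pass to the ideal generated by the positive net, use that $up$-convergence there can be tested against a weak unit / $p$-unit via Theorem \ref{$up$-conv by $p$-unit}, and construct an order bounded ``truncated'' net in $Y$ that still $up$-converges to the same limit. When $Y$ is a $p$-KB-space, one instead builds an increasing $p$-bounded net in $Y_+$ out of truncations $y_\alpha\wedge (ne)$ and uses the $p$-KB property to produce a $p$-limit inside $Y$, matching it with $x$ via Lemma \ref{$up$-sup-inf} / Proposition \ref{$p$-sup-inf}.

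The main obstacle, and the step I expect to require the most care, is exactly this reduction of the general $up$-convergent net to an order bounded one while keeping the approximating net inside the sublattice $Y$: one has to truncate $y_\alpha$ by multiples of a fixed positive element, show the truncated net still $up$-converges to $x$ (or to a truncation of $x$), and then let the truncation level vary, invoking either the band/ideal structure under $op$-continuity or the $p$-KB completeness of $Y$ to close the gap. Once that reduction is in place, the finish via $p$-closedness of $Y$ is immediate from the order bounded case established above.
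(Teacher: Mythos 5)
Your forward direction ($up$-closed $\Rightarrow$ $p$-closed) is correct and is exactly the paper's one-line observation. The converse, however, contains a genuine gap at its central step: you replace $y_\alpha$ by $y_\alpha\wedge x$ and assert that this net ``still lies in $Y$''. It does not. The limit $x$ is an element of $X$ that is not yet known to belong to $Y$, and $Y$ is only a sublattice, not an ideal, so the infimum $y_\alpha\wedge x$ (computed in $X$) has no reason to be in $Y$; for instance, with $X=\mathbb{R}^2$ and $Y$ the diagonal, $(1,1)\wedge(1,0)=(1,0)\notin Y$. If this reduction were legitimate, your argument would prove the proposition with no hypothesis on $X$ or $Y$ at all, which should have been a warning sign. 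You do sense that something is missing (``the argument above only directly handles the order bounded situation''), but the sketch you then offer for exploiting $op$-continuity or the $p$-KB property is too vague to close the gap and does not describe the mechanism that actually works.

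The paper's proof repairs precisely this point by truncating with elements of $Y$ rather than with $x$: for $y\in Y_+$ the net $y_\alpha\wedge y$ does lie in $Y$, and the inequality $|y_\alpha\wedge z-x\wedge z|\le|y_\alpha-x|\wedge z$ shows that it $p$-converges to $x\wedge y$, so $p$-closedness yields $x\wedge y\in Y$ for every $y\in Y_+$. A disjointness argument then gives $x\in Y^{\perp\perp}$, which allows one to pick $w_\beta\in Y$ with $w_\beta\wedge x\xrightarrow{o}x$ in $X$ (take $0\le z_\beta\uparrow x$ in the ideal generated by $Y$ and majorants $w_\beta\ge z_\beta$ in $Y$). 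Only now do the hypotheses enter: if $X$ is $op$-continuous, then $w_\beta\wedge x\xrightarrow{p}x$ with $w_\beta\wedge x\in Y$ and $p$-closedness gives $x\in Y$; if instead $Y$ is a $p$-KB-space, the increasing $p$-bounded net of finite suprema $(w_{\beta_1}\vee\cdots\vee w_{\beta_n})\wedge x\in Y$ has a $p$-limit in $Y$, which must coincide with $x$ by Proposition \ref{$p$-sup-inf}. You would need to supply this (or an equivalent) chain of reductions for your argument to stand.
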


\begin{proof}
	Only the sufficiency requires a proof. Let $Y$ be $p$-closed in $X$ and $y_\alpha$ be a net in $Y$ with $y_\alpha\xrightarrow{up} x\in X$.
	WLOG, we assume $(y_\alpha)_\alpha\subseteq Y_+$ because the lattice operations in $X$ are $p$-continuous. 
	Note that, for every $z\in X_+$, $|y_\alpha\wedge z-x\wedge z|\leq |y_\alpha-x|\wedge z$ \ (cf. the inequality (1) in the proof of \cite[Prop.3.15]{GTX}).
	So $p(y_\alpha\wedge z-x\wedge z)\le p(|y_\alpha-x|\wedge z)\xrightarrow{o}0$.
	In particular, $Y\ni y_\alpha\wedge y\xrightarrow{p} x\wedge y$ in $X$ for any $y\in Y_+$. 
	Since $Y$ is $p$-closed, $x\wedge y\in Y$ for any $y\in Y_+$. Since, for any $0\le z\in Y^\perp$ 
	and for any $\alpha$ we have $y_\alpha\wedge z=0$, then 
	$$
	|x\wedge z|=|y_\alpha\wedge z-x\wedge z|\le |y_\alpha-x|\wedge z\xrightarrow{p} 0. 
	$$
	Therefore, $x\wedge z=0$, and hence $x\in Y^{\perp\perp}$. Since $Y^{\perp\perp}$ is the band generated by $Y$ in $X$, there is a net $z_\beta$ in the ideal
	$I_Y$ generated by $Y$ such that $0\le z_\beta\uparrow x$ in $X$. Take for every $\beta$ an element $w_\beta\in Y$ with $z_\beta\le w_\beta$.
	Then $x\ge w_\beta\wedge x\ge z_\beta\wedge x=z_\beta\uparrow x$ in $X$, and so $w_\beta\wedge x\xrightarrow{o} x$ in $X$.
	
	Case 1: If $X$ is $op$-continuous, then $w_\beta\wedge x\xrightarrow{p} x$. Since $w_\beta\wedge x\in Y$ and $Y$ is $p$-closed, we get $x\in Y$.
	
	Case 2: Suppose $Y$ is a $p$-KB-space in its own right. Let $\Delta$ be the collection of all finite subsets of the index set $B$. For each $\delta=\{\beta_1,\ldots,\beta_n\} \in \Delta$ let $y_\delta:=(w_{\beta_1}\vee \ldots \vee w_{\beta_n})\wedge x$. Clearly, $y_\delta \in Y$, $0\leq y_\delta \uparrow$, and the net $(y_\delta)$ is $p$-bounded in $Y$. Since $Y$ is a $p$-KB-space, then there is $y_0 \in Y$ such that $y_\delta \xrightarrow{p} y_0$ in $Y$ and trivially in $X$. Since $(y_\delta)$ is monotone then it follows from Proposition \ref{$p$-sup-inf} that $y_\delta \uparrow y_0$ in $X$. Also, we have $y_\delta \xrightarrow{o} x$ in $X$. Thus, $x=y_0 \in Y$.
\end{proof}

\subsection{$p$-Almost order bounded sets} 
Recall that a subset $A$ in a normed lattice $(X,\|\cdot\|)$ is said to be almost order bounded if, for any $\epsilon>0$, there is $u_\epsilon\in X_+$ 
such that $\|(|x|-u_\epsilon)^+\|=\||x|-u_\epsilon\wedge|x|\|\leq\epsilon$ for any $x\in A$. Similarly we have:

\begin{definition}
	Given an LNVL $(X,p,E)$. A subset $A$ of $X$ is called a {\em $p$-almost order bounded} if, for any $w\in E_+$, there is $x_w\in X_+$ such that $p((|x|-x_w)^+)=p(|x|-x_w\wedge |x|)\leq w$ for any $x\in A$.
\end{definition}

It is clear that $p$-almost order boundedness notion in LNVLs is a generalization of almost order boundedness in normed lattices. 
In the LNVL $(X,|\cdot|,X)$, a subset of $X$ is $p$-almost order bounded, iff it is order bounded in $X$. 

The following result is a $p$-version of \cite[Lm.2.9]{DOT}, and it is also similar to \cite[Prop.3.7]{GX}.

\begin{proposition}\label{$p$-almost order boundedness and $up$-convergence}
	If $(X,p,E)$ is an LNVL, $x_\alpha$ is $p$-almost order bounded, and $x_\alpha\xrightarrow{up} x$, then $x_\alpha\xrightarrow{p} x$.
\end{proposition}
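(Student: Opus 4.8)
The plan is to deduce $p$-convergence from the elementary identity
$|x_\alpha-x|=|x_\alpha-x|\wedge u+(|x_\alpha-x|-u)^+$, valid for every $u\in X_+$, and to choose $u$ so that the first summand is controlled by the hypothesis $x_\alpha\xrightarrow{up}x$ while the second is made uniformly $p$-small by the $p$-almost order boundedness of $(x_\alpha)$. Morally, this amounts to observing that a net that is $p$-almost order bounded remains so after translation by the fixed vector $x$, and then invoking Theorem-type ``localization'' at a single majorant.

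First I would upgrade the hypothesis to a statement about the net $x_\alpha-x$. Given $w\in E_+$, choose $x_w\in X_+$ with $p\bigl((|x_\beta|-x_w)^+\bigr)\le w$ for all $\beta$, and set $\tilde x_w:=x_w+|x|\in X_+$. Since $|x_\alpha-x|\le|x_\alpha|+|x|$ and $t\mapsto(t-c)^+$ is monotone, one gets $(|x_\alpha-x|-\tilde x_w)^+\le(|x_\alpha|-x_w)^+$, so monotonicity of $p$ gives $p\bigl((|x_\alpha-x|-\tilde x_w)^+\bigr)\le w$ for all $\alpha$. Feeding this and the displayed identity (with $u=\tilde x_w$) into the subadditivity of $p$ yields the key estimate
$$p(x_\alpha-x)=p(|x_\alpha-x|)\le p(|x_\alpha-x|\wedge\tilde x_w)+w\qquad(\forall\alpha).$$
Since $\tilde x_w\in X_+$ and $x_\alpha\xrightarrow{up}x$, the net $p(|x_\alpha-x|\wedge\tilde x_w)$ is $o$-null in $E$, hence in $E^\delta$; in particular $p(x_\alpha-x)$ is eventually order bounded in $E^\delta$, so that $\limsup_\alpha p(x_\alpha-x)$ is well defined there.

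It remains to let $w$ shrink to $0$. Working in the order complete lattice $E^\delta$ and using the $\limsup$-description of $o$-convergence recalled just before Theorem \ref{$up$-conv by $p$-unit}, the key estimate gives $\limsup_\alpha p(x_\alpha-x)\le w$ in $E^\delta$ for every $0\ne w\in E_+$. Applying this to $w/n$ shows that $\ell:=\limsup_\alpha p(x_\alpha-x)\ge0$ satisfies $n\ell\le w$ for all $n\in\mathbb N$, so $\ell=0$ since $E^\delta$ is Archimedean. Hence $p(x_\alpha-x)\xrightarrow{o}0$ in $E^\delta$, and therefore also in $E$ by the transfer fact recalled in the same place; that is, $x_\alpha\xrightarrow{p}x$.

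The only point requiring a little care is this last ``$w\downarrow0$'' passage: because $E$ is not assumed order complete, the relevant infima need not exist in $E$, but moving to $E^\delta$ (which inherits the Archimedean property, and across which $o$-convergence to $0$ is unchanged) makes it routine. Everything else is bookkeeping with the modulus inequality and with subadditivity and monotonicity of $p$.
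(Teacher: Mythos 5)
Your proof is correct and takes essentially the same route as the paper's: decompose $|x_\alpha-x|$ as $|x_\alpha-x|\wedge x_w$ plus $(|x_\alpha-x|-x_w)^+$, control the first term by $up$-convergence and the second by $p$-almost order boundedness, then pass to $\limsup$ in $E^\delta$ and let $w$ shrink. The only differences are cosmetic: you spell out the verification (which the paper merely asserts is easy) that $(|x_\alpha-x|)_\alpha$ is again $p$-almost order bounded, and you make the final ``$w$ arbitrary'' step explicit via the Archimedean property of $E^\delta$.
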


\begin{proof}
	Since $x_\alpha$ is $p$-almost order bounded, then it is easy to see that the net $(|x_\alpha-x|)_\alpha$ is also $p$-almost order bounded. 
	So given $w\in E_+$. Then there exists $x_w\in X_+$ with 
	$$ 
	p(|x_\alpha-x|-|x_\alpha-x|\wedge x_w)\leq w
	$$
	But $x_\alpha \xrightarrow{up}x$, so $\limsup\limits_\alpha p(|x_\alpha-x|\wedge x_w)=0$ in $E^\delta$. Thus, for any $\alpha$,
	$$ 
	p(x_\alpha -x)=p(|x_\alpha -x|) \leq p(|x_\alpha -x|-|x_\alpha -x|\wedge x_w)+p(|x_\alpha -x|\wedge x_w)\leq w + p(|x_\alpha -x|\wedge x_w) 
	$$
	Hence
	$$
	\limsup\limits_\alpha p(x_\alpha -x)\leq w+\limsup\limits_\alpha p(|x_\alpha -x|\wedge x_w)\leq w
	$$
	holds in $E^\delta$. But $w\in E_+$ is arbitrary, so $\limsup \limits_\alpha p(x_\alpha -x)=0$ in $E^\delta$. Thus $p(x_\alpha -x)\xrightarrow{o}0$ in $E^\delta$, and so in $E$.
\end{proof}

The following proposition is a $p$-version of \cite[Prop.4.2]{GX}.

\begin{proposition}
	Given an $op$-continuous and p-complete LNVL $(X,p,E)$. Then every $p$-almost order bounded $uo$-Cauchy net is $uo$- and $p$-convergent to the same limit.
\end{proposition}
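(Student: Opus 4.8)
The plan is to reduce everything to three facts already available: Proposition~\ref{$p$-almost order boundedness and $up$-convergence} (a $p$-almost order bounded $up$-null net is $p$-null), $p$-completeness of $X$, and the second half of Lemma~\ref{ptechlemma2} (a $uo$-Cauchy net that $p$-converges also $uo$-converges, to the same limit). Concretely, given a $p$-almost order bounded $uo$-Cauchy net $(x_\alpha)$, I would first show that it is $p$-Cauchy, then invoke $p$-completeness to obtain $x\in X$ with $x_\alpha\xrightarrow{p}x$, and finally apply Lemma~\ref{ptechlemma2} to upgrade this to $x_\alpha\xrightarrow{uo}x$; since $p$-convergence is retained, the net is then $uo$- and $p$-convergent to the common limit $x$, as required.

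The core step is the $p$-Cauchy claim, and I would establish it by working with the double net $y_{(\alpha,\beta)}:=x_\alpha-x_\beta$ on $A\times A$. First one checks that $(y_{(\alpha,\beta)})$ is again $p$-almost order bounded: if $x_w\in X_+$ witnesses $p(|x_\gamma|-x_w\wedge|x_\gamma|)\le w$ for all $\gamma$, then from $|x_\alpha-x_\beta|\le|x_\alpha|+|x_\beta|$ and the elementary vector-lattice inequality $(a+b-2c)^+\le(a-c)^++(b-c)^+$, valid for $a,b,c\in X_+$, one gets
$$
\big(|x_\alpha-x_\beta|-2x_w\big)^+\le\big(|x_\alpha|-x_w\big)^++\big(|x_\beta|-x_w\big)^+,
$$
so monotonicity and subadditivity of $p$ give $p\big((|y_{(\alpha,\beta)}|-2x_w)^+\big)\le 2w$; thus $2x_w$ witnesses $p$-almost order boundedness of $(y_{(\alpha,\beta)})$ for the datum $2w$. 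Next, $(x_\alpha)$ being $uo$-Cauchy says exactly that $y_{(\alpha,\beta)}\xrightarrow{uo}0$, and since $X$ is $op$-continuous, $uo$-convergence implies $up$-convergence, hence $y_{(\alpha,\beta)}\xrightarrow{up}0$. Now Proposition~\ref{$p$-almost order boundedness and $up$-convergence}, applied to the $p$-almost order bounded $up$-null net $(y_{(\alpha,\beta)})$, yields $y_{(\alpha,\beta)}\xrightarrow{p}0$, i.e.\ $(x_\alpha)$ is $p$-Cauchy, which is what we needed.

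I expect the only genuinely non-formal point to be precisely this stability of $p$-almost order boundedness under forming the difference net, i.e.\ the short computation above with $(a+b-2c)^+\le(a-c)^++(b-c)^+$ and the passage from $|x_\alpha-x_\beta|$ to $|x_\alpha|+|x_\beta|$; after that the statement is just a concatenation of Proposition~\ref{$p$-almost order boundedness and $up$-convergence}, $p$-completeness, and Lemma~\ref{ptechlemma2}. (Alternatively one could mimic the norm argument of \cite[Prop.4.2]{GX} verbatim, but routing the proof through the two cited results keeps it short and avoids re-deriving the ``$p$-almost order bounded $\Rightarrow$ $p$-Cauchy'' estimate by hand.)
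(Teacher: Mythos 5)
Your proof is correct and follows essentially the same route as the paper's: form the difference net, observe that it is $p$-almost order bounded and $uo$-null, use $op$-continuity together with Proposition~\ref{$p$-almost order boundedness and $up$-convergence} to conclude the original net is $p$-Cauchy, then invoke $p$-completeness and Lemma~\ref{ptechlemma2}. The only difference is that you spell out the verification that the difference net is $p$-almost order bounded (via $(a+b-2c)^+\le(a-c)^++(b-c)^+$), a step the paper asserts without proof.
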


\begin{proof}
	Let $x_\alpha$ be a $p$-almost order bounded $uo$-Cauchy net. Then the net $(x_\alpha-x_{\alpha'})$ is $p$-almost order bounded and is $uo$-converges to $0$. 
	Since $X$ is $op$-continuous, then $x_\alpha-x_{\alpha'}\xrightarrow{up}0$ and, by Proposition \ref{$p$-almost order boundedness and $up$-convergence}, 
	we get $x_\alpha-x_{\alpha'}\xrightarrow{p}0$. Thus $x_\alpha$ is $p$-Cauchy, and so is $p$-convergent. By Lemma \ref{ptechlemma2}, 
	we get that $x_\alpha$ is also $uo$-convergent to its $p$-limit.
\end{proof}

\subsection{$rup$-Convergence}
In this subsection, we introduce the notions of $rup$-convergence and of an $rp$-unit. Recall that a net $(x_{\alpha})_{\alpha\in A}$ in a vector lattice $E$ is \emph{relatively uniform convergent} 
(or {\em $ru$-convergent}, for short) to $x\in E$ if there is $y\in E_+$,  such that, for any $\varepsilon>0$, there exists $\alpha_0\in A$ such that $|x_\alpha-x|\leq\varepsilon y$ 
for any $\alpha\geq\alpha_0$, \cite[Thm.16.2]{LZ1}. In this case we write $x_{\alpha}\xrightarrow{ru}x$.

\begin{definition}
	Let $(X,p,E)$ be an LNVL. A net $(x_\alpha)_\alpha\subseteq X$ is said to be {\em relatively unbounded $p$-convergent} $(${\em $rup$-convergent}$)$ to $x\in X$ if 
	$$
	p(|x_\alpha-x|\wedge u)\xrightarrow{ru}0 \ \ \ (\forall u\in X_+).
	$$ 
	In this case we write $x_\alpha\xrightarrow{rup} x$.
\end{definition}

Clearly, $rup$-convergence implies $up$-convergence, but the converse need not be true.

\begin{definition}\label{rp-unitdef}
	Given an LNVL $(X,p,E)$. A vector $e\in X$ is called an $rp$-unit if, for any $x\in X_+$, we have $p(x-x\wedge ne) \xrightarrow{ru} 0$.
\end{definition}

Obviously, every $rp$-unit is a $p$-unit. So, by Remark \ref{properties of $p$-units} $(1)$ after Definition \ref{$p$-unit}, if $e\in X\neq\{0\}$ is an $rp$-unit then $e>0$.
Not every $p$-unit is an $rp$-unit. To see this, take $X=(C_b({\mathbb R}),\lvert\cdot\rvert,C_b({\mathbb R}))$ and $e=e(t)=e^{-|t|}$. 
Then $e$ is a $p$-unit. However, $e$ is not an $rp$-unit since $p(1-1\wedge ne)$ does not $ru$-converge to 0, where $1(t)\equiv 1$.

The following result generalizes \cite[Cor.3.5]{GTX} and \cite[Lm.2.11]{DOT}.

\begin{proposition}
	Let $(X,p,E)$ be an LNVL with an $rp$-unit $e$. Then $x_{\alpha}\xrightarrow{rup}0$ iff $p(|x_\alpha|\wedge e)\xrightarrow{ru}0$.
\end{proposition}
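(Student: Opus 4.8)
The plan is to mirror the proof of Theorem \ref{$up$-conv by $p$-unit}, replacing every occurrence of order convergence of the relevant nets in $E$ (or $E^\delta$) by relatively uniform convergence, and keeping track of the single regulator that controls the $ru$-convergence of $p(|x_\alpha|\wedge e)$. Only the ``if'' part needs an argument, since the ``only if'' part follows by taking $u=e$ in the definition of $rup$-convergence.

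First I would fix $u\in X_+$ and reuse the lattice inequality from the proof of Theorem \ref{$up$-conv by $p$-unit}, namely
$$
|x_\alpha|\wedge u\le(u-u\wedge ne)+n(|x_\alpha|\wedge e)\qquad(\forall\,\alpha,\ \forall\,n\in\mathbb N),
$$
so that, applying the monotone lattice norm $p$,
$$
p(|x_\alpha|\wedge u)\le p(u-u\wedge ne)+n\,p(|x_\alpha|\wedge e).
$$
Now I would invoke the hypotheses: since $e$ is an $rp$-unit there is a regulator $v\in E_+$ such that for every $\varepsilon>0$ there is $n_\varepsilon$ with $p(u-u\wedge ne)\le\varepsilon v$ for all $n\ge n_\varepsilon$ (here I use that $e$ being an $rp$-unit gives $p(u-u\wedge ne)\xrightarrow{ru}0$ in $n$; strictly one gets a regulator depending on $u$, which is fine); and since $p(|x_\alpha|\wedge e)\xrightarrow{ru}0$ there is a regulator $w\in E_+$ such that for every $\delta>0$ there is $\alpha_\delta$ with $p(|x_\alpha|\wedge e)\le\delta w$ for all $\alpha\ge\alpha_\delta$.

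The key step is to combine these two estimates with matched rates. Given $\varepsilon>0$, first choose $n=n(\varepsilon)$ so large that $p(u-u\wedge ne)\le\tfrac{\varepsilon}{2}\,v$; with this $n$ now fixed, choose $\alpha_\varepsilon$ so that $p(|x_\alpha|\wedge e)\le\tfrac{\varepsilon}{2n}\,w$ for all $\alpha\ge\alpha_\varepsilon$. Then for all $\alpha\ge\alpha_\varepsilon$,
$$
p(|x_\alpha|\wedge u)\le\tfrac{\varepsilon}{2}\,v+n\cdot\tfrac{\varepsilon}{2n}\,w=\tfrac{\varepsilon}{2}(v+w)\le\varepsilon(v+w).
$$
Setting $y:=v+w\in E_+$, this says exactly $p(|x_\alpha|\wedge u)\xrightarrow{ru}0$ with regulator $y$ (independent of $\varepsilon$), i.e. $p(|x_\alpha|\wedge u)\le\varepsilon y$ eventually. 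Since $u\in X_+$ was arbitrary, $x_\alpha\xrightarrow{rup}0$.

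The main obstacle — and the only place the argument is genuinely more delicate than in the unbounded-convergence ($o$-limit) case — is the \emph{order of quantifiers for the regulator}: we cannot pass to $\limsup_\alpha$ as in Theorem \ref{$up$-conv by $p$-unit} (there is no $E^\delta$-style $\limsup$ characterization of $ru$-convergence), so we must fix $n$ \emph{first}, depending on $\varepsilon$, and only then shrink in $\alpha$; and we must check that the resulting bound is of the required uniform form $\varepsilon y$ with one fixed $y\in E_+$. The computation above shows $y=v+w$ works, so no completion of $E$ is needed and the proof stays entirely inside $E$.
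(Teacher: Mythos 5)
Your proof is correct and follows essentially the same route as the paper: the same lattice inequality $p(|x_\alpha|\wedge u)\le p(u-u\wedge ne)+np(|x_\alpha|\wedge e)$, followed by fixing $n$ first (via the $rp$-unit) and then the tail in $\alpha$, combining the two regulators into one (the paper takes $y\vee z$ where you take $v+w$; either works). Your closing remark about the quantifier order for the regulator is exactly the point where the argument differs from Theorem \ref{$up$-conv by $p$-unit}, and the paper handles it the same way.
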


\begin{proof}
	The ``only if'' part is trivial. For the ``if'' part let $u\in X_+$, then
	$$
	|x_\alpha|\wedge u\leq|x_\alpha|\wedge(u-u\wedge ne)+|x_\alpha|\wedge(u\wedge ne)\leq(u-u\wedge ne)+n(|x_\alpha|\wedge e), 
	$$
	and so
	$$ 
	p(|x_\alpha|\wedge u)\leq p(u-u\wedge ne)+np(|x_\alpha|\wedge e)
	$$
	holds for any $\alpha$ and any $n\in\mathbb{N}$. 
	
	Given $\varepsilon>0$. Since $e$ is an $rp$-unit, then there is $y\in E_+$ and $n_0\in\mathbb{N}$ such that 
	$$ 
	p(u-u\wedge n_0 e)\leq\frac{\varepsilon}{2}y.
	$$
	It follows from $p(|x_\alpha|\wedge e)\xrightarrow{ru}0$ that there exists $z\in E_+$ and $\alpha_0$ such that 
	$$ 
	p(|x_\alpha|\wedge e)\leq\frac{\varepsilon}{2n_0} z
	$$
	for any $\alpha\geq\alpha_0$. Take $w:=y\vee z$, then 
	$$
	p(|x_\alpha|\wedge u)\leq\varepsilon w
	$$
	for any $\alpha\geq\alpha_0$. Therefore, $p(|x_\alpha|\wedge u)\xrightarrow{ru}0$.
\end{proof}

\section{$up$-Regular sublattices}

The $up$-convergence passes obviously to any sublattice of $X$. As it was remarked in \cite[p.3]{DOT}, in opposite to $uo$-convergence \cite[Thm.3.2]{GTX},
the $un$-convergence does not pass even from regular sublattices. These two facts motivate the following notion in LNVLs.

\begin{definition}
	Let $(X,p,E)$ be an LNVL and $Y$ be a sublattice of $X$. Then $Y$ is called {\em $up$-regular} if, for any net $y_\alpha$ in $Y$, 
	$y_\alpha\xrightarrow{up}0$ in $Y$ implies $y_\alpha\xrightarrow{up}0$ in $X$. Equivalently, $Y$ is $up$-regular 
	in $X$ when $y_\alpha\xrightarrow{up}0$ in $Y$ iff $y_\alpha\xrightarrow{up}0$ in $X$ for any net $y_\alpha$ in $Y$.
\end{definition}

It is clear that if $Y$ is a regular sublattice of a vector lattice $X$, then $Y$ is $up$-regular
in the LNVL $(X, \lvert \cdot \rvert)$; see \cite[Thm.3.2]{GTX}. The converse does not hold in
general.

\begin{example}
	Let $X=B([0,1])$ be the space of all real-valued bounded functions on $[0,1]$ and $Y=C[0,1]$. First of all $X$  under the pointwise ordering $($i.e., $f\leq g$ in $X$ iff $f(t)\leq g(t)$ for all $t\in [0,1]$$)$ is a vector lattice and if we equip $X$  with the $\infty$-norm, then it becomes a Banach lattice.\\
	We claim that the sublattice $Y=(Y,\lvert \cdot \rvert,Y)$ is a $up$-regular sublattice of $X=(X,\lvert \cdot \rvert,X)$. Let $ (f_\alpha) $ be a net in $Y$ such that $f_\alpha \xrightarrow{up} 0$ in  $Y$. That is $\lvert f_\alpha \rvert \wedge g \xrightarrow{o} 0$ in $X$ for any $g \in Y_+$. In particular, we have $\lvert f_\alpha \rvert \wedge \textbf{1} \xrightarrow{o} 0$ in $X$, where $ \textbf{1} $ denotes the constant function one. Since $ \textbf{1} $ is a strong unit in $X$, then it is a $p$-unit for the LNVL $(X,\lvert \cdot \rvert,X)$. It follows from Theorem \ref{$up$-conv by $p$-unit} in Subsection 5.1.3 that $f_\alpha \xrightarrow{up}  0$ in  $X$.
	However, the sublattice $Y$ is not regular in $X$. Indeed, for each $n \in \mathbb{N}$ let $f_n$ be a continuous function on $[0,1]$ defined as: 
	\begin{itemize}
		\item $f_n$ is zero on the intervals $ [0,\frac{1}{2}-\frac{1}{n+2}] $ and $ [\frac{1}{2}+\frac{1}{n+2},1] $,
		\item $f_n(\frac{1}{2})=1$,
		\item $f_n$ is linear on the intervals $ [\frac{1}{2}-\frac{1}{n+2},\frac{1}{2}] $ and $ [\frac{1}{2},\frac{1}{2}+\frac{1}{n+2}] $.
	\end{itemize}
	Then $f_n \downarrow 0$ in $C[0,1]$ but $f_n \downarrow \chi_{\frac{1}{2}}$ in $B([0,1])$. So by Lemma 2.5 in \cite{GTX}, we have that  $Y$ is not regular in $X$. 	
\end{example}

Consider the sublattice $c_0$ of $\ell_\infty$. Then $(c_0,\lVert \cdot \rVert_\infty, {\mathbb R})$ is not $up$-regular in the LNVL $(\ell_\infty,\lVert \cdot \rVert_\infty, {\mathbb R})$. Indeed, $(e_n)$ is $un$-convergent in $c_0$ but not in $\ell_\infty$. However, $(c_0,\lvert \cdot \rvert,\ell_\infty)$ is $up$-regular in the LNVL $(\ell_\infty,\lvert \cdot\rvert,\ell_\infty)$.

\subsection{Several basic results}
We begin with the following result which is a $p$-version of \cite[Thm.4.3]{KMT}

\begin{theorem}\label{$up$-regular}
	Let $Y$ be a sublattice of an LNVL $X=(X,p,E)$. Then $Y$ is $up$-regular in each of the following cases:
	
	$(i)$\ $Y$ is majorizing in $X$;
	
	$(ii)$\ $Y$ is $p$-dense in $X$;
	
	$(iii)$\ $Y$ is a projection band in $X$.
\end{theorem}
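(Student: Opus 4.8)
The plan is to verify $up$-regularity separately in each of the three cases, in each case starting from a net $y_\alpha$ in $Y$ with $y_\alpha \xrightarrow{up} 0$ in $Y$, fixing an arbitrary $u \in X_+$, and showing $p(|y_\alpha| \wedge u) \xrightarrow{o} 0$ in $E$. The common strategy is to dominate $|y_\alpha| \wedge u$ (for $u \in X_+$) by a term of the form $|y_\alpha| \wedge v$ with $v \in Y_+$, possibly plus an error term controlled by a $p$-unit as in Theorem \ref{$up$-conv by $p$-unit}, and then apply monotonicity of $p$.

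\medskip

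For case $(i)$, since $Y$ is majorizing, for the fixed $u \in X_+$ there is $v \in Y$ with $u \le v$; replacing $v$ by $v^+ \in Y_+$ we get $0 \le u \le v \in Y_+$, so $|y_\alpha| \wedge u \le |y_\alpha| \wedge v$, whence $p(|y_\alpha|\wedge u) \le p(|y_\alpha|\wedge v) \xrightarrow{o} 0$ because $y_\alpha \xrightarrow{up} 0$ in $Y$ and $v \in Y_+$. For case $(iii)$, let $P = P_Y$ be the band projection onto $Y$. For $u \in X_+$ we have $|y_\alpha| \wedge u = P(|y_\alpha| \wedge u) = P|y_\alpha| \wedge Pu = |y_\alpha| \wedge Pu$ since $y_\alpha \in Y$ and $P$ is a lattice homomorphism; and $Pu \in Y_+$, so again $p(|y_\alpha|\wedge u) = p(|y_\alpha|\wedge Pu) \xrightarrow{o} 0$. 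These two cases are essentially bookkeeping.

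\medskip

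Case $(ii)$ is the main obstacle and needs the $p$-dense hypothesis together with Theorem \ref{$up$-conv by $p$-unit}. The idea: fix $u \in X_+$ and $0 \ne w \in p(X)$; I want to find, for each such $w$, an index-independent majorant. Choose $b \in Y$ with $p(u - b) \le w$; replacing $b$ by $|b| \wedge u \in Y_+$ (which satisfies $|u - |b|\wedge u| \le |u - b|$, by the same inequality used in the proof of the proposition preceding Section 3) we may take $0 \le b \le u$, $b \in Y_+$, and $p(u-b) \le w$. Then
\begin{equation*}
|y_\alpha| \wedge u \le |y_\alpha| \wedge (u - b) + |y_\alpha| \wedge b \le (u-b) + |y_\alpha| \wedge b,
\end{equation*}
so $p(|y_\alpha| \wedge u) \le p(u-b) + p(|y_\alpha| \wedge b) \le w + p(|y_\alpha|\wedge b)$. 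Since $b \in Y_+$ and $y_\alpha \xrightarrow{up} 0$ in $Y$, we have $p(|y_\alpha|\wedge b) \xrightarrow{o} 0$, hence (passing to $E^\delta$ and using the $\limsup$ characterization of order convergence recalled before Theorem \ref{$up$-conv by $p$-unit}) $\limsup_\alpha p(|y_\alpha|\wedge u) \le w$ in $E^\delta$. As $w$ ranges over the nonzero elements of $p(X)$ — which, by $p$-density, can be taken arbitrarily "small" in the relevant sense — one concludes $\limsup_\alpha p(|y_\alpha|\wedge u) = 0$ in $E^\delta$, i.e. $p(|y_\alpha|\wedge u) \xrightarrow{o} 0$ in $E^\delta$ and therefore in $E$. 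The delicate point I expect to have to argue carefully is that the family $\{w : 0 \ne w \in p(X)\}$ is downward directed enough to force the $\limsup$ to vanish; this may require invoking that $p$-density gives, for each fixed $a$, approximants within $\frac{1}{m}u_0$ for any fixed $0 \ne u_0 \in p(X)$ and all $m \in \mathbb N$, exactly as in the proof that an ideal with $p$-dense property generates a $p$-unit. So the cleanest route is probably: fix once and for all $0 \ne u_0 \in p(X)$, obtain $b_m \in Y_+$ with $0 \le b_m \le u$ and $p(u - b_m) \le \frac{1}{m}u_0$, deduce $\limsup_\alpha p(|y_\alpha|\wedge u) \le \frac{1}{m}u_0$ for all $m$, and let $m \to \infty$.
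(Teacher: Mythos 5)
Your proposal is correct and follows essentially the same route as the paper: majorization in $(i)$, the band projection/disjoint decomposition in $(iii)$, and in $(ii)$ the same splitting $|y_\alpha|\wedge u\le |y_\alpha|\wedge(u-b)+|y_\alpha|\wedge b$ with a $p$-dense approximant $b\in Y$. Your extra care in $(ii)$ — taking $b_m$ with $p(u-b_m)\le\frac{1}{m}u_0$ and letting $m\to\infty$ — is a valid (and slightly more explicit) way of justifying the paper's terser "since $0\ne u\in p(X)$ is arbitrary" conclusion, since $p(X)$ is closed under scalar multiplication and $E^\delta$ is Archimedean.
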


\begin{proof}
	Let $(y_\alpha)\subseteq Y$ be such that $y_\alpha\xrightarrow[]{up} 0$ in $Y$. Let $0\neq x\in X_+$.
	
	$(i)$ There exists $y\in Y$ such that $x\leq y$. It follows from
	$$
	0\leq |y_\alpha|\wedge x\leq |y_\alpha|\wedge y\xrightarrow{p}0,
	$$
	that $y_\alpha\xrightarrow{up}0$ in $X$.
	
	$(ii)$ Choose an arbitrary $0\ne u\in p(X)$. Then there exists $y\in Y$ with $p(x-y)\le u$. Since
	$$
	|y_\alpha|\wedge x\le |y_\alpha|\wedge |x-y|+|y_\alpha|\wedge |y|,
	$$
	then 
	$$
	p(|y_\alpha|\wedge x)\le p(|y_\alpha|\wedge |x-y|)+p(|y_\alpha|\wedge |y|)\le u+p(|y_\alpha|\wedge |y|).
	$$
	Since $0\ne u\in p(X)$ is arbitrary and $|y_\alpha|\wedge |y|\xrightarrow{p}0$, then $|y_\alpha|\wedge x\xrightarrow{p}0$.
	Hence $y_\alpha\xrightarrow{up}0$ in $X$.
	
	$(iii)$ $Y=Y^{\bot\bot}$ implies that $X=Y\oplus Y^{\bot}$. Hence $x=x_1+x_2$ with $x_1\in Y$ and $x_1\in Y^{\bot}$
	Since $y_\alpha\wedge x_2=0$, we have 
	$$
	p(y_\alpha\wedge x)=p(y_\alpha\wedge(x_1+x_2))=p(y_\alpha\wedge x_1)\xrightarrow{o}0.
	$$ 
	Hence $y_\alpha\xrightarrow{up}0$ in $X$.
\end{proof}

The following result deals with a particular case of Example \ref{ExLNVL_5}.

\begin{theorem}\label{ideal is up-regular}
	Let $X$ be a vector lattice and $Y=X_n^\sim$. Assume $X_n^\sim$ separates the points of $X$. Define $p:X\to\mathbb{R}^Y$ by $p(x)[y]=|y|(|x|)$. 
	Then any ideal of $X$ is $up$-regular in $(X,p,\mathbb{R}^Y)$.
\end{theorem}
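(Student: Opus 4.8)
The plan is to let $I$ be an ideal of $X$ and $(y_\alpha)\subseteq I$ a net with $y_\alpha\xrightarrow{up}0$ in $I$; we must show $y_\alpha\xrightarrow{up}0$ in $X$, i.e. $p(|y_\alpha|\wedge u)\xrightarrow{o}0$ in $\mathbb{R}^Y$ for every $u\in X_+$. Since $p$ takes values in $\mathbb{R}^Y$ and order convergence there can be checked coordinatewise on $Y=X_n^\sim$, it suffices to fix $0\le f\in X_n^\sim$ and show $f(|y_\alpha|\wedge u)\to 0$ in $\mathbb{R}$ for an arbitrary $u\in X_+$; note $p(|y_\alpha|\wedge u)[f]=|f|(|y_\alpha|\wedge u)$, so passing to $|f|$ we may as well take $f\ge 0$. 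The hypothesis gives us that for every $v\in I_+$ we have $f(|y_\alpha|\wedge v)\to 0$.

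The key step is to exploit the order continuity of $f$ together with the fact that $I$ is an ideal, hence order dense in the band $B:=I^{\perp\perp}$ it generates. First I would dispose of the component of $u$ in $B^\perp$: since $y_\alpha\in I\subseteq B$, we have $|y_\alpha|\wedge w=0$ for any $w\in B^\perp_+$, and writing $u=u_1+u_2$ with $u_1\in B$, $u_2\in B^\perp$ (when $B$ is a projection band) gives $|y_\alpha|\wedge u\le |y_\alpha|\wedge u_1+|y_\alpha|\wedge u_2=|y_\alpha|\wedge u_1$. In general $B$ need not be a projection band, but $f\in X_n^\sim$ vanishes on the band $B^\perp$? — not quite; rather, what we need is: for $u\in X_+$ there is a net $0\le u_\beta\uparrow u\wedge(\text{something in }B)$ with $u_\beta$ in the ideal $I$. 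Concretely, since $I$ is order dense in $B$, there is a net $(u_\beta)\subseteq I_+$ with $u_\beta\uparrow u\wedge b$ for the component-like element; more usefully, I would use that for the given $u$, $u\wedge(\sup\text{ of an upward-directed family in }I_+)$ approximates the $B$-part of $u$ in order. Then for each fixed $\beta$, $u_\beta\in I_+$, so $f(|y_\alpha|\wedge u_\beta)\to_\alpha 0$, while the order continuity of $f$ lets us control the error $f(|y_\alpha|\wedge u)-f(|y_\alpha|\wedge u_\beta)\le f((u-u_\beta)\wedge u)\to_\beta$ a quantity supported on $B^\perp$.

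The main obstacle is precisely handling the $B^\perp$-part of $u$: one must show $f$ kills it, i.e. that an order continuous functional $f$ restricted to the band $B^\perp$ contributes nothing relevant — this is false for a single $f$, so the correct route is to observe that $|y_\alpha|\wedge u$ itself lies in $B$ (as $y_\alpha\in B$ and $B$ is an ideal), hence $|y_\alpha|\wedge u=|y_\alpha|\wedge u\wedge b$ for the appropriate dominating element, and then approximate $u\wedge(\cdot)$ from below inside $I$. So the clean argument is: $|y_\alpha|\wedge u\in B=I^{\perp\perp}$; choose via order density of $I$ in $B$ a net $0\le u_\beta\uparrow$ in $I_+$ with $u_\beta\ge$ nothing — instead, for fixed $u$ set $v:=$ any element of $I_+$ and use $|y_\alpha|\wedge u\le |y_\alpha|\wedge v+(|y_\alpha|\wedge u-v)^+$; take $\sup$ over $v\in I_+$ with $v\le u$. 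Applying $f$, using $f\in X_n^\sim$ and $\{v\in I_+:v\le u\}\uparrow u\wedge(\sup)$ where the sup equals the $B$-component which dominates $|y_\alpha|\wedge u$, we get $f(|y_\alpha|\wedge u)\le f(|y_\alpha|\wedge v)+\varepsilon$ uniformly in $\alpha$ for suitable $v$, and then let $\alpha\to\infty$. I expect the bookkeeping of ``order density of an ideal in its generated band, tested against an order continuous functional'' to be the technical heart, invoking \cite[Thm.1.40 or similar]{AB} on order denseness; the rest is the same $\limsup$-estimate as in Theorem \ref{$up$-conv by $p$-unit}.
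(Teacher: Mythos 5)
Your reduction to showing $|f|(|y_\alpha|\wedge u)\to 0$ for each fixed $f\in X_n^\sim$ and each $u\in X_+$ is exactly the paper's first step (via Proposition \ref{Zabeti}), and your $\varepsilon/2$-splitting $f(|y_\alpha|\wedge u)\le f(|y_\alpha|\wedge v)+f\bigl((|y_\alpha|\wedge u-v)^+\bigr)$ with $v\in I_+$ is the right shape of argument. The gap lies in how you control the second term uniformly in $\alpha$. You bound $(|y_\alpha|\wedge u-v)^+$ by $(s-v)^+$, where $s$ is ``the $B$-component of $u$,'' claimed to equal $\sup\{v\in I_+:v\le u\}$ and to dominate every $|y_\alpha|\wedge u$. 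But $X$ is only Archimedean: $B=I^{\perp\perp}$ need not be a projection band, so $u$ has no $B$-component, and the upward directed order bounded set $D=\{v\in I_+:v\le u\}$ need not have a supremum in $X$ at all. Without such a dominating $s$ independent of $\alpha$, the error term cannot be made small by one choice of $v$, and order continuity of $f$ has nothing to act on. (Even when $\sup D$ exists it is typically $u$ itself rather than a proper component --- e.g. $X=\ell_\infty$, $I=c_0$, $u=\mathbf 1$ --- so the ``component'' picture is misleading, though in that case harmless.)

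The paper avoids this by approximating $u$ from below not by $I_+$ but by $(I\oplus I^\perp)_+$: since $I\oplus I^\perp$ is order dense in $X$, there is a net $w_\beta\uparrow u$ with $w_\beta=w_\beta^1+w_\beta^2$, $w_\beta^1\in I_+$, $w_\beta^2\in I^\perp_+$; order continuity of $|f|$ gives $|f|(u)-|f|(w_\beta)\downarrow 0$, while $|y_\alpha|\wedge w_\beta\le|y_\alpha|\wedge w_\beta^1$ is killed by the hypothesis. Your route is repairable: either pass to $X^\delta$, where $\sup D$ exists and $f$ extends order continuously (since $X_n^\sim=(X^\delta)_n^\sim$), or, more simply, note that for $a:=|y_\alpha|\wedge u\in D$ and $v\in D$ one has $a\vee v\in D$ and $f(a)-f(a\wedge v)=f(a\vee v)-f(v)\le\sup_{w\in D}f(w)-f(v)$, which can be made small by choosing $v$ with $f(v)$ close to $\sup_{w\in D}f(w)\le f(u)$, without ever forming $\sup D$ in $X$. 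As written, however, the key uniform estimate is not justified.
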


\begin{proof}
	Let $I$ be an ideal of $X$ and $x_\alpha$ be a net in $I$ such that $x_\alpha\xrightarrow[]{up} 0$ in $I$. We show $x_\alpha\xrightarrow[]{up} 0$ in $X$. 
	By Example \ref{Zabeti}, this is equivalent to show $|x_\alpha|\wedge u\xrightarrow{|\sigma|(X,Y)} 0$ for any $u\in X_+$. 
	First note that if $v\in I^\perp$, then $|x_\alpha|\wedge |v|=0$, and so, for any $w\in (I\oplus I^\perp)_+$, we have $|x_\alpha|\wedge w\xrightarrow{|\sigma|(X,Y)} 0$. 
	Note also that $I\oplus I^\perp$ is order dense (see, e.g., \cite[Thm.3.3.(2)]{AB}). Let $u\in X_+$ and $y\in Y$, 
	then there is a net $w_\beta$ in $(I\oplus I^\perp)_+$ such that $w_\beta\uparrow u$, and so $|y|(w_\beta\wedge u)\uparrow |y|(u)$. Given $\varepsilon>0$. There is $\beta_0$ such that
	$$ 
	|y|(u)-|y|(w_{\beta_0}\wedge u)<\frac{\varepsilon}{2}.
	$$
	Also there is $\alpha_0$ such that 
	$$
	|y|(|x_\alpha|\wedge w_{\beta_0})<\frac{\varepsilon}{2}
	$$
	for all $\alpha\geq\alpha_0$. Taking into account the inequality $|a\wedge c -b\wedge c|\leq |a-c|$ (cf. \cite[Thm.1.6.(2)]{AB}) we have for any $\alpha\geq\alpha_0$,
	$$
	|y|(|x_\alpha|\wedge u)=|y|(|x_\alpha|\wedge u)-|y|(|x_\alpha|\wedge u\wedge w_{\beta_0}) +|y|(|x_\alpha|\wedge u\wedge w_{\beta_0}) 
	$$
	$$
	\leq |y|(u)-|y|(w_{\beta_0}\wedge u)+|y|(|x_\alpha|\wedge w_{\beta_0})<\varepsilon.
	$$
	Since $u\in X_+$ and $y\in Y$ are arbitrary, we get $|x_\alpha|\wedge u\xrightarrow{|\sigma|(X,Y)} 0$ for any $u\in X_+$, and this completes the proof.
\end{proof}

The next Corollary might be compared with \cite[Cor.4.6]{KMT}
\begin{corollary}
	Let $X$ be a vector lattice and $Y=X_n^\sim$ be the {\em order continuous dual}. Assume that $X_n^\sim$ separates the points of $X$. Define $p:X\to\mathbb{R}^Y$ by $p(x)[y]=|y|(|x|)$. 
	Then any sublattice of $X$ is $up$-regular in the LNVL $(X,p,\mathbb{R}^Y)$.
\end{corollary}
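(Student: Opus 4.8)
The plan is to deduce this from the ideal case, Theorem \ref{ideal is up-regular}, by interposing the ideal generated by the sublattice. Write $Z$ for the given sublattice of $X$ (to avoid a clash with $Y=X_n^\sim$), and let $I_Z$ denote the ideal generated by $Z$ in $X$. The first point I would record is that, because $Z$ is a sublattice (equivalently, a linear subspace closed under $|\cdot|$), the usual description of the generated ideal simplifies to
$$I_Z=\{x\in X:\ |x|\le z\ \text{ for some }\ z\in Z_+\};$$
indeed, any finite sum $\sum_i\lambda_i|s_i|$ with $\lambda_i\ge 0$ and $s_i\in Z$ already belongs to $Z_+$, so it suffices to dominate by a single element of $Z_+$. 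In particular every positive element of $I_Z$ lies below some element of $Z_+$.

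Next I would check that $Z$ is $up$-regular \emph{in $I_Z$}. Let $(z_\alpha)$ be a net in $Z$ with $z_\alpha\xrightarrow{up}0$ in $Z$, i.e. $p(|z_\alpha|\wedge w)\xrightarrow{o}0$ for every $w\in Z_+$. Given $u\in(I_Z)_+$, pick $z\in Z_+$ with $u\le z$; then $0\le|z_\alpha|\wedge u\le|z_\alpha|\wedge z$, and monotonicity of $p$ on the vector lattice $X$ gives $0\le p(|z_\alpha|\wedge u)\le p(|z_\alpha|\wedge z)\xrightarrow{o}0$. Hence $p(|z_\alpha|\wedge u)\xrightarrow{o}0$ for every $u\in(I_Z)_+$, that is, $(z_\alpha)$, viewed as a net in $I_Z$, satisfies $z_\alpha\xrightarrow{up}0$ in $I_Z$.

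Finally, $I_Z$ is an ideal of $X$ and the hypotheses of the corollary ($X$ a vector lattice, $X_n^\sim$ separating the points of $X$, $p(x)[y]=|y|(|x|)$) are exactly those of Theorem \ref{ideal is up-regular}; applying that theorem to $I_Z$ yields $z_\alpha\xrightarrow{up}0$ in $X$. Composing the two steps, $z_\alpha\xrightarrow{up}0$ in $Z$ implies $z_\alpha\xrightarrow{up}0$ in $X$, which is precisely $up$-regularity of $Z$. I expect no real obstacle here: the only step needing a line of justification is the description of $I_Z$, and everything afterwards is monotonicity of $p$ together with the already established ideal case. (It may be worth isolating the underlying principle: a sublattice is $up$-regular in $X$ whenever the ideal it generates is.)
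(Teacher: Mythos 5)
Your argument is correct and follows essentially the same route as the paper: pass to the ideal $I_Z$ generated by the sublattice, note that the sublattice majorizes $I_Z$ (the paper invokes Theorem \ref{$up$-regular}(i) here, where you re-derive the majorizing step directly), and then apply Theorem \ref{ideal is up-regular} to conclude $up$-convergence in $X$. No gaps.
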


\begin{proof}
	Let $X_0$ be a sublattice of $X$ and $x_\alpha$ be a net in $X_0$ such that $x_\alpha\xrightarrow[]{up} 0$ in $X_0$. Let $I_{X_0}$ be the ideal generated by $X_0$ in $X$. 
	Then $X_0$ is majorizing in $I_{X_0}$ and, by Theorem \ref{$up$-regular}(i), we get $x_\alpha\xrightarrow[]{up} 0$ in $I_{X_0}$. Now, Theorem \ref{ideal is up-regular} implies that $x_\alpha\xrightarrow[]{up} 0$ in $X$.
\end{proof}

\subsection{Order completion}
Denote by $X^\delta$ the order completion of a vector lattice $X$.

\begin{lemma}
	Let $(X^\delta,p,E)$ be an LNVL, where $X^\delta$ is the order completion of $X$. For any sublattice $Y\subseteq X$, if $Y^\delta$ is $up$-regular in $X^\delta$, then $Y$ is $up$-regular in $X=(X,p|_X,E)$.   
\end{lemma}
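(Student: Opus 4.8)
The plan is to unwind the definitions and transfer a $up$-null net in $Y$ up through the chain $Y \hookrightarrow Y^\delta \hookrightarrow X^\delta$ and then back down to $X$. Suppose $(y_\alpha)$ is a net in $Y$ with $y_\alpha \xrightarrow{up} 0$ in $Y = (Y, p|_Y, E)$; I want $y_\alpha \xrightarrow{up} 0$ in $X$, i.e.\ $p(|y_\alpha| \wedge u) \xrightarrow{o} 0$ in $E$ for every $u \in X_+$. The first step is to observe that $Y$ is $up$-regular in $Y^\delta$: indeed $Y$ is order dense — hence majorizing is too strong, but one checks that for the order completion $Y$ is \emph{order dense} and, more to the point, every $0 < v \in Y^\delta_+$ is dominated by some element of $Y$ only after passing to sups, so instead I invoke that $Y$ sits in $Y^\delta$ in a way that makes it $up$-regular. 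Concretely, for $0 \le v \in Y^\delta$ there is a net in $Y_+$ increasing to $v$, and using $|y_\alpha| \wedge v' \le |y_\alpha| \wedge v$ together with Lemma \ref{LO are $p$-continuous} and the inequality $|a \wedge c - b \wedge c| \le |a - b|$ one reduces testing at $v$ to testing against elements of $Y$. This is exactly the argument already run in the proof of Theorem \ref{ideal is up-regular} (the ``$I \oplus I^\perp$ order dense'' step), adapted to $Y \subseteq Y^\delta$.

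Once $y_\alpha \xrightarrow{up} 0$ in $Y^\delta$, the hypothesis that $Y^\delta$ is $up$-regular in $X^\delta$ gives $y_\alpha \xrightarrow{up} 0$ in $X^\delta = (X^\delta, p, E)$, meaning $p(|y_\alpha| \wedge w) \xrightarrow{o} 0$ in $E$ for every $w \in (X^\delta)_+$. The final step is the easy restriction: for any $u \in X_+ \subseteq (X^\delta)_+$ we have $p(|y_\alpha| \wedge u) \xrightarrow{o} 0$ in $E$ directly, where the meet $|y_\alpha| \wedge u$ is computed in $X$ — but since $X$ is a sublattice of $X^\delta$, this meet agrees with the one computed in $X^\delta$, so there is nothing to reconcile. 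Hence $y_\alpha \xrightarrow{up} 0$ in $X = (X, p|_X, E)$, which is what $up$-regularity of $Y$ in $X$ requires.

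\textbf{The main obstacle} I anticipate is justifying the first step cleanly — that $y_\alpha \xrightarrow{up} 0$ in $Y$ implies the same in $Y^\delta$. The subtlety is that elements of $Y^\delta_+$ need not be bounded above by elements of $Y$, so $Y$ is not majorizing in $Y^\delta$ and Theorem \ref{$up$-regular}(i) does not apply verbatim; what saves the argument is order density: every $w \in Y^\delta_+$ is the supremum of $\{y \in Y : 0 \le y \le w\}$, and by passing $o$-limits with the contraction inequality for $\wedge$ one still gets $p(|y_\alpha| \wedge w) \xrightarrow{o} 0$ from the family of estimates $p(|y_\alpha| \wedge y) \xrightarrow{o} 0$, $y \in Y_+$. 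One must be a little careful that the relevant limiting argument takes place in the order-complete lattice $E$ (or $E^\delta$) so that $\limsup_\alpha p(|y_\alpha| \wedge w) \le \limsup_\alpha p(|y_\alpha| \wedge y) + p(w - y) $ — wait, $p(w-y)$ need not be small since $w - y$ may be large in $Y^\delta$; the correct route is the one used in Theorem \ref{ideal is up-regular}, estimating via $|y|(|x_\alpha| \wedge w) \le |y|(w - w \wedge w_{\beta_0}) + |y|(|x_\alpha| \wedge w_{\beta_0})$ with $w_{\beta_0} \in Y$ chosen so the first term is small against the functional $y$. So the honest statement is that this lemma really only needs the sublattice analogue of that density argument, and I would simply cite the structure of that earlier proof rather than re-deriving it. Everything else is bookkeeping about sublattice inclusions.
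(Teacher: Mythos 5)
There is a genuine gap, and it sits exactly where you flagged your ``main obstacle.'' Your premise that ``elements of $Y^\delta_+$ need not be bounded above by elements of $Y$, so $Y$ is not majorizing in $Y^\delta$'' is false: for the Dedekind (order) completion one has $w=\sup\{y\in Y: y\le w\}=\inf\{y\in Y: y\ge w\}$ for every $w\in Y^\delta$, and the second identity already forces the set $\{y\in Y: y\ge w\}$ to be nonempty. So $Y$ \emph{is} majorizing in $Y^\delta$, and the first step is a one-line application of Theorem \ref{$up$-regular}(i) (or simply of the estimate $p(|y_\alpha|\wedge w)\le p(|y_\alpha|\wedge y)\xrightarrow{o}0$ for $w\le y\in Y_+$). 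This is precisely what the paper does; the remaining two steps of your argument (invoking $up$-regularity of $Y^\delta$ in $X^\delta$, then restricting the test vectors to $X_+\subseteq X^\delta_+$) are correct and identical to the paper's.

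The substitute you propose for the first step does not close the gap. The order-density argument requires controlling $p(w-y)$ as $Y_+\ni y\uparrow w$, which, as you yourself observe mid-argument, need not tend to $0$ unless $p$ is $op$-continuous on $Y^\delta$ --- and no such hypothesis is available here. The patch via the proof of Theorem \ref{ideal is up-regular} also does not transfer: that argument works because there $p(x)[y]=|y|(|x|)$ with $y$ ranging over \emph{order continuous} functionals, so the quantity $|y|(w-w\wedge w_{\beta_0})$ can be made small; for a general $E$-valued lattice norm there is no such functional to test against, and the limit interchange fails. In short, the correct (and much shorter) route is the majorization fact you rejected; the route you took instead cannot be completed at the stated level of generality.
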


\begin{proof}
	Take a net $(y_\alpha)_\alpha\subseteq Y$ such that $y_\alpha\xrightarrow{up}0$ in $Y$. 
	Then $p(|y_\alpha|\wedge u)\xrightarrow{o}0$ for all $u\in Y_+$. Let $w\in Y^\delta$ and, since $Y$ is majorizing in $Y^\delta$, 
	there exists $y\in Y$ such that $w\leq y$. Therefore, we obtain $y_\alpha\xrightarrow{up}0$ in $Y^\delta$. 
	Since $Y^\delta$ is $up$-regular in $X^\delta$, the net $y_\alpha$ is $up$-convergent to $0$ in $X^\delta$, 
	and so in $X$. 
\end{proof}

\begin{lemma}\label{$up$-regularinX}
	Let $(X^\delta,p,E) $ be an LNVL. For any sublattice $Y\subseteq X$, if $Y$ is $up$-regular in $X$, then $Y$ is $up$-regular in $X^\delta$. 
\end{lemma}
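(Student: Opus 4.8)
The plan is to reduce $up$-convergence in $X^\delta$ back to $up$-convergence in $X$ by exploiting the classical fact that $X$ sits inside its order completion $X^\delta$ as a majorizing sublattice. So, to verify that $Y$ is $up$-regular in $X^\delta$, start with an arbitrary net $(y_\alpha)_\alpha\subseteq Y$ with $y_\alpha\xrightarrow{up}0$ in $Y$. Since $Y$ is $up$-regular in $X$ by hypothesis, this already gives $y_\alpha\xrightarrow{up}0$ in $X$, i.e. $p(|y_\alpha|\wedge u)\xrightarrow{o}0$ in $E$ for every $u\in X_+$.

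Next, fix an arbitrary $w\in(X^\delta)_+$. Because $X$ is majorizing in $X^\delta$, there is $u\in X_+$ with $w\le u$. As $Y\subseteq X\subseteq X^\delta$ with matching lattice operations, for each $\alpha$ we have $0\le |y_\alpha|\wedge w\le |y_\alpha|\wedge u$ in $X^\delta$, so monotonicity of the lattice norm $p$ on $X^\delta$ yields $p(|y_\alpha|\wedge w)\le p(|y_\alpha|\wedge u)$. Since $p(|y_\alpha|\wedge u)\xrightarrow{o}0$ in $E$, it follows that $p(|y_\alpha|\wedge w)\xrightarrow{o}0$ in $E$ as well. As $w\in(X^\delta)_+$ was arbitrary, this is precisely $y_\alpha\xrightarrow{up}0$ in $X^\delta$, which is what $up$-regularity of $Y$ in $X^\delta$ requires.

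The only ingredient beyond unwinding the definitions is the standard structural fact that $X$ embeds as a majorizing (and order dense) sublattice of its Dedekind completion $X^\delta$; everything else is a single monotonicity estimate. For this reason I do not anticipate any genuine obstacle: the argument is essentially that of the preceding lemma run ``upward'' (from $X$ to $X^\delta$) rather than ``downward'', and one could equivalently phrase it using Theorem \ref{$up$-conv by $p$-unit} if a $p$-unit of $X^\delta$ happens to lie in $X$, but the majorizing argument is cleaner and needs no such assumption.
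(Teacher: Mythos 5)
Your argument is correct and coincides with the paper's own proof: both pass from $up$-convergence in $Y$ to $up$-convergence in $X$ via the hypothesis, then use that $X$ majorizes $X^\delta$ to dominate an arbitrary $w\in(X^\delta)_+$ by some $u\in X_+$ and conclude by monotonicity of $p$. Nothing further is needed.
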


\begin{proof}
	Let $(y_\alpha)_\alpha\subseteq Y$ and $y_\alpha\xrightarrow{up}0$ in $Y$. By assumption $y_\alpha\xrightarrow{up}0$ in $X$. 
	Let $u\in X^\delta_+$, then there exists $x\in X$ such that $u\leq x$. Therefore, we obtain $p(|y_\alpha|\wedge u)\leq p(|y_\alpha|\wedge x)\xrightarrow{o}0$, 
	i.e. $y_\alpha\xrightarrow{up}0$ in $X^\delta$.
\end{proof}

In connection with Lemma \ref{$up$-regularinX}, the following question arises. 

\begin{problem}
	Is it true that $I^\delta$ is $up$-regular in $X^\delta$, whenever $I$ is a $up$-regular ideal in $X$?
\end{problem}

\begin{proposition}\label{ordercomplete$p$}
	Let $(X,p,E)$ be an LNVL. Define $p^{\delta}_L:X^{\delta}\rightarrow E^{\delta}$ and $p^{\delta}_U:X^{\delta}\rightarrow E^{\delta}$ as follows: 
	$p^{\delta}_L(z)=\sup\limits_{0\leq x\leq |z|}p(x)$ and $p^{\delta}_U(z)=\inf\limits_{|z|\leq x}p(x)$ for all $z\in X^\delta$ $($clearly, both $p^{\delta}_U$ 
	and $p^{\delta}_L$ are extensions of $p$$)$. Then$:$
	\begin{enumerate}
		\item[$\it{(i)}$]    $p^{\delta}_L$ is a monotone $E^\delta$-valued norm;
		\item[$\it{(ii)}$]   $p^{\delta}_U$ is a monotone $E^\delta$-valued seminorm;
		\item[$\it{(iii)}$]  if $X$ is $op$-continuous, then $p^{\delta}_U$ is $p$-continuous $($i.e. $z_\gamma\downarrow 0$ in $X^\delta$ implies $p^{\delta}_U(z_\gamma)\downarrow 0$ in $E^\delta$$)$;
		\item[$\it{(iv)}$]   if $X$ is $op$-continuous, then $p_U^\delta=p_L^\delta$. 
	\end{enumerate}
\end{proposition}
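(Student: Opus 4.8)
The plan is to verify the four items by working directly with the sup/inf definitions, using standard manipulations of order-completions. For $(i)$, I would first check that $p_L^\delta$ is well-defined and finite: given $z \in X^\delta$, since $X$ is order dense and majorizing in $X^\delta$, pick $x_0 \in X$ with $|z| \le x_0$; then $\{p(x) : 0 \le x \le |z|\}$ is bounded above by $p(x_0)$ in $E^\delta$ (using monotonicity of $p$ and order completeness of $E^\delta$), so the sup exists. Positive homogeneity is immediate from that of $p$. For the triangle inequality I would use that $0 \le x \le |z_1 + z_2| \le |z_1| + |z_2|$ admits, in the order-complete lattice $X^\delta$, a Riesz decomposition $x = x_1 + x_2$ with $0 \le x_i \le |z_i|$; then $p(x) = p(x_1 + x_2) \le p(x_1) + p(x_2) \le p_L^\delta(z_1) + p_L^\delta(z_2)$, and taking the sup over $x$ gives the claim. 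Definiteness ($p_L^\delta(z) = 0 \Rightarrow z = 0$): if $z \ne 0$ then $0 \ne |z| \in X^\delta$; by order density of $X$ in $X^\delta$ there is $0 < x \le |z|$ with $x \in X$, whence $p_L^\delta(z) \ge p(x) > 0$. Monotonicity of $p_L^\delta$ is clear since $|z| \le |w|$ enlarges the indexing set. Finally, $p_L^\delta$ extends $p$: for $z \in X$, the sup over $0 \le x \le |z|$ is attained at $x = |z|$ by monotonicity of $p$.

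For $(ii)$, the argument is dual and slightly easier: $p_U^\delta(z) = \inf\{p(x) : x \in X,\ |z| \le x\}$ is a nonempty set bounded below by $0$, so the inf exists in $E^\delta$; positive homogeneity and monotonicity are routine; the triangle inequality follows by picking $x_i \in X$ with $|z_i| \le x_i$, noting $|z_1 + z_2| \le x_1 + x_2$, so $p_U^\delta(z_1 + z_2) \le p(x_1 + x_2) \le p(x_1) + p(x_2)$, then taking infima. That $p_U^\delta$ is only a seminorm is because definiteness can fail (there may be nonzero $z \in X^\delta$ that can only be dominated by elements $x \in X$ with $p(x)$ forced above $0$, but the infimum over all such $x$ could be $0$ — indeed this is exactly what $(iv)$ will show collapses in the $op$-continuous case). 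That $p_U^\delta$ extends $p$ is again by attainment at $x = |z|$ for $z \in X$.

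For $(iii)$, assume $X$ is $op$-continuous and let $z_\gamma \downarrow 0$ in $X^\delta$. Monotonicity of $p_U^\delta$ gives $p_U^\delta(z_\gamma) \downarrow$ in $E^\delta$; let $e = \inf_\gamma p_U^\delta(z_\gamma) \ge 0$, and I must show $e = 0$. Fix any $\gamma_0$; by definition of the infimum defining $p_U^\delta(z_{\gamma_0})$, and using order density of $X$ in $X^\delta$, I would produce for each $\gamma \ge \gamma_0$ an element $x_\gamma \in X$ with $z_\gamma \le x_\gamma$ and $p(x_\gamma)$ close to $p_U^\delta(z_\gamma)$; a cleaner route is to replace $z_\gamma$ by an approximating net from above in $X$: since $X^\delta$ is the order completion, for each $\gamma$ there is a net in $X$ decreasing to $z_\gamma$, and by a standard diagonal/directedness argument one can find a single net $(w_\lambda)_\lambda$ in $X_+$ with $w_\lambda \downarrow 0$ in $X^\delta$ (hence in $X$ on the part that lies in $X$) and $p_U^\delta(z_\gamma) \le p(w_\lambda)$ eventually. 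Then $op$-continuity of $X$ (via Lemma~\ref{$op$-cont-0}) gives $p(w_\lambda) \downarrow 0$ in $E$, hence $e \le p(w_\lambda) \downarrow 0$ in $E^\delta$, so $e = 0$. The main obstacle is precisely this step: assembling the individual dominating nets for the $z_\gamma$ into one monotone net $w_\lambda \downarrow 0$ in $X_+$ compatible with all the $p_U^\delta(z_\gamma)$ simultaneously; I expect to handle it by indexing over pairs (an index $\gamma$ together with a dominating element of $X$) and using cofinality, much as in the proof of \cite[Lm.12.8]{AB} invoked in Theorem~\ref{op-contchar}.

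For $(iv)$, with $X$ $op$-continuous, clearly $p_L^\delta \le p_U^\delta$ always (if $0 \le x \le |z| \le x'$ with $x' \in X$ then $p(x) \le p(x')$, so $\sup \le \inf$). For the reverse, fix $z \in X^\delta$; choose a net $u_\lambda \in X_+$ with $u_\lambda \downarrow |z|$ in $X^\delta$ (possible since $X$ is order dense in $X^\delta$ and $|z|$ has an upper bound in $X$). Then $u_\lambda - |z| \downarrow 0$ in $X^\delta$, so by $(iii)$ applied to this net, $p_U^\delta(u_\lambda - |z|) \downarrow 0$. Now $p_U^\delta(|z|) \le p(u_\lambda) = p_U^\delta(u_\lambda) \le p_U^\delta(u_\lambda - |z|) + p_U^\delta(|z|)$ is not quite what I want; instead I would bound $p_U^\delta(z) = p_U^\delta(|z|) \le p(u_\lambda)$ and separately show $p(u_\lambda) \le p_L^\delta(z) + p_U^\delta(u_\lambda - |z|)$: indeed $u_\lambda \le |z| + (u_\lambda - |z|)$ need not split in $X$, but writing $u_\lambda = u_\lambda \wedge |z| + (u_\lambda - |z|)^+$ in $X^\delta$ gives $u_\lambda \le |z| + (u_\lambda - |z|)$ with the first summand $\le |z|$, so any $x' \in X$ dominating $(u_\lambda - |z|)$ yields $u_\lambda \le |z| + x'$ hence $p(u_\lambda) = p_U^\delta(u_\lambda) \le p_L^\delta(|z|) + p(x')$ — wait, $p_U^\delta(u_\lambda)$ is bounded using a dominating element of $u_\lambda$, and $u_\lambda \in X$, so $p_U^\delta(u_\lambda) = p(u_\lambda)$. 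Taking the infimum over $x'$ dominating $u_\lambda - |z|$ gives $p(u_\lambda) \le p_L^\delta(z) + p_U^\delta(u_\lambda - |z|)$, and letting $\lambda$ run, the last term goes to $0$ while $p(u_\lambda) \downarrow p_U^\delta(z)$ (from $(iii)$ applied to $u_\lambda \downarrow |z|$, noting $p_U^\delta(u_\lambda) = p(u_\lambda)$ and $\inf_\lambda p(u_\lambda) \ge p_U^\delta(|z|)$ with equality by definition of the infimum). Hence $p_U^\delta(z) \le p_L^\delta(z)$, giving equality.
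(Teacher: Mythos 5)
Your route is essentially the paper's. Items $(i)$ and $(ii)$ are checked directly from the definitions exactly as in the paper; for $(iii)$ your plan of ``indexing over pairs $(\gamma,a)$ with $z_\gamma\le a\in X$'' is precisely the paper's device, namely the downward directed set $A=\{a\in X: z_\gamma\le a\ \text{for some}\ \gamma\}$, whose infimum is $0$ by order density of $X$ in $X^\delta$, so that $op$-continuity yields $p(A)\downarrow 0$ and hence $p_U^\delta(z_\gamma)\downarrow 0$; this works and is not really an obstacle. In $(iv)$ you deviate mildly: the paper approximates $|z|$ from below by $\{x\in X:0\le x\le|z|\}\uparrow|z|$ and applies $(iii)$ to $|z|-x$, identifying $p_U^\delta(|z|)$ with $\sup p(x)=p_L^\delta(z)$ in one line, whereas you approximate from above by $u_\lambda\downarrow|z|$ with $u_\lambda\in X$ and apply $(iii)$ to $u_\lambda-|z|$. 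Your version also works, but the inequality $p(u_\lambda)\le p_L^\delta(z)+p(x')$ needs the decomposition $u_\lambda\le(u_\lambda-x')^{+}+x'$ with $(u_\lambda-x')^{+}\in X$ and $(u_\lambda-x')^{+}\le|z|$; the identity $u_\lambda=u_\lambda\wedge|z|+(u_\lambda-|z|)^{+}$ that you write down does not by itself give it, since $p$ cannot be evaluated at $|z|$.

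There is one genuine gap, in the triangle inequality of $(i)$ --- and the paper's own proof has the identical defect. The Riesz decomposition of $0\le x\le|z|+|w|$ takes place in $X^\delta$, so it only produces $x_1,x_2\in X^\delta$ with $0\le x_1\le|z|$ and $0\le x_2\le|w|$; you then write $p(x_1)+p(x_2)$, but $p$ is undefined off $X$, and in general the components cannot be chosen inside $X$ (the paper simply asserts $x_1,x_2\in X$). Concretely, for $X=c$ one has $X^\delta=\ell_\infty$; with $|z|=\chi_{2\mathbb{N}}$, $|w|=\chi_{2\mathbb{N}+1}$ and $x=\mathbf{1}$ the only admissible decomposition is $x_1=\chi_{2\mathbb{N}}$, $x_2=\chi_{2\mathbb{N}+1}$, neither of which lies in $c$. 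Moreover, for the monotone norm $p(x)=\frac{1}{2}\lVert x\rVert_\infty+|\lim_n x_n|$ on $c$, every $x\in c$ with $0\le x\le\chi_{2\mathbb{N}}$ has $\lim_n x_n=0$, so $p_L^\delta(\chi_{2\mathbb{N}})=p_L^\delta(\chi_{2\mathbb{N}+1})=\frac{1}{2}$, while $p_L^\delta(\mathbf{1})\ge p(\mathbf{1})=\frac{3}{2}$: subadditivity of $p_L^\delta$ genuinely fails here. So this step does not merely need more detail; as stated it would fail, and some extra hypothesis is required (under $op$-continuity one can approximate $x_1$ and $x_2$ from below by elements $y_\mu, v_\nu$ of $X$ and use $p(x-y_\mu-v_\nu)\downarrow 0$ to repair the argument; in that case $(i)$ also follows from $(ii)$ together with $(iv)$).
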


\begin{proof}
	$(i)$ Let $X^{\delta}\ni z\neq 0$. Since $X$ is order dense in $X^\delta$, there is $x\in X$ such that $0<x\leq|z|$, and so $p^{\delta}_L(z)\ge p(x)>0$. 
	
	Let $0\neq\lambda\in\mathbb{R}$, then
	$$
	p^{\delta}_L(\lambda z)=\sup\limits_{0\leq x\leq|\lambda z|}p(x)=\sup\limits_{0\leq\frac{1}{|\lambda|}x\leq |z|}p(x)=|\lambda|\sup\limits_{0\leq\frac{1}{|\lambda|}x\leq |z|}p({|\lambda|}^{-1}x)=|\lambda|p^\delta_L(z).
	$$
	
	Let $z,w\in X^\delta$, we show $p^{\delta}_L(z+w)\leq p^{\delta}_L(z)+p^{\delta}_L(w)$. Suppose $0\leq x\leq |z+w|$, then $0\leq x\leq |z|+|w|$. 
	By the Riesz Decomposition Property, there exist $x_1,x_2\in X$ such that $0\leq x_1\leq |z|$, $0\leq x_2\leq |w|$, and $x=x_1+x_2$. So 
	$$
	p(x)=p(x_1+x_2)\leq p(x_1)+p(x_2)\leq p^{\delta}_L(z)+p^{\delta}_L(w).
	$$ 
	Thus $p^{\delta}_L(z+w)=\sup\limits_{0\leq x\leq|z+w|}p(x)\leq p^{\delta}_L(z)+p^{\delta}_L(w)$.
	
	Now, we prove the monotonicity of the lattice norm $p_L^\delta$.
	If $|z|\leq|w|$ then, for any $x\in X$ with $0\leq x\leq|z|$, we get $0\leq x\leq|w|$. 
	So $\displaystyle\sup_{0\leq x\leq|z|}p(x)\leq\sup_{0\leq x\leq|w|}p(x)$ or $p_L^\delta(z)\leq p_L^\delta(w)$. 
	
	$(ii)$ We show firstly the triangle inequality. Let $z,w\in X^\delta$ and $x_1,x_2 \in X$ be such that $|z|\leq x_1$ and $|w|\leq x_2$, 
	then $|z+w|\leq|z|+|w|\leq x_1+x_2$. So 
	$$ 
	p^{\delta}_U(z+w)=\inf\limits_{|z+w|\leq x}p(x)\leq p(x_1+x_2)\leq p(x_1)+p(x_2).
	$$ 
	Thus $p^{\delta}_U(z+w)-p(x_1)\leq p(x_2)$ for any $x_2\in X$ with $|w|\leq x_2$. 
	Hence $p^{\delta}_U(z+w)-p(x_1)\leq p^{\delta}_U(w)$ or $p^{\delta}_U(z+w)-p^{\delta}_U(w)\leq p(x_1)$, 
	which holds for all $x_1\in X$ with $|z|\leq x_1$. 
	Therefore, $p^{\delta}_U(z+w)-p^{\delta}_U(w)\leq p^{\delta}_U(z)$ or $p^{\delta}_U(z+w)\leq p^{\delta}_U(w)+ p^{\delta}_U(z)$.
	
	Now, if $|z|\leq|w|$, then, for any $x\in X$ with $0<|w|\leq x$, we have $|z|\leq x$. So $\displaystyle\inf_{|w|\leq x}p(x)\geq\inf_{|z|\leq x}p(x)$ or $p_U^\delta(z)\leq p_U^\delta(w)$.  
	
	$(iii)$ Assume $z_\gamma\downarrow 0$ in $X^\delta$. Let $A=\{a\in X:z_\gamma\leq a\ \text{for}\ \text{some} \ \gamma\}$. Then $\inf A=0$. Indeed, if $0\leq x\leq a$ for all $a\in A$, 
	then $0\leq x\leq A_\gamma$ for all $\gamma$, where $A_\gamma=\{a\in X:z_\gamma\leq a\}$. So, by \cite[Lm.2.7]{GTX}, we have $x\leq z_\alpha$. Thus $x=0$.
	
	Clearly, $A$ is directed downward and dominates the net $(z_\alpha)_\alpha$. Since $X$ is $op$-continuous, then $p(A)\downarrow 0$ and, by the definition of $p_U^\delta$, 
	we get that $p(A)$ dominates the net $(p_U^\delta z_\alpha)$. Therefore, $p_U^\delta z_\alpha\downarrow 0$. 
	
	$(iv)$ Let $z\in X^\delta$, then $\displaystyle |z| = \sup \{x \in X: 0 \leq x \leq |z|\}$. By $(iii)$, we have
	$$
	p_U^\delta (z) = p_U^\delta (|z|) = \sup \{p_U^\delta(x): x \in X, 0 \leq x \leq |z|\} 
	$$
	$$
	=\sup \{p(x): x \in X, 0 \leq x \leq |z|\} =p_L^\delta (z).
	$$
\end{proof}

In connection with Proposition \ref{ordercomplete$p$}$(iv)$, the following question arises.

\begin{problem}
	Does the equality $p_U^\delta=p_L^\delta$ imply the $op$-continuity of $X$?
\end{problem}

\begin{proposition}
	Let $(X,p,E)$ be an LNVL. Then, for every net $x_\alpha$ in $X$,
	\begin{equation*}
	\begin{split}
	x_\alpha\xrightarrow{up}0 \ \ in \ (X,p,E) \ \ \Leftrightarrow \ \ x_\alpha\xrightarrow{up}0 \ \ in\  (X^\delta,p^{\delta},E^\delta),
	\end{split}
	\end{equation*}
	where $p^{\delta} = p^{\delta}_L$. 
\end{proposition}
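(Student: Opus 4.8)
The plan is to prove both implications by a direct monotonicity/domination argument, leaning on three ingredients: (a) $X$ embeds in its order completion $X^\delta$ as an order dense \emph{and majorizing} sublattice, so that lattice operations in $X$ and in $X^\delta$ agree and every element of $X^\delta_+$ is dominated by one from $X_+$; (b) $p^\delta:=p^\delta_L$ extends $p$ and is monotone, by Proposition \ref{ordercomplete$p$}$(i)$; and (c) for a net that lies in $E$, order convergence to $0$ is the same whether tested in $E$ or in $E^\delta$ (the fact recalled just before Theorem \ref{$up$-conv by $p$-unit}, cf. \cite[Cor.2.9]{GTX}).

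For ``$\Leftarrow$'' I would fix $u\in X_+$, regard it as an element of $X^\delta_+$, and read off from $x_\alpha\xrightarrow{up}0$ in $X^\delta$ that $p^\delta(|x_\alpha|\wedge u)\xrightarrow{o}0$ in $E^\delta$. Since $|x_\alpha|\wedge u$ is already an element of $X$ and $p^\delta$ restricts to $p$ on $X$, this net is in fact the net $p(|x_\alpha|\wedge u)$ sitting in $E$, and (c) upgrades its $o$-convergence so that it holds in $E$. As $u$ ranges over $X_+$ this is exactly $x_\alpha\xrightarrow{up}0$ in $(X,p,E)$.

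For ``$\Rightarrow$'' I would fix $w\in X^\delta_+$, use the majorizing property to pick $u\in X_+$ with $w\le u$, and note $|x_\alpha|\wedge w\le|x_\alpha|\wedge u$ in $X^\delta$; monotonicity of $p^\delta$ then sandwiches the net, $0\le p^\delta(|x_\alpha|\wedge w)\le p^\delta(|x_\alpha|\wedge u)=p(|x_\alpha|\wedge u)$. The right-hand net is $o$-null in $E$ by hypothesis, hence $o$-null in $E^\delta$ by (c), and a net in $E^\delta_+$ squeezed below an $o$-null net is itself $o$-null; letting $w$ vary over $X^\delta_+$ gives $x_\alpha\xrightarrow{up}0$ in $(X^\delta,p^\delta,E^\delta)$.

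I do not anticipate a real obstacle here; the only places that call for care are the bookkeeping points that the majorizing property of $X$ in $X^\delta$ lets one test $up$-convergence in $X^\delta$ against the smaller set $X_+$, that $p^\delta_L$ is a genuine monotone extension of $p$, and that the order completion of $E$ is invisible to $o$-nullity of a net of $p$-values — all of which are already in hand from Proposition \ref{ordercomplete$p$} and \cite{GTX}.
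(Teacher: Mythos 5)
Your proposal is correct and follows essentially the same route as the paper: both directions reduce testing $up$-convergence in $X^\delta$ to testing against $X_+$ via the majorizing property, use that $p^\delta_L$ is a monotone extension of $p$, and invoke \cite[Cor.2.9]{GTX} to pass $o$-nullity of the nets $p(|x_\alpha|\wedge u)$ back and forth between $E$ and $E^\delta$. No gaps.
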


\begin{proof}
	Assume $x_\alpha\xrightarrow{up}0$ in $(X,p,E)$. Then $p(|x_\alpha|\wedge x)\xrightarrow{o}0$ in $E$ for all $x\in X_+$, and so $p(|x_\alpha|\wedge x)\xrightarrow{o}0$ 
	in $E^\delta$ for all $x\in X_+$, by \cite[Cor.2.9]{GTX}.  Hence 	
	\begin{equation}\label{eqn1}
	p^\delta(|x_\alpha|\wedge x)\xrightarrow{o}0 
	\end{equation} 
	in $E^{\delta}$ for all $x\in X_+$. Let $u\in X^\delta_+$, then there exists $x_u\in X_+$ such that $u\leq x_u$, since $X$ majorizes $X^\delta$. 
	From (\ref{eqn1}) it follows that $p^\delta(|x_\alpha|\wedge u)\xrightarrow{o}0$ in $E^\delta$. Since $u\in X^\delta_+$ is arbitrary, then 
	$x_\alpha\xrightarrow{up}0$ in $(X^\delta,p^\delta,E^\delta)$.
	
	Conversely, assume $x_\alpha\xrightarrow{up}0$ in $(X^\delta,p^\delta,E^\delta)$ then, for all $u\in X^\delta_+$, $p^\delta(|x_\alpha|\wedge u)\xrightarrow{o}0$ in $E^\delta$. 
	In particular, for all $x\in X_+$, $p(|x_\alpha|\wedge x)=p^\delta(|x_\alpha|\wedge x)\xrightarrow{o}0$ in $E^\delta$. 
	By \cite[Cor.2.9]{GTX}, $p(|x_\alpha|\wedge x)\xrightarrow{o}0$ in $E$ for all $x\in X_+$. Hence $x_\alpha\xrightarrow{up}0$ in $(X,p,E)$. 
\end{proof}

\section{Mixed-normed spaces}

In this section, we study LNVLs with mixed lattice norms.

\subsection{Mixed norms}
Let $(X,p,E)$ be an LNS and $(E,\lVert\cdot\rVert)$ be a normed lattice. The \textit{mixed norm} on $X$ is defined by 
$$
p\text{-}\lVert x\rVert=\lVert p(x)\rVert \ \ \ (\forall x\in X).
$$ 
In this case the normed space $(X,p\text{-}\lVert\cdot\rVert)$ is called a \textit{mixed-normed space} (see, for example \cite[7.1.1, p.292]{K})

The next proposition follows directly from the basic definitions and results, so its proof is omitted.

\begin{proposition}\label{mixed}
	Let $(X,p,E)$ be an LNVL, $(E,\lVert\cdot\rVert)$ be a Banach lattice, and $(X,p\text{-}\lVert\cdot\rVert)$ be a mixed-normed space. The following statements hold:
	\begin{enumerate} 
		\item[(i)] if $(X,p,E)$ is $op$-continuous and $E$ is order continuous, then $(X,p\text{-}\lVert \cdot\rVert)$ is an order continuous normed lattice$;$
		\item[(ii)] if a subset $Y$ of $X$ is $p$-bounded (respectively, $p$-dense) in $(X,p,E)$, then $Y$ is norm bounded (respectively, norm dense) in  $(X,p\text{-}\lVert\cdot\rVert)$$;$
		\item[(iii)] if $e\in X$ is a $p$-unit and $E$ is order continuous, then $e$ is a quasi-interior point of  $(X,p\text{-}\lVert \cdot\rVert)$$;$
		\item[(iv)] if $(X,p,E)$ is a $p$-Fatou space and $E$ is order continuous, then  $p\text{-}\lVert \cdot\rVert$ is a Fatou norm, \cite[p.42]{LZ1}$;$
		\item[(v)] if $Y$ is a $p$-almost order bounded subset of $X$, then $Y$ is almost order bounded set in $(X,p\text{-}\lVert \cdot\rVert)$.
	\end{enumerate}
\end{proposition}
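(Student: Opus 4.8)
The plan is to dispatch the five claims one at a time, each by the same two-move pattern: rewrite the hypothesis on $(X,p,E)$ as a statement about the net or family $\{p(x)\}\subseteq E$, then feed it into an elementary fact about the Banach lattice $E$ and read the conclusion off through the definition $p\text{-}\|x\|=\|p(x)\|$. Before starting I would record two trivial observations used throughout: first, since $p$ is monotone and $\|\cdot\|$ is a monotone norm on $E$, the implication $|x|\le|y|\Rightarrow p\text{-}\|x\|=\|p(x)\|\le\|p(y)\|=p\text{-}\|y\|$ shows $p\text{-}\|\cdot\|$ is indeed a lattice norm on $X$; second, $p(X)$ is stable under multiplication by positive scalars, because $p(\lambda x)=|\lambda|p(x)$. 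If $X=\{0\}$ all five statements are vacuous, so assume $X\neq\{0\}$, which also forces $E_+\neq\{0\}$ since $p$ maps nonzero vectors to nonzero elements of $E_+$.

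For (i): order continuity of $(X,p\text{-}\|\cdot\|)$ means $x_\alpha\downarrow 0$ in $X$ implies $p\text{-}\|x_\alpha\|\to 0$. Given $x_\alpha\downarrow 0$, Lemma \ref{$op$-cont-0} (applicable since $X$ is $op$-continuous) gives $p(x_\alpha)\downarrow 0$ in $E$, and order continuity of $E$ then yields $\|p(x_\alpha)\|\to 0$. For (ii): if $p(y)\le e\in E$ for all $y\in Y$, then $0\le p(y)\le e$, so $p\text{-}\|y\|\le\|e\|$ by monotonicity of the norm of $E$, i.e. $Y$ is norm bounded. For $p$-density, fix $a\in X$ and $\varepsilon>0$, pick a nonzero $x_0\in X$ and a scalar $\lambda\neq 0$ with $|\lambda|\,\|p(x_0)\|\le\varepsilon$, and set $u:=p(\lambda x_0)$; then $0\neq u\in p(X)$ and $\|u\|\le\varepsilon$, so a $b\in Y$ with $p(a-b)\le u$ (supplied by $p$-density of $Y$) satisfies $p\text{-}\|a-b\|\le\|u\|\le\varepsilon$, proving $Y$ is norm dense.

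For (iii): by Remark \ref{properties of $p$-units}$(1)$ a $p$-unit $e$ of $X\neq\{0\}$ satisfies $e>0$, and $e$ being a $p$-unit means $p(x-x\wedge ne)\xrightarrow{o}0$ in $E$ for every $x\in X_+$; order continuity of $E$ converts this to $p\text{-}\|x-x\wedge ne\|\to 0$, which is exactly the statement that $e$ is a quasi-interior point of $(X,p\text{-}\|\cdot\|)$. For (iv): given $0\le x_\alpha\uparrow x$ in $X$, the $p$-Fatou property gives $p(x_\alpha)\uparrow p(x)$ in $E$; since an order continuous norm is automatically a Fatou norm, $\|p(x_\alpha)\|\uparrow\|p(x)\|$, i.e. $p\text{-}\|x_\alpha\|\uparrow p\text{-}\|x\|$, so $p\text{-}\|\cdot\|$ is Fatou. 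For (v): given $\varepsilon>0$, choose $w\in E_+$ with $\|w\|\le\varepsilon$ by scaling any nonzero element of $E_+$; $p$-almost order boundedness of $Y$ produces $x_w\in X_+$ with $p\bigl((|x|-x_w)^+\bigr)\le w$ for all $x\in Y$, whence $p\text{-}\|(|x|-x_w)^+\|\le\|w\|\le\varepsilon$, so $x_w$ witnesses that $Y$ is almost order bounded in $(X,p\text{-}\|\cdot\|)$.

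None of these steps presents a genuine obstacle — which is exactly why the text omits the proof. The only points requiring mild care are (ii) and (v), where one must manufacture an element of $p(X)$, respectively of $E_+$, with prescribed small norm; this is where stability of $p(X)$ under positive scaling and the non-triviality of $X$ (hence of $E$) enter. Everything else is the mechanical translation described above, together with the standard facts that in an order continuous normed lattice order convergence implies norm convergence and the norm is a Fatou norm.
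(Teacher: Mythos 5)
Your proof is correct and is precisely the direct verification from the definitions that the paper has in mind when it omits the proof: each part reduces the hypothesis to a statement about $p(x)$ in $E$ and then applies monotonicity or order continuity of $\lVert\cdot\rVert$ (including the standard facts that an order continuous norm sends $o$-null nets to norm-null nets and is a Fatou norm). The care you take in (ii) and (v) to produce a nonzero $u\in p(X)$, respectively a $w\in E_+$, of small norm via positive scaling is exactly the right (and only nontrivial) point.
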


\begin{theorem}\label{pKB-pKB}
	Let $(X,p,E)$ and $(E,m,F)$ be two $p$-$KB$-spaces. Then the LNVL $(X,m\circ p,F)$ is also a $p$-KB-space.
\end{theorem}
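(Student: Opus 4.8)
The plan is to strip off the two lattice norms one at a time, invoking the $p$-KB property of each factor in turn, together with the fact (Corollary \ref{p-KB is op}) that every $p$-KB-space is automatically $op$-continuous. Before anything else one checks that $(X,m\circ p,F)$ is indeed an LNVL: the vector-norm axioms for $m\circ p$ follow at once from those of $p$ and $m$ (for the triangle inequality one uses monotonicity of $m$, namely $m(p(x+y))\le m(p(x)+p(y))\le m(p(x))+m(p(y))$), and $m\circ p$ is monotone because both $p$ and $m$ are; this I would dispose of in one line.

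Now fix an increasing net $0\le x_\alpha\uparrow$ in $X_+$ that is $(m\circ p)$-bounded, say $m(p(x_\alpha))\le\varphi\in F$ for all $\alpha$; the goal is to produce $x\in X$ with $m(p(x_\alpha-x))\xrightarrow{o}0$ in $F$. The first step works inside $(E,m,F)$. Since $p$ is monotone and $x_\alpha\uparrow$ in $X_+$, the net $(p(x_\alpha))_\alpha$ is increasing in $E_+$, and by hypothesis it is $m$-bounded by $\varphi$. As $(E,m,F)$ is a $p$-KB-space, there is $e\in E$ with $p(x_\alpha)\xrightarrow{m}e$, i.e. $m(p(x_\alpha)-e)\xrightarrow{o}0$ in $F$; Proposition \ref{$p$-sup-inf} then upgrades this monotone $m$-convergence to $p(x_\alpha)\uparrow e$ in $E$, so in particular $p(x_\alpha)\le e$ for every $\alpha$.

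The second step works inside $(X,p,E)$. By what was just shown, $(x_\alpha)$ is an increasing net in $X_+$ that is $p$-bounded by $e\in E$; since $(X,p,E)$ is a $p$-KB-space, there exists $x\in X$ with $p(x_\alpha-x)\xrightarrow{o}0$ in $E$. Finally, $(E,m,F)$, being a $p$-KB-space, is $op$-continuous by Corollary \ref{p-KB is op}, so from the order-null net $p(x_\alpha-x)$ in $E$ we get $m(p(x_\alpha-x))\xrightarrow{o}0$ in $F$, that is, $(m\circ p)(x_\alpha-x)\xrightarrow{o}0$. Thus every $(m\circ p)$-bounded increasing net in $X_+$ is $(m\circ p)$-convergent, and $(X,m\circ p,F)$ is a $p$-KB-space.

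I do not expect a genuine obstacle here. The two points that must not be glossed over are: first, the passage from $p(x_\alpha)\xrightarrow{m}e$ to $p(x_\alpha)\uparrow e$ via Proposition \ref{$p$-sup-inf}, since it is precisely this that converts the $m$-boundedness of $(p(x_\alpha))$ in $F$ into an honest $E$-bound $p(x_\alpha)\le e$, which is what licenses the use of the $p$-KB property of $(X,p,E)$; and second, the closing step, which leans on the (not completely obvious) fact that the $p$-KB hypothesis on $(E,m,F)$ already delivers its $op$-continuity.
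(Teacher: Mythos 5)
Your proof is correct and follows essentially the same route as the paper's: pass to the increasing net $p(x_\alpha)$ in $E$, use the $p$-KB property of $(E,m,F)$ plus Proposition \ref{$p$-sup-inf} to get an $E$-bound $e$, apply the $p$-KB property of $(X,p,E)$, and finish by $op$-continuity. If anything, your closing step is stated more carefully than the paper's (which appeals somewhat loosely to ``$po$-continuity of $(X,m\circ p,F)$'' where what is really used is the $op$-continuity of $(E,m,F)$ coming from Corollary \ref{p-KB is op}).
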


\begin{proof}
	Let $0\leq x_\alpha\uparrow$ and $m\big(p(x_\alpha)\big)\le g\in F$. Since $0\leq p(x_\alpha)\uparrow<\infty$
	and since $(E,m,F)$ is a $p$-KB-space, then there exists $y\in E$ such that $m\big(p(x_\alpha)-y)\big)\to 0$. 
	Hence $p(x_\alpha)\uparrow y$. Thus the net $x_\alpha$ is increasing and $p$-bounded. 
	Since $X$ is $p$-$KB$-space, then there exists $x\in X$ such that $p(x_\alpha-x)\to 0$. 
	As $(X,m\circ p,F)$ is clearly $po$-continuous, then $m\big(p(x_\alpha-x)\big)\xrightarrow{o}0$ i.e. 
	$m\circ p(x_\alpha-x)\xrightarrow{o}0$. Thus $(X,m\circ p,F)$ is a $p$-KB-space.
\end{proof}

\begin{corollary}\label{pKB-KB}
	Let $(X,p,E)$ be a $p$-$KB$-space and $(E,\lVert\cdot\rVert)$ be a KB-space. Then $(X,p\text{-}\lVert\cdot\rVert)$ is a KB-space.
\end{corollary}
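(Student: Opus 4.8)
The plan is to obtain this statement as the special case $F=\mathbb{R}$ of Theorem \ref{pKB-pKB}. First I would regard the KB-space $(E,\lVert\cdot\rVert)$ as the LNVL $(E,\lVert\cdot\rVert,\mathbb{R})$ in the sense of Example \ref{ExLNVL_2}. By Remark \ref{on $p$-notions}(4), an $\mathbb{R}$-valued $p$-KB-space is exactly a KB-space (and is automatically $p$-complete), so the hypothesis that $(E,\lVert\cdot\rVert)$ is a KB-space says precisely that $(E,\lVert\cdot\rVert,\mathbb{R})$ is a $p$-KB-space. Thus the hypotheses of Theorem \ref{pKB-pKB} are met with $(X,p,E)$ the given $p$-KB-space and $(E,m,F)=(E,\lVert\cdot\rVert,\mathbb{R})$.

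Applying Theorem \ref{pKB-pKB}, the LNVL $(X,\lVert\cdot\rVert\circ p,\mathbb{R})$ is a $p$-KB-space. By the very definition of the mixed norm, $(\lVert\cdot\rVert\circ p)(x)=\lVert p(x)\rVert=p\text{-}\lVert x\rVert$ for every $x\in X$, so $\lVert\cdot\rVert\circ p$ is nothing but the mixed norm $p\text{-}\lVert\cdot\rVert$, and hence $(X,p\text{-}\lVert\cdot\rVert,\mathbb{R})$ is a $p$-KB-space. Invoking Remark \ref{on $p$-notions}(4) once more, this is exactly the assertion that the normed lattice $(X,p\text{-}\lVert\cdot\rVert)$ is a KB-space, which is the claim.

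Since every step is a direct translation of definitions, I do not expect a genuine obstacle here; the only point that deserves a word of care is the identification in Remark \ref{on $p$-notions}(4) that for $\mathbb{R}$-valued lattice norms the a priori weaker notion of $p$-KB-space coincides with the classical notion of KB-space (in particular already entails $p$-completeness). That identification is what allows Theorem \ref{pKB-pKB} to be applied with $F=\mathbb{R}$ and what allows the conclusion to be read back as a statement about KB-spaces in the usual sense.
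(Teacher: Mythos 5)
Your proof is correct and is exactly the derivation the paper intends: the corollary is stated without proof as an immediate specialization of Theorem \ref{pKB-pKB} with $(E,m,F)=(E,\lVert\cdot\rVert,\mathbb{R})$, using the identification of Remark \ref{on $p$-notions}(4) on both ends, just as you do. Your explicit remark that this identification also supplies the norm completeness needed for the conclusion to be a KB-space in the classical sense is the right point to flag.
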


\subsection{$up$-Completeness}
The following well-known technical lemma is a particular case of Lemma \ref{ptechlemma2}. 

\begin{lemma}\label{techlemma2}
	Given a Banach lattice $(X,\left\|\cdot\right\|)$. If $x_\alpha\xrightarrow{\left\|\cdot\right\|}x$  and $x_\alpha$ is $o$-Cauchy, then $x_\alpha\xrightarrow{o}x$.
\end{lemma}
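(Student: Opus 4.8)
The plan is to recognize this as nothing more than the specialization of Lemma \ref{ptechlemma2} to the LNVL of Example \ref{ExLNVL_2}. Indeed, a Banach lattice $(X,\|\cdot\|)$ is the LNVL $(X,\|\cdot\|,\mathbb{R})$, and for this triple the $p$-convergence $p(x_\alpha-x)\xrightarrow{o}0$ in $\mathbb{R}$ is exactly norm convergence $\|x_\alpha-x\|\to 0$. Hence, assuming $x_\alpha\xrightarrow{\|\cdot\|}x$ and that $x_\alpha$ is $o$-Cauchy, Lemma \ref{ptechlemma2} applies verbatim and yields $x_\alpha\xrightarrow{o}x$. So the first (and shortest) route is simply to invoke Lemma \ref{ptechlemma2}.

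For completeness one can instead give the direct argument, which is the same computation as in the proof of Lemma \ref{ptechlemma2}. Since $x_\alpha$ is $o$-Cauchy, fix a net $z_\gamma\downarrow 0$ in $X$ such that for each $\gamma$ there is an index $\alpha_\gamma$ with $|x_\alpha-x_\beta|\le z_\gamma$ for all $\alpha,\beta\ge\alpha_\gamma$. Fix $\gamma$ and $\alpha\ge\alpha_\gamma$; then $x_\alpha-z_\gamma\le x_\beta\le x_\alpha+z_\gamma$ for every $\beta\ge\alpha_\gamma$. Letting $\beta$ run along the net and using that $x_\beta\xrightarrow{\|\cdot\|}x$ together with the norm-closedness of the positive cone $X_+$ (this is Proposition \ref{pos. cone is $p$-closed} applied to $(X,\|\cdot\|,\mathbb{R})$), the elements $x_\alpha+z_\gamma-x_\beta\ge 0$ norm-converge to $x_\alpha+z_\gamma-x$, so $x_\alpha+z_\gamma-x\ge 0$; symmetrically $x-x_\alpha+z_\gamma\ge 0$. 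Thus $|x_\alpha-x|\le z_\gamma$ for all $\alpha\ge\alpha_\gamma$, and since $z_\gamma\downarrow 0$ this is precisely $x_\alpha\xrightarrow{o}x$.

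There is essentially no obstacle here: the statement is a direct corollary. The only step that needs a word of justification is the passage to the limit over $\beta$, which relies on the standard fact that the positive cone of a Banach lattice is norm-closed — already available in the general LNVL form as Proposition \ref{pos. cone is $p$-closed}. Accordingly, I would keep the proof to a single line pointing to Lemma \ref{ptechlemma2}, optionally noting the identification of $(X,\|\cdot\|)$ with $(X,\|\cdot\|,\mathbb{R})$.
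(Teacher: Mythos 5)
Your proposal is correct and matches the paper exactly: the paper gives no separate proof of Lemma \ref{techlemma2}, simply declaring it a particular case of Lemma \ref{ptechlemma2} via the identification of $(X,\left\|\cdot\right\|)$ with the LNVL $(X,\left\|\cdot\right\|,\mathbb{R})$, which is precisely your primary route. Your optional direct argument is likewise just the specialization of the proof of Lemma \ref{ptechlemma2} and is sound.
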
 

Recall that a Banach lattice is called $un$-complete if every $un$-Cauchy net is $un$-convergent, \cite{KMT}.

\begin{theorem}\label{up-complete}
	Let $(X,p,E)$ be an LNVL and $(E,\left\|\cdot\right\|)$ be an order continuous Banach lattice. 
	If $(X,p\text{-}\Vert\cdot\rVert)$ is a $un$-complete Banach lattice, then $X$ is $up$-complete. 
\end{theorem}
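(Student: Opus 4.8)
The plan is to use the mixed-normed Banach lattice $(X,p\text{-}\lVert\cdot\rVert)$ as a bridge between the $up$-theory of $(X,p,E)$ and the given $un$-completeness. First I would record the (easy) one-way link: since the norm of $E$ is order continuous, every $o$-null net in $E$ is norm-null, so whenever $z_\alpha\xrightarrow{up}0$ in $(X,p,E)$ one has, for every $u\in X_+$,
$$
p\text{-}\lVert\,|z_\alpha|\wedge u\,\rVert=\lVert p(|z_\alpha|\wedge u)\rVert\to0,
$$
that is, $z_\alpha\xrightarrow{un}0$ in $(X,p\text{-}\lVert\cdot\rVert)$. Applying this to the differences $x_\alpha-x_{\alpha'}$ shows that a $up$-Cauchy net in $(X,p,E)$ is $un$-Cauchy in $(X,p\text{-}\lVert\cdot\rVert)$; by $un$-completeness it then $un$-converges to some $x\in X$.

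The substantive step is to push $x_\alpha\xrightarrow{un}x$ back up to $x_\alpha\xrightarrow{up}x$ in $(X,p,E)$, and here I would exploit the hypothesis that $(x_\alpha)$ is genuinely $up$-Cauchy. Fix $u\in X_+$. Since $p(|x_\alpha-x_{\alpha'}|\wedge u)\xrightarrow{o}0$ over $A\times A$, there is a net $e_\gamma\downarrow 0$ in $E$ and, for each $\gamma$, an index $\beta_\gamma\in A$ (obtained from the two-variable tail index by directedness of $A$) with
$$
p(|x_\alpha-x_{\alpha'}|\wedge u)\le e_\gamma\qquad(\alpha,\alpha'\ge\beta_\gamma).
$$
Now fix $\gamma$ and $\alpha\ge\beta_\gamma$ and let $\alpha'$ run to infinity. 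From the elementary lattice estimate
$$
\bigl|\,|x_\alpha-x_{\alpha'}|\wedge u-|x_\alpha-x|\wedge u\,\bigr|\le|x_{\alpha'}-x|\wedge u
$$
(valid because $|a\wedge u-b\wedge u|\le|a-b|$ and the left-hand vector lies in $[-u,u]$) together with monotonicity of $p\text{-}\lVert\cdot\rVert$ and $x_{\alpha'}\xrightarrow{un}x$, I get $p\text{-}\lVert\,|x_\alpha-x_{\alpha'}|\wedge u-|x_\alpha-x|\wedge u\,\rVert\to0$; since $\lVert p(a)-p(b)\rVert\le\lVert p(a-b)\rVert=p\text{-}\lVert a-b\rVert$, this upgrades to $p(|x_\alpha-x_{\alpha'}|\wedge u)\to p(|x_\alpha-x|\wedge u)$ in the norm of $E$.

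The last move is to combine the uniform bound with this convergence: the order interval $\{e\in E:e\le e_\gamma\}=e_\gamma-E_+$ is norm closed, so the norm limit of vectors bounded by $e_\gamma$ is bounded by $e_\gamma$; hence $p(|x_\alpha-x|\wedge u)\le e_\gamma$ for all $\alpha\ge\beta_\gamma$ and all $\gamma$. As $e_\gamma\downarrow 0$, this means $p(|x_\alpha-x|\wedge u)\xrightarrow{o}0$ in $E$, and since $u\in X_+$ was arbitrary, $x_\alpha\xrightarrow{up}x$; thus $X$ is $up$-complete. The delicate point I expect is precisely this return trip from the norm notion to the order notion: truncating by $u$ confines all the relevant vectors to $[0,u]$, where $un$-convergence forces genuine norm convergence of the $p(\cdot)$-images, and only then does norm-closedness of order intervals in $E$ let the $p$-Cauchy bounds survive to the limit. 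It is worth noting that no $op$-continuity of $X$ is needed.
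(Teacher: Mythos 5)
Your proof is correct and follows essentially the same route as the paper: pass to the mixed-normed space $(X,p\text{-}\lVert\cdot\rVert)$, use $un$-completeness to produce the candidate limit $x$, and then upgrade $x_\alpha\xrightarrow{un}x$ to $x_\alpha\xrightarrow{up}x$ by transferring the order-Cauchy estimates through the norm limit. The paper packages only the last step slightly differently---it shows the net $p(|x_\alpha-x|\wedge u)$ is $o$-Cauchy in $E$ and norm-null and then invokes its Lemma~\ref{techlemma2}---which is the same mechanism as your direct appeal to norm-closedness of $e_\gamma-E_+$.
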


\begin{proof}
	Let $x_\alpha$ be a $up$-Cauchy net in $X$. So, for every $u\in X_+$, $p(|x_\alpha-x_\beta|\wedge u)\xrightarrow{o}0$. 
	Since $E$ is order continuous, then, for every $u\in X_+$, $\lVert p(|x_\alpha-x_\beta|\wedge u)\rVert\to 0$ 
	or for every $u\in X_+$, $p\text{-}\big\lVert |x_\alpha-x_\beta|\wedge u\big\rVert\to 0$, i.e. $x_\alpha$ is $un$-Cauchy in $(X, p\text{-}\lVert \cdot\rVert)$. 
	Since $(X,p\text{-}\lVert \cdot\rVert)$ is $un$-complete, then there exists $x\in X$ such that $x_\alpha\xrightarrow{un}x$ in $(X, p\text{-}\lVert \cdot\rVert)$. 
	That is, for every $u\in X_+$, $\left\|p(|x_\alpha-x|\wedge u)\right\|\to 0$. Next we show the net $\big(p(\lvert x_\alpha-x\rvert\wedge u)\big)_\alpha$ is order Cauchy in $E$. Indeed,
	$$\big\lvert p(\lvert x_\alpha-x\rvert\wedge u) - p(\lvert x_\beta-x\rvert\wedge u)\big\rvert \leq p\big(\big\lvert \lvert x_\alpha-x\rvert\wedge u - \lvert x_\beta-x\rvert\wedge u \big\rvert\big)  \leq p(\lvert x_\alpha-x_\beta\rvert\wedge u)\xrightarrow{o}0.$$ Now, Lemma \ref{techlemma2} above, implies that $p(\lvert x_\alpha-x\rvert\wedge u)\xrightarrow{o}0$.
\end{proof}

\subsection{$up$-Null nets and $up$-null sequences in mixed-normed spaces}
The following theorem is a $p$-version of \cite[Thm.3.2]{DOT} and a generalization of \cite[Lm.6.7]{GTX}, as we take $(X,p,E)=(X,\left\|\cdot\right\|,\mathbb R)$.

\begin{theorem}\label{$p$-version of Theorem 3.2 in DOT}
	Let $(X,p,E)$ be an $op$-continuous and $p$-complete LNVL, $E$ an order continuous Banach lattice, and $X\ni x_\alpha\xrightarrow{up}0$.
	Then there exist an increasing sequence $\alpha_k$ of indices and a disjoint sequence $d_k\in X$ such that $(x_{\alpha_k}-d_k)\xrightarrow{p}0$ as $k\to\infty$.
\end{theorem}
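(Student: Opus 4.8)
The plan is to mimic the classical argument for the Banach-lattice case (which is \cite[Thm.3.2]{DOT}, itself built on the disjointification technique of \cite[Lm.6.7]{GTX}), passing through the mixed-normed space $(X,p\text{-}\lVert\cdot\rVert)$ so that we may actually borrow the conclusion of \cite[Thm.3.2]{DOT} rather than reprove it from scratch. First I would observe that, by Proposition \ref{mixed}(i), the hypotheses ``$(X,p,E)$ is $op$-continuous'' and ``$E$ is order continuous'' make $(X,p\text{-}\lVert\cdot\rVert)$ an order continuous normed lattice; and by Corollary \ref{op + p implies o} the $op$-continuity together with $p$-completeness forces $X$ to be order complete. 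One should also check that $(X,p\text{-}\lVert\cdot\rVert)$ is norm complete, i.e. a Banach lattice: this follows since $E$ is a Banach lattice and $X$ is $p$-complete (a $p$-Cauchy net is $p\text{-}\lVert\cdot\rVert$-Cauchy by order continuity of the norm on $E$, and its $p$-limit is its norm-limit), so the mixed norm is complete.

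Next I would translate the hypothesis $x_\alpha\xrightarrow{up}0$ in $(X,p,E)$ into $un$-convergence in $(X,p\text{-}\lVert\cdot\rVert)$: for each $u\in X_+$, $p(|x_\alpha|\wedge u)\xrightarrow{o}0$ in $E$, hence (order continuity of $\lVert\cdot\rVert$) $\lVert p(|x_\alpha|\wedge u)\rVert\to0$, i.e. $p\text{-}\lVert\,|x_\alpha|\wedge u\,\rVert\to0$, which is precisely $x_\alpha\xrightarrow{un}0$ in the mixed-normed Banach lattice. Now \cite[Thm.3.2]{DOT} applies to $(X,p\text{-}\lVert\cdot\rVert)$ — an order continuous Banach lattice — and yields an increasing sequence of indices $\alpha_k$ and a disjoint sequence $d_k\in X$ with $\lVert x_{\alpha_k}-d_k\rVert_{p\text{-}}\to0$, that is, $\lVert p(x_{\alpha_k}-d_k)\rVert\to0$ in $E$.

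The remaining task is to upgrade this norm-null statement to $p$-convergence, i.e. to show $p(x_{\alpha_k}-d_k)\xrightarrow{o}0$ in $E$. A sequence with $\lVert\cdot\rVert$-null does not in general order-converge, so here one inspects the disjointification construction more closely: in the proof of \cite[Thm.3.2]{DOT} the $d_k$ are produced so that $p(x_{\alpha_k}-d_k)$ is dominated by a tail expression of the form $\sum_{j\ge k}$ (of pieces of $p(x_{\alpha_j})\wedge$ something), and in an order continuous Banach lattice a norm-convergent series has order-convergent tails; thus the dominating sequence decreases to $0$ in order, giving $p(x_{\alpha_k}-d_k)\xrightarrow{o}0$. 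Concretely, one chooses the $\alpha_k$ along a fast-decreasing subsequence so that $\sum_k\lVert p(x_{\alpha_k}-d_k)\rVert<\infty$; then $e_k:=\sup_{j\ge k}p(x_{\alpha_j}-d_j)$ exists by order completeness of $E$ (or of $X$ via $p$), $e_k\downarrow$, and $\lVert e_k\rVert\le\sum_{j\ge k}\lVert p(x_{\alpha_j}-d_j)\rVert\to0$, whence $e_k\downarrow0$ by order continuity and $p(x_{\alpha_k}-d_k)\le e_k\downarrow0$. I expect \textbf{the main obstacle} to be this last passage from norm to order convergence of the residuals: it requires either re-running the disjointification with quantitative control on $\sum\lVert p(x_{\alpha_k}-d_k)\rVert$, or a lemma asserting that in an order continuous Banach lattice a norm-summable sequence is order-null — both are standard but must be invoked carefully, and one must make sure the disjoint sequence $d_k$ can be chosen \emph{simultaneously} disjoint and with summably small $p$-residuals, which is exactly what the careful version of the \cite{DOT} argument delivers.
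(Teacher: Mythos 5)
Your proposal is correct and follows essentially the same route as the paper: pass to the mixed-normed Banach lattice $(X,p\text{-}\lVert\cdot\rVert)$, translate $up$-convergence into $un$-convergence, invoke \cite[Thm.3.2]{DOT}, and then pass to a further subsequence to upgrade the norm-null residuals $p(x_{\alpha_k}-d_k)$ to order-null ones --- the paper does this last step by quoting \cite[Thm.VII.2.1]{V} (a norm-null sequence in a Banach lattice has an order-convergent subsequence, applied in $X$) followed by $op$-continuity, whereas you run the equivalent summable-subsequence argument directly in $E$, which is the same standard trick. One small correction: your parenthetical justification that the mixed norm is complete is stated in the wrong direction (one must show that a $p\text{-}\lVert\cdot\rVert$-Cauchy sequence converges, which again goes via a summable subsequence that is then $p$-Cauchy and hence $p$-convergent), but the fact itself is exactly \cite[7.1.3 (1), p.294]{K}, which the paper simply cites.
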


\begin{proof}
	Consider the mixed norm $p\text{-}\lVert x \rVert=\lVert p(x)\rVert$. Since $p(|x_\alpha|\wedge u)\xrightarrow{o}0$ for all $u\in X_+$, 
	then $p\text{-}\lVert|x_\alpha|\wedge u\rVert=\lVert p(|x_\alpha|\wedge u)\rVert \xrightarrow{o}0$ that means 
	$x_\alpha \xrightarrow{un}0$ in $(X,p\text{-}\lVert\cdot\rVert)$ by  $o$-continuity of $(E,\lVert\cdot\rVert)$.
	
	By \cite[Thm.3.2]{DOT}, there exists an increasing sequence $\alpha_n$ of indices and a disjoint sequence $d_n$ in $X$ such that 
	$ x_{\alpha_n}-d_n \xrightarrow{p\text{-}\lVert\cdot\rVert} 0$. Now $(X,p\text{-}\lVert\cdot\rVert)$ is a Banach lattice by \cite[7.1.3 (1), p.294]{K}. 
	So, by \cite[Thm.VII.2.1]{V} there is a further subsequence $(\alpha_{n_k})$ such that $\lvert x_{\alpha_{n_k}}-d_{n_k}\rvert\xrightarrow{o}0$ in $X$.  
	By $op$-continuity of $X$, $p(x_{\alpha_{n_k}}-d_{n_k})\xrightarrow{o}0$. 
\end{proof}

The next corollary is a $p$-version of \cite[Cor.3.5]{DOT}.

\begin{corollary}
	Let $(X,p,E)$ be an $op$-continuous LNVL, $E$ be an order continuous Banach lattice, and $X\ni x_\alpha\xrightarrow{up}0$.
	Then there exist an increasing sequence $\alpha_k$ of indices such that $x_{\alpha_k}\xrightarrow{up}0$.
\end{corollary}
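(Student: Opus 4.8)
The plan is to reduce the statement to the unbounded-norm case by passing to the mixed norm associated with $(X,p,E)$, apply the known disjoint-decomposition result there, and transfer the conclusion back. Set $p\text{-}\lVert x\rVert=\lVert p(x)\rVert$; by Proposition \ref{mixed}$(i)$, $(X,p\text{-}\lVert\cdot\rVert)$ is an order continuous normed lattice. Since $x_\alpha\xrightarrow{up}0$ means $p(|x_\alpha|\wedge u)\xrightarrow{o}0$ in $E$ for every $u\in X_+$ and $E$ is order continuous, we get $p\text{-}\lVert\,|x_\alpha|\wedge u\,\rVert=\lVert p(|x_\alpha|\wedge u)\rVert\to 0$, that is, $x_\alpha\xrightarrow{un}0$ in $(X,p\text{-}\lVert\cdot\rVert)$.

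Next I would run the argument of \cite[Thm.3.2]{DOT} (exactly as in the proof of Theorem \ref{$p$-version of Theorem 3.2 in DOT}) on the $un$-null net $x_\alpha$ to obtain an increasing sequence $\alpha_k$ of indices and a disjoint sequence $d_k$ in $X$ with $\lVert p(x_{\alpha_k}-d_k)\rVert\to 0$ in $E$. Since $E$ is a Banach lattice, \cite[Thm.VII.2.1]{V} applied in $E$ lets us pass to a further subsequence along which $p(x_{\alpha_k}-d_k)\xrightarrow{o}0$; thus $x_{\alpha_k}-d_k\xrightarrow{p}0$, and hence $x_{\alpha_k}-d_k\xrightarrow{up}0$ in $(X,p,E)$. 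On the other hand, a disjoint sequence in a vector lattice is $uo$-null, so $d_k\xrightarrow{uo}0$ in $X$, and $op$-continuity of $(X,p,E)$ then yields $d_k\xrightarrow{up}0$. Adding these, $x_{\alpha_k}=(x_{\alpha_k}-d_k)+d_k\xrightarrow{up}0$, which is the assertion (after relabelling the subsequence).

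The only genuine obstacle is that $p$-completeness is \emph{not} assumed here, so $(X,p\text{-}\lVert\cdot\rVert)$ need not be a Banach lattice and \cite[Thm.3.2]{DOT} does not apply on the nose. I would circumvent this by performing the extraction inside the norm completion $\widehat{X}$ of $(X,p\text{-}\lVert\cdot\rVert)$, which is an order continuous Banach lattice containing $X$ as a sublattice: one then has to check that $x_\alpha$ remains $un$-null in $\widehat{X}$ and that the disjoint sequence produced in $\widehat{X}$ can be replaced, up to a norm-null perturbation, by a disjoint sequence lying in $X$ — which is all the recombination step uses. Alternatively, one can verify that the exhaustion construction underlying \cite[Thm.3.2]{DOT} uses only order continuity of the norm and not completeness, in which case the argument above applies verbatim already in $(X,p\text{-}\lVert\cdot\rVert)$. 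Everything else — the passage between $o$- and $un$-convergence via order continuity of $E$, and the $uo$-nullity of disjoint sequences — is routine.
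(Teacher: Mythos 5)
Your proof is correct and follows essentially the same route as the paper: the paper simply invokes its Theorem \ref{$p$-version of Theorem 3.2 in DOT} to produce the increasing sequence $\alpha_k$ and the disjoint sequence $d_k$ with $x_{\alpha_k}-d_k\xrightarrow{p}0$, and then concludes exactly as you do, via $d_k\xrightarrow{uo}0$ from disjointness, $op$-continuity to upgrade this to $d_k\xrightarrow{up}0$, and the inequality $p(|x_{\alpha_k}-d_k|\wedge u)\le p(x_{\alpha_k}-d_k)$. Your concern about the missing $p$-completeness hypothesis is well founded but is not a defect of your argument relative to the paper's: the paper's own proof cites that theorem, whose statement does assume $p$-completeness, so the corollary as printed implicitly needs that hypothesis (or a workaround of the kind you sketch) just as much as your version does.
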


\begin{proof}
	Let $\alpha_k$ and $d_k$ be as in Theorem \ref{$p$-version of Theorem 3.2 in DOT}. 
	Since the sequence $d_k$ is disjoint, then $d_k\xrightarrow{uo}0$ by \cite[Cor.3.6.]{GTX}. 
	Since $X$ is $op$-continuous, then $d_k\xrightarrow{up}0$. Since
	$$
	p(|x_{\alpha_k}-d_k|\wedge u)\le p(x_{\alpha_k}-d_k)\xrightarrow{o}0 \ \ \ (\forall u\in X_+),
	$$
	then $x_{\alpha_k}-d_k\xrightarrow{up}0$. Since $d_k\xrightarrow{up}0$, then $x_{\alpha_k}\xrightarrow{up}0$.
\end{proof}

Next proposition extends \cite[Prop.4.1]{DOT} to LNVLs.

\begin{proposition}\label{16}
	Let $(X,p,E)$ be a $p$-complete LNVL, $(E,\lVert\cdot \rVert)$ be an order continuous Banach lattice, and $X\ni x_n\xrightarrow{up}0$. 
	Then there exist a subsequence $x_{n_k}$ of $x_n$ such that $x_{n_k}\xrightarrow{uo}0$ as $k\to\infty$.
\end{proposition}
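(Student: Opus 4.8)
The plan is to reduce the statement to the Banach‑lattice setting by passing to the mixed norm, and then to quote \cite[Prop.4.1]{DOT}. First I would introduce the mixed norm $p\text{-}\lVert x\rVert:=\lVert p(x)\rVert$ on $X$. Since $(X,p,E)$ is an LNVL and $(E,\lVert\cdot\rVert)$ is a Banach lattice, $p\text{-}\lVert\cdot\rVert$ is a genuine lattice norm on $X$, and because $(X,p,E)$ is $p$-complete and $E$ is norm complete, $(X,p\text{-}\lVert\cdot\rVert)$ is a Banach lattice by \cite[7.1.3 (1), p.294]{K}; crucially, its underlying vector lattice is exactly $X$.

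Next I would check that $x_n\xrightarrow{un}0$ in $(X,p\text{-}\lVert\cdot\rVert)$. By hypothesis $p(|x_n|\wedge u)\xrightarrow{o}0$ in $E$ for every $u\in X_+$, and since $\lVert\cdot\rVert$ is order continuous this forces $\lVert p(|x_n|\wedge u)\rVert\to 0$, i.e. $p\text{-}\lVert |x_n|\wedge u\rVert\to 0$ for each $u\in X_+$ --- precisely the $un$-nullity of $(x_n)$ in the mixed-normed Banach lattice. Applying \cite[Prop.4.1]{DOT} to $(X,p\text{-}\lVert\cdot\rVert)$ yields a subsequence $(x_{n_k})$ with $x_{n_k}\xrightarrow{uo}0$ in $(X,p\text{-}\lVert\cdot\rVert)$. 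Since $uo$-convergence refers only to the order structure of the vector lattice and not to any norm, this is the same as $x_{n_k}\xrightarrow{uo}0$ in $X$, which is the assertion. If one prefers not to invoke \cite[Prop.4.1]{DOT} directly, one can instead apply \cite[Thm.3.2]{DOT} inside $(X,p\text{-}\lVert\cdot\rVert)$ to obtain increasing indices $\alpha_k$ and a disjoint sequence $d_k\in X$ with $x_{\alpha_k}-d_k\to0$ in the mixed norm; then pass to a further subsequence along which $|x_{\alpha_{k_j}}-d_{k_j}|\xrightarrow{o}0$ in $X$ by \cite[Thm.VII.2.1]{V}, so $x_{\alpha_{k_j}}-d_{k_j}\xrightarrow{uo}0$; as $d_k$ is disjoint, $d_{k_j}\xrightarrow{uo}0$ by \cite[Cor.3.6]{GTX}, whence $x_{\alpha_{k_j}}\xrightarrow{uo}0$.

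The step to watch is that $op$-continuity of $X$ is \emph{not} among the hypotheses, so Theorem \ref{$p$-version of Theorem 3.2 in DOT} cannot be applied directly; the whole argument must be carried out inside the mixed-normed Banach lattice $(X,p\text{-}\lVert\cdot\rVert)$, and only the final, purely order-theoretic conclusion is transported back to the LNVL $X$. The other thing to verify --- but it is routine --- is that $p$-completeness of $(X,p,E)$ together with completeness of $E$ really does make $(X,p\text{-}\lVert\cdot\rVert)$ a Banach lattice, which is \cite[7.1.3 (1), p.294]{K}. Beyond these bookkeeping points I do not expect any serious obstacle.
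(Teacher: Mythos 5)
Your proposal is correct and follows essentially the same route as the paper: pass to the mixed-normed Banach lattice $(X,p\text{-}\lVert\cdot\rVert)$, use order continuity of $\lVert\cdot\rVert$ to convert $up$-nullity into $un$-nullity there, and then invoke \cite[Prop.4.1]{DOT}, noting that $uo$-convergence depends only on the order structure. Your observation that $op$-continuity is not assumed (so the argument must stay inside the mixed-normed lattice rather than going through Theorem \ref{$p$-version of Theorem 3.2 in DOT}) is a correct and worthwhile point of care.
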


\begin{proof}
	Suppose $x_n\xrightarrow{up}0$, then, for all $u\in X_+$ $p(|x_n|\wedge u)\xrightarrow{o}0$, and so $\|p(|x_n|\wedge u)\|\to 0$ since $E$ is order continuous. 
	Thus $|x_n|\wedge u\xrightarrow{p\text{-}\lVert\cdot\rVert}0$, i.e. $x_n\xrightarrow{un}0$ in $(X,p\text{-}\lVert\cdot\rVert)$. It follows from \cite[7.1.2, p.293]{K} that the mixed-normed space 
	$(X,p\text{-}\lVert\cdot\rVert)$ is a Banach lattice, and so by \cite[Prop.4.1]{DOT} there is a subsequence $x_{n_k}$ of $x_n$ such that $x_{n_k}\xrightarrow{uo}0$ as $k\to\infty$.
\end{proof}

Next result is a $p$-version of \cite[Thm.4.4]{DOT}.

\begin{proposition}
	Let $(X,p,E)$ be an $op$-continuous and $p$-complete LNVL such that $(E,\lVert \cdot \rVert)$ is an order continuous atomic Banach lattice. 
	Then a sequence in $X$ is $up$-null iff every subsequence has a further subsequence which $uo$-converges to zero.
\end{proposition}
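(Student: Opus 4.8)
The plan is to handle the two implications separately; the forward one is essentially immediate from Proposition~\ref{16}, while the converse is where atomicity of $E$ does the work.

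For the ``only if'' part, suppose $x_n\xrightarrow{up}0$ and let $(x_{n_j})_j$ be an arbitrary subsequence. Then $(x_{n_j})_j$ is again $up$-null, so, since $X$ is $p$-complete and $(E,\lVert\cdot\rVert)$ is order continuous, Proposition~\ref{16} applies to it and yields a further subsequence $(x_{n_{j_k}})_k$ with $x_{n_{j_k}}\xrightarrow{uo}0$. Neither $op$-continuity nor atomicity is needed here.

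For the ``if'' part, assume that every subsequence of $(x_n)$ has a further subsequence which is $uo$-null in $X$. Since $X$ is $op$-continuous, $uo$-convergence implies $up$-convergence (Section~3), so in fact every subsequence of $(x_n)$ has a further subsequence that is $up$-null. Fix $u\in X_+$ and put $g_n:=p(|x_n|\wedge u)\in E_+$. By monotonicity of the lattice norm, $g_n\le p(u)$ for all $n$, so $(g_n)_n$ is order bounded in $E$; moreover, reading the subsequence condition through the definition of $up$-convergence, every subsequence of $(g_n)_n$ has a further subsequence that $o$-converges to $0$ in $E$. It therefore remains to upgrade this to $g_n\xrightarrow{o}0$, since — $u\in X_+$ being arbitrary — that gives $x_n\xrightarrow{up}0$.

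To do so I would use that, in an atomic order continuous Banach lattice $E$ (which is then automatically order complete), order convergence of an order bounded sequence to $0$ is equivalent to atomwise convergence to $0$: for an atom $a$ let $f_a$ be its biorthogonal functional, so that the band projection onto the line $\mathbb{R}a$ is $f_a(\cdot)\,a$ and hence $f_a$ is an order continuous lattice homomorphism; then, writing $h_N:=\sup_{m\ge N}g_m$ (a decreasing sequence in $[0,p(u)]$), one has $f_a(\inf_N h_N)=\inf_N\sup_{m\ge N}f_a(g_m)=\limsup_n f_a(g_n)$ for every $a$, and since $E$ is atomic the vanishing of all these limsups forces $\inf_N h_N=0$, i.e. $g_n\xrightarrow{o}0$. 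Atomwise convergence to $0$ is Hausdorff and topological, so it obeys the subsequence principle; applying the latter to each scalar sequence $(f_a(g_n))_n$, the hypothesis that every subsequence of $(g_n)$ has a further subsequence along which $f_a(g_n)\to 0$ for all $a$ simultaneously yields $f_a(g_n)\to 0$ for every $a$, whence $g_n\xrightarrow{o}0$. This completes the converse.

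I expect the atomic lemma of the last paragraph — reducing order convergence of order bounded sequences to atomwise convergence — to be the point that must be gotten right, though it is essentially a routine computation. The order boundedness of $(g_n)$, supplied for free by the monotonicity of $p$, is crucial: without it, order convergence badly violates the subsequence principle, and the typewriter sequence in $L_1[0,1]$ shows that atomicity of $E$ cannot be dispensed with either. It is also worth emphasising that the correct hypothesis is atomicity of the norming lattice $E$, not of $X$ nor of the mixed-normed lattice $(X,p\text{-}\lVert\cdot\rVert)$, which need not be atomic even when $E$ is.
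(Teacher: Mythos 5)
Your proof is correct and follows essentially the same route as the paper's: the forward implication is obtained from Proposition~\ref{16} in both cases, and the converse rests in both cases on the fact that order convergence of the order bounded sequences $p(|x_n|\wedge u)$ in the atomic order continuous lattice $E$ is detected by the biorthogonal functionals $f_a$. The only difference is organizational --- the paper runs the converse by contradiction, extracting an atom $a$ and a subsequence with $f_a(p(|x_{n_k}|\wedge u))\ge\varepsilon_0$, while you argue directly via the subsequence principle for the scalar sequences $f_a(g_n)$; the underlying atomic lemma, which you spell out and the paper leaves implicit, is the same.
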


\begin{proof}
	The forward implication follows from Proposition \ref{16}. Conversely, let $x_n$ be a sequence in $X$ and assume that 
	$x_n \not \xrightarrow {up}0$. Then there is an atom $a\in E_+$, $u \in X_+$, $\varepsilon_0 >0$ and a subsequence $x_{n_k}$ of $x_n$ satisfying 
	$f_a\big(p(|x_{n_k}| \wedge u)\big) \geq \varepsilon_0$ for all $k$. By the hypothesis there exist a further subsequence $x_{n_{k_j}}$ of 
	$x_{n_k}$ which $uo$-converges to zero. By the $op$-continuity of $X$ we get $p(|x_{n_{k_j}}| \wedge u) \xrightarrow {o}0$, 
	and so $f_a\big(p(|x_{n_{k_j}}| \wedge u)\big) \rightarrow 0$, which is a contradiction.
\end{proof}

Our last result is a $p$-version of \cite[Lm.5.1]{DOT}.

\begin{proposition}
	Let $(X,p,E)$ be an $op$-continuous $p$-complete LNVL and $(E,\lVert\cdot\rVert)$ be an order continuous Banach lattice. If $X$ is atomic and $x_n$ is an order bounded sequence such that $x_n\xrightarrow{p}0$ in $X$, 
	then $x_n\xrightarrow{o}0$.
\end{proposition}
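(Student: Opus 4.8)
The plan is to reduce the statement to its classical counterpart \cite[Lm.5.1]{DOT} by passing to the mixed-normed space $(X,p\text{-}\lVert\cdot\rVert)$ attached to $(X,p,E)$ via the Banach lattice $(E,\lVert\cdot\rVert)$.

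First I would translate the hypothesis $x_n\xrightarrow{p}0$: by definition it says $p(x_n)\xrightarrow{o}0$ in $E$, and since $(E,\lVert\cdot\rVert)$ is order continuous this forces $\lVert p(x_n)\rVert\to 0$, that is, $p\text{-}\lVert x_n\rVert\to 0$. So $x_n$ is a norm-null sequence in $(X,p\text{-}\lVert\cdot\rVert)$, and it remains order bounded, since order boundedness is a purely lattice-theoretic notion unaffected by the choice of norm.

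Next I would record the relevant structure of the mixed-normed space. It is complete, hence a Banach lattice, by \cite[7.1.2, p.293]{K} (this uses the $p$-completeness of $X$ together with $E$ being a Banach lattice, exactly as in the proof of Proposition \ref{16}). It is order continuous by Proposition \ref{mixed}$(i)$, since $X$ is $op$-continuous and $E$ is order continuous. Finally it is atomic, because the vector lattice $X$ is atomic and the order structure of $X$ is unchanged by equipping $X$ with the mixed norm.

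It then remains to invoke \cite[Lm.5.1]{DOT}: in the order continuous atomic Banach lattice $(X,p\text{-}\lVert\cdot\rVert)$, the order bounded, norm-null sequence $x_n$ must satisfy $x_n\xrightarrow{o}0$. Since order convergence does not depend on the choice of norm, this is precisely $x_n\xrightarrow{o}0$ in $X$, which is the desired conclusion. I do not expect a substantial obstacle here; the only points that deserve attention are verifying that each hypothesis of \cite[Lm.5.1]{DOT} — order continuity, atomicity, and completeness — genuinely transfers to the mixed-normed space, which is routine once Proposition \ref{mixed} and the cited facts from \cite{K} are in hand.
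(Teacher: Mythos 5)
Your proposal is correct and follows essentially the same route as the paper: pass to the mixed-normed space $(X,p\text{-}\lVert\cdot\rVert)$, verify it is an atomic order continuous Banach lattice in which $x_n$ is norm-null and order bounded, and apply \cite[Lm.5.1]{DOT}. The paper states this in a single sentence; your version merely spells out the routine verifications (norm-nullity via order continuity of $E$, completeness via $p$-completeness, atomicity being order-theoretic), all of which are accurate.
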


\begin{proof}
	The mixed-normed space $(X,p\text{-}\lVert\cdot\rVert)$ is an atomic order continuous Banach lattice such that $x_n\xrightarrow{p\text{-}\lVert\cdot\rVert}0$, and so $x_n\xrightarrow{o} 0$ by \cite[Lm.5.1]{DOT}.
\end{proof}

\bibliographystyle{amsplain}

\end{document}